\documentclass[11pt,reqno]{amsart}
\usepackage{amsmath, amssymb, amsthm}
\usepackage{url}
\usepackage{mathrsfs}
\usepackage[ansinew]{inputenc}
\usepackage[breaklinks]{hyperref}
\usepackage{fancyhdr}

\usepackage{caption}

\usepackage{multirow}

\allowdisplaybreaks

\newcommand{\pFq}[5]{\ensuremath{{}_{#1}{}\phi_{#2} \left( \genfrac{}{}{0pt}{}{#3}
{#4} ; {#5} \right)}}

\renewcommand{\(}{\left\(}
\renewcommand{\)}{\right\)}
\renewcommand{\[}{\left\[}
\renewcommand{\]}{\right\]}
\newtheorem{remark}[]{Remark}
\numberwithin{equation}{section}
 \theoremstyle{plain}
\newtheorem{theorem}{Theorem}[section]
\newtheorem{lemma}[theorem]{Lemma}
\newcommand{\sm}{\left(\begin{smallmatrix}}
\newcommand{\esm}{\end{smallmatrix}\right)}

\newtheorem{corollary}[theorem]{Corollary}
\newtheorem{proposition}[theorem]{Proposition}

   \makeatletter
\def\proof{\@ifnextchar[{\@oproof}{\@nproof}}
\def\@oproof[#1][#2]{\trivlist\item[\hskip\labelsep\textit{#2 \textbf{Proof of}\
#1.}~]\ignorespaces}
\def\@nproof{\trivlist\item[\hskip\labelsep\textit{Proof.}~]\ignorespaces}

\makeatother

\usepackage{color}
\usepackage{amsmath}

\definecolor{blue}{rgb}{0,0,1}
\definecolor{red}{rgb}{1,0,0}
\definecolor{green}{rgb}{0,.6,.2}
\definecolor{purple}{rgb}{1,0,1}

\long\def\red#1\endred{{\color{red}#1}}
\long\def\blue#1\endblue{{\color{blue}#1}}
\long\def\purple#1\endpurple{{\color{purple}#1}}
\long\def\green#1\endgreen{{\color{green}#1}}

\begin{document}
\title[partitions with parts separated by parity]{
	 Untrodden pathways in the study of partitions with parts separated by parity}

\author{Nargish Punia}
\address{Department of Mathematics, Indian Institute of Technology, Roorkee-247667, Uttarakhand, India}
\email{nargish@ma.iitr.ac.in}

\author{Rahul Kumar}
\address{Department of Mathematics, Indian Institute of Technology, Roorkee-247667, Uttarakhand, India}
\email{rahul.kumar@ma.iitr.ac.in}

\subjclass[2020]{Primary 11P81, 11F03, 05A17, 33D15}
  \keywords{Partitions, Parity, Mock theta
functions, Asymptotics, Odd Ferrers graph, Difference of partitions}
\maketitle
\pagenumbering{arabic}
\pagestyle{headings}

\begin{abstract}
In this paper, we study a partition function with parts separated by parity from a family of partition functions introduced by Andrews in 2019. We first show that its generating function admit representation in terms of mock modular forms of weight $3/2$. We then initiate a study of smallest-part partition function analogues of partitions with parts separated by parity, which, to the best of our knowledge, has not previously been investigated in the literature. We show that the generating functions of these smallest-part partition functions are likewise expressible in terms of mock modular forms, mock Maass theta functions and Zagier's $q$-series $H(q)$, which he describes as a ``half-derivative of the Dedekind eta-function".  Our study also leads to new combinatorial interpretations of the coefficients of the aforementioned mock modular form,  function $\sigma^*(q)$, and mock theta functions $\psi(q)$ and $A(q)$ in terms of odd Ferrers graphs. Moreover, we provide a quantum $q$-series identity involving Ramanujan's famous $\sigma(q)$ function. We also derive the leading asymptotic term of one of these partition functions, whose generating function we first express in terms of $\sigma^*(q)$.

\end{abstract}
\tableofcontents

\section{Introduction}\label{intro}

A partition is said to have its parts \emph{separated by parity} if either all of its even parts are larger than all of its odd parts, or vice versa. The study of such integer partitions began with Andrews' influential paper \cite{andrews parity1}. In that paper, he introduced the partition function $\mathcal{EO}(n)$, which counts the number of partitions of $n$ in which every even part is smaller than each odd part. Moreover, let $\overline{\mathcal{EO}}(n)$ denote the number of partitions counted by $\mathcal{EO}(n)$ in which only the largest even part appears an odd number of times (we allow partitions with no
even parts where we tacitly assume 0 is the largest even part and appears once). Then Andrews proved that the generating function of this partition function is related with the third order mock theta function of Ramanujan \cite[p.~62]{watson}
\begin{align}\label{nuq}
\nu(q):=\sum_{n=0}^\infty\frac{q^{n^2+n}}{(-q;q^2)_{n+1}},
\end{align}
through the following relation \cite[Theorem 1.1]{andrews parity1}:
\begin{align}
\nu(q)+\nu(-q)=2\overline{\mathrm{eo}}(q),\nonumber
\end{align}
where \cite[Section 3]{andrews parity1}
\begin{align*}
\overline{\mathrm{eo}}(q):=\sum_{n=0}^\infty\overline{\mathcal{EO}}(n)q^n=\sum_{n=0}^{\infty}\frac{q^{2n}}{(q^4;q^4)_n(q^{4n+2};q^4)_\infty}.
\end{align*}
Here, and throughout the sequel, we use the following standard notations:
\begin{align*}
	(A;q)_0&=1,\\
	(A;q)_N&=(1-A)(1-Aq)\cdots(1-Aq^{N-1}),\qquad 1\leq N<\infty,\\
	(A;q)_\infty&=\lim_{N\to\infty}(A;q)_N,\quad |q|<1.
\end{align*}

One may then consider generating functions that enumerate such partitions with  subject to additional restrictions. For instance, using the notation introduced by Andrews \cite{andrews parity2}, $p_{ed}^{ou}(n)$ denotes the number of partitions of $n$ in which every odd part exceeds each even part, the even parts are distinct, and the odd parts are unrestricted. Numerous partition functions of this type can be defined by imposing different parity conditions, and their generating functions have been studied extensively (\cite{andrews parity2, bcn, bjs, Fu-Tang} to name a few). All the eight partition functions arising from Andrews' notation exhibit remarkable connections with modular forms, mock modular objects, and false-indefinite theta functions.  


Despite the rich theory surrounding $\overline{\mathcal{EO}}(n)$, comparatively little is known about its analogues corresponding to the other aforementioned partition classes. Apart from the work of Andrews \cite{andrews parity1} and  Fu and Tang \cite{Fu-Tang}, such partition functions have received very little attention. 

The main objective of this paper is twofold. First, we undertake a study of the analogue of Andrews' partition function $\overline{\mathcal{EO}}(n)$ for its mirror partition class and show that it possesses a remarkably rich theory and  leading to several surprising results.  Among other results, we prove that its generating function can be expressed in terms of a mock modular form and provide a natural combinatorial interpretation of the coefficients of this mock modular form via odd  Ferrers graphs of Andrews. Further, this study yields  several new representations of $\sigma^*(q)$, the companion of Ramanujan's celebrated function $\sigma(q)$. These representations lead to a new combinatorial interpretation of the coefficients of $\sigma^*(q)$, as well as a quantum $q$-series identity for $\sigma(q)$.

The second objective of this paper is to initiate a study of smallest-part analogues of partitions with parts separated by parity which has not previously been investigated in the literature. We show that the generating functions of these smallest-part partition functions are likewise expressible in terms of mock modular forms, mock Maass theta functions. See Section \ref{spt functions} below.

We begin with the first part of our investigation by introducing the necessary notations. We consider the Andrews' partition function\footnote{Note the difference in the order of $\mathcal{E}$ and $\mathcal{O}$ in the notation.} $\mathcal{OE}(n)$, which counts the partitions of $n$ in which every odd part is smaller than each even part. Then, we have \cite{andrews parity2}
\begin{align*}
\mathrm{oe}(q):=\sum_{n=1}^\infty \mathcal{OE}(n)q^n&=\sum_{n=1}^\infty\frac{q^{2n-1}}{(q;q^2)_{n}(q^{2n};q^2)_\infty}=\frac{1}{1-q}\left(\frac{1}{(q;q^2)_\infty}-\frac{1}{(q^2;q^2)_\infty}\right).
\end{align*}
Now, let $\overline{\mathcal{OE}}(n)$ denote the number of partitions counted by $\mathcal{OE}(n)$ in which only largest odd part appear odd number of times. Its generating function can be given as
\begin{align}\label{oe bar generating}
\overline{\mathrm{oe}}(q):=\sum_{n=1}^\infty \overline{\mathcal{OE}}(n)q^n=\sum_{n=1}^{\infty}\frac{q^{2n-1}}{(q^2;q^4)_n(q^{4n};q^4)_\infty}.
\end{align}

Among the many surprises of this study is our first result which shows that the generating function $\overline{oe}(q)$ can be expressed in terms of a mock modular form of weight $3/2$. Recall that the holomorphic part of a harmonic Maass form is known as a \emph{mock modular form}. Note that  a harmonic Maass form is a function that satisfies modularity, is annihilated by the weight $k$ hyperbolic Laplacian,	and satisfies a growth condition at the cusps (see \cite{bfor} for more details). 

\begin{theorem}\label{oebar thm}
The $q$-series $\overline{oe}(q)$ is a mixed mock modular form\footnote{Recall that a mixed mock modular form is a linear combination of products of modular forms and mock modular forms.}. More precisely, we have,
\begin{align}
\overline{\mathrm{oe}}(q)=\frac{1}{q(q^4;q^4)_\infty}\mathcal{U}(q^2).\nonumber
\end{align}
where,
\begin{align}\label{u defn}
\mathcal{U}(q)=\sum_{n=1}^{\infty}\frac{(-1)^{n+1}q^{n^2}}{(1-q^{2n-1})(q;q^2)_{n}}.
\end{align} 
\end{theorem}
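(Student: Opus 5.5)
The plan is to remove the infinite products from \eqref{oe bar generating} and reduce the theorem to a one-parameter identity that is a special case of a classical transformation of Fine's basic hypergeometric function.

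\emph{Step 1 (normalisation).} Since $(q^{4n};q^4)_\infty=(q^4;q^4)_\infty/(q^4;q^4)_{n-1}$, \eqref{oe bar generating} becomes
\begin{align*}
\overline{\mathrm{oe}}(q)=\frac{1}{q(q^4;q^4)_\infty}\sum_{n=1}^\infty \frac{q^{2n}(q^4;q^4)_{n-1}}{(q^2;q^4)_n}.
\end{align*}
Putting $Q=q^2$ and shifting $n\mapsto n+1$, it therefore suffices to prove
\begin{align}\label{plan target}
\sum_{n=0}^\infty \frac{Q^{n+1}(Q^2;Q^2)_n}{(Q;Q^2)_{n+1}}=\sum_{n=0}^\infty\frac{(-1)^nQ^{(n+1)^2}}{(1-Q^{2n+1})(Q;Q^2)_{n+1}},
\end{align}
whose right-hand side is $\mathcal{U}(Q)$ after the shift $n\mapsto n-1$ in \eqref{u defn}.

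\emph{Step 2 (recognise Fine's function).} Work in base $p=Q^2$ and recall Fine's function
\begin{align*}
F(a,b;t):=\sum_{n\ge0}\frac{(ap;p)_n}{(bp;p)_n}t^n .
\end{align*}
Using $(Q;Q^2)_{n+1}=(1-Q)(Q^3;Q^2)_n$, the left side of \eqref{plan target} equals $\frac{Q}{1-Q}F(1,Q;Q)$ with base $p$; here $ap=p$ and $bp=Q^3$.

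\emph{Step 3 (apply Fine's transformation).} I will use Fine's transformation (from his monograph; it follows from Heine's transformation and iteration of the functional equation of $F$):
\begin{align*}
F(a,b;t)=\sum_{n\ge0}\frac{(b/a;p)_n(-atp)^n p^{n(n-1)/2}}{(bp;p)_n(t;p)_{n+1}}.
\end{align*}
With $a=1$, $b=t=Q$ and $p=Q^2$, the numerator is $(Q;Q^2)_n(-1)^nQ^{n^2+2n}$ and the denominator is $(Q^3;Q^2)_n(Q;Q^2)_{n+1}$. Multiplying by $Q/(1-Q)$ and using
\begin{align*}
\frac{(Q;Q^2)_n}{(1-Q)(Q^3;Q^2)_n}=\frac{(Q;Q^2)_n}{(Q;Q^2)_{n+1}}=\frac1{1-Q^{2n+1}},
\end{align*}
each summand becomes $(-1)^nQ^{(n+1)^2}/\bigl((1-Q^{2n+1})(Q;Q^2)_{n+1}\bigr)$. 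This is exactly the right side of \eqref{plan target}, which finishes the proof of the displayed identity.

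\emph{Step 4 (main obstacle).} The main risk is getting the exact form of Fine's transformation right: the powers of $p$ and the placement of $(t;p)_{n+1}$ versus $(t;p)_n$ must be correct. If the citation is uncertain, I will derive it directly. Start from Fine's functional equation $(1-t)F(a,b;t)=1+\frac{(b-atp)t}{1-bp}F\bigl(a,bp;tp\bigr)$ (or its equivalent form), iterate it $N$ times, and let $N\to\infty$. This telescopes into the displayed series, and convergence for $|Q|<1$ is clear. As a check, I will compare the first few coefficients of both sides of \eqref{plan target}.

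\emph{Step 5 (mixed mock modularity).} The analytic and modular properties of $\mathcal{U}$ as a weight $3/2$ mock modular form are a separate input. With that input, the identity above shows that $\overline{\mathrm{oe}}(q)$ is a mixed mock modular form.
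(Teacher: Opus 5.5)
Your proof is correct and is essentially the paper's: the paper makes the same normalisation and then transforms the same series by Heine's transformation \eqref{hiene} followed by \eqref{2phi1}, while your single use of Fine's transformation (which is just \eqref{2phi1} in base $q^4$ applied directly to $\sum_{n\ge0}q^{2n}(q^4;q^4)_n/(q^6;q^4)_n$) reaches the same series in one step, so the Heine step is not needed, and your target identity is exactly \eqref{new representation of U}. The only slip is in your fallback Step 4, where the functional equation should read $(1-t)F(a,b;t)=1+\frac{(b-a)tp}{1-bp}F(a,bp;tp)$; iterating this corrected version does telescope to the transformation you use.
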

We here note that the function $\mathcal{U}(q)$ is a mock modular form of weight $3/2$ with shadow $\frac{i}{2\pi}\frac{\eta^2(2\tau)}{\eta(\tau)}$, as shown by Alfes, Bringmann and Lovejoy \cite[Thoerem 4.5]{abl} (also see \cite[Section 6]{blo}). More precisely, they studied the modular properties of the $q$-series: 
\begin{align}\label{N}
N^o(a,b;z,q):=\sum_{n=0}^\infty\frac{(-q/a;q^2)_n(-q/b;q^2)_n(ab)^nq^{2n+1}}{(zq;q^2)_{n+1}(q/z;q^2)_{n+1}}.
\end{align}
Clearly (letting $a\to0$, $b=-1$ and $z=1$ in \eqref{N}),
\begin{align}\label{N in terms of U1}
N^o(0,-1;1,q)=\mathcal{U}(q).
\end{align}
The function $N^o(0,-1;1,q)$ or $\mathcal{U}(q)$ also appears in the work of Chen and Chen \cite{cc} where they find Heck-Rogers type representation for it. 

Combinatorial  interpretation of $\mathcal{U}(q)$ is the subject of our next result. To that end, we recall that the odd Ferrers graphs, introduced by Andrews \cite[Section 8, pp.~60--62]{andrews invent} (also see \cite[Section 4]{andrews parity1}), which consist of a Ferrers graph using 2’s with a surrounding border of 1’s. For example,  the partition $6+7+7+5+1$ can be represented through Andrews' odd Ferrers graph as:
\begin{align*}
\begin{array}{r *{8}{c}} 
	1& 	1 & 1& 1 & 1 & 1 &  & \\
	\cline{2-3}
	1& 	\multicolumn{1}{|c}{2} & \multicolumn{1}{c|}{2} & 2  &   &   &   &   \\
	1& \multicolumn{1}{|c}{2} & \multicolumn{1}{c|}{2} & 2  &  &   &   &   \\
	\cline{2-3} 
	1 & 2 & 2 &  &   &   &   &   \\
	1 &  &   &   &   &   &   &   
\end{array}
\end{align*}
As Andrews \cite[p.~60]{andrews invent} noted that generating function of odd Ferrers graph is $q \omega(q)$, where $\omega(q)$ is the Ramanujan's third order mock theta function 
\begin{align*}
\omega(q)=\sum_{n=1}^\infty\frac{q^{n-1}}{(q;q^2)_{n}}.
\end{align*}

By Durfee square, we here mean by a largest square which is under the boundary made up of 1's. Moreover,  self-conjugate odd Ferrers graphs are ones that are unaltered under conjugation (The conjugate of an odd Ferrers graph is obtained by interchanging rows and columns). As shown in \cite[p.~68]{andrews invent} (see \cite[pp.439-440]{andrews parity1}), the generating function of self-conjugate odd Ferrers graphs is $q\nu(-q^2)$, where $\nu(q)$ is defined in \eqref{nuq}.

We next introduce certain variations of odd Ferrers graphs that are not merely artificial constructions but help us to explain the combinatorial interpretations of two mock theta functions of orders 2 and 3, the mock modular form $\mathcal{U}(q)$, and the mock Maass theta function $\sigma^*(q)$. We begin with the first of these variations by defining   an odd Ferrers graph to be a \emph{Durfee odd Ferrers graph} if the entries along the boundary of the Durfee square are 2's, while the remaining entries on the boundary of the odd Ferrers graph are 1's. For example,  the partition $11+8+8+5+3$ can be represented by the following Durfee odd Ferrers graph:
\begin{align*}
\begin{array}{r *{8}{c}} 
	2& 	2 & 2& 1 & 1 & 1 & 1 & 1 \\
	\cline{2-3}
	2& 	\multicolumn{1}{|c}{2} & \multicolumn{1}{c|}{2} & 2  &   &   &   &   \\
	2& \multicolumn{1}{|c}{2} & \multicolumn{1}{c|}{2} & 2  &  &   &   &   \\
	\cline{2-3} 
	1 & 2 & 2 &  &   &   &   &   \\
	1 & 2 &  &   &   &   &   &  
\end{array}
\end{align*}
It can observed that the generating function for Durfee odd Ferrers graphs is given by $\sum_{n=1}^\infty q^{2n^2}/(q;q^2)_n^2$, whereas the number of self-conjzugate Durfee odd Ferrers graph are coefficients of Ramanujan's another third mock theta function \cite[p.~5, Equation ]{andrews rlnb}, namely,
\begin{align*}
	\psi(q^2)=\sum_{n=1}^\infty\frac{q^{2n^2}}{(q^2;q^4)_n}.
\end{align*}

We now define \emph{colored Durfee odd Ferrers graph}, it is a Durfee odd Ferrers graph in which only the largest row of the form $(2n-1)$ below the Durfee square of size $(n-1)$ and the largest column of size $(2n-1)$ to its right may appear in two colors.

We first provide a combinatorial interpretation for $\mathcal{U}(q^2)$ in terms of colored Durfee odd Ferrers graph and then one for $\mathcal{U}(q)$.
\begin{theorem}\label{combinatorics of U2}
 $\mathcal{U}(q^2)$ is a generating function for excess number of self conjugate colored Durfee odd Ferrers graphs with even size Durfee square over with odd size Durfee square.
	
	
\end{theorem}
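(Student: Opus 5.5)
We read the statement off the $q$-series term by term. Writing \eqref{u defn} at $q^2$,
\begin{align*}
\mathcal{U}(q^2)=\sum_{n=1}^\infty (-1)^{n+1}\frac{q^{2n^2}}{(1-q^{4n-2})(q^2;q^4)_n},
\end{align*}
we will show that the $n$-th summand, without its sign, is exactly the generating function of self-conjugate colored Durfee odd Ferrers graphs whose Durfee square is indexed by $n$. The sign $(-1)^{n+1}$ then separates the two parities of the Durfee square size, which gives the stated excess.

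\textbf{Step 1: the uncolored case.} We first settle the parametrization of self-conjugate Durfee odd Ferrers graphs. The paper states that their generating function is $\psi(q^2)=\sum_{n\ge1}q^{2n^2}/(q^2;q^4)_n$, and we will reprove this with a decomposition we can refine.
\begin{itemize}
\item A Durfee odd Ferrers graph whose first row and first column have length $n$ has its $(n-1)\times(n-1)$ block of $2$'s together with a border of $2$'s. The corner square then carries total weight $2n^2$ (for $n=1$ this is the single corner entry weighted as in the example). This accounts for the factor $q^{2n^2}$.
\item By self-conjugacy, the material to the right of the square and the material below it are mirror images. A row below the square of length $j$ (one boundary $1$ followed by $2$'s, weight $2j-1$) is paired with the matching column.
\item Each mirrored pair therefore contributes $2(2j-1)=4j-2$, with $1\le j\le n$.
\item Summing over choices of multiplicities gives $1/(q^2;q^4)_n$.
\end{itemize}
We check this against the example in the paper and against small cases.

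\textbf{Step 2: the coloring.} In a colored graph with square of size $n-1$ (that is, index $n$), only the largest allowed row, of the form $(2n-1)$, and its mirror column may appear in two colors. For self-conjugate graphs the row and column colors are tied together, so the maximal pair of weight $4n-2$ comes in two colors. Its generating factor changes from $1/(1-q^{4n-2})$ to $1/(1-q^{4n-2})^2$. The extra factor $1/(1-q^{4n-2})$ is exactly the additional denominator in the summand above. Combining Steps 1 and 2, the $n$-th summand counts the self-conjugate colored graphs with index $n$.

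\textbf{Step 3: signs and the main obstacle.} Inserting $(-1)^{n+1}$ turns the sum into the excess of graphs with one parity of $n$ over the other. We must then match "even size Durfee square" to the sign $+$. The main obstacle is the indexing convention: which quantity is called the Durfee square's size ($n$, $n-1$, or the square in the $2$-lattice), and whether the corner contributes $2n^2$ exactly. The sign $(-1)^{n+1}$ is positive for odd $n$, so the statement requires the size to be $n-1$. The definition of colored graphs ("Durfee square of size $(n-1)$") supports this, and we will confirm it by expanding the first few terms, for instance $q^2/(1-q^2)^2-\cdots$, and counting graphs by hand.
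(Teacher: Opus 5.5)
Your proposal is correct and follows the same route as the paper. The Durfee square of size $n-1$ with its border of $2$'s gives $q^{2n^2}$. Self-conjugacy pairs each row of weight $2j-1\le 2n-1$ below the square with its mirror column, which gives $1/(q^2;q^4)_n$. The two colors on the maximal pair supply the extra factor $1/(1-q^{4n-2})$.

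Your check that $(-1)^{n+1}=+1$ exactly when the size $n-1$ is even is a detail the paper leaves implicit. One wording fix: it is the $2$-border along the first row and first column that has length $n$, not the row and column themselves, which continue with $1$'s.
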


The next corollary shows that the excess number of partitions in the above theorem is given by Hurwitz class numbers corresponding to a certain arithmetic progression.
\begin{corollary}\label{hurwitz combinatorics}
	Let $u_{e}(n)$ \textup{(}resp. $u_{o}(n)$\textup{)} denote the number of self conjugate colored Durfee odd Ferrers graphs with even \textup{(}resp. odd\textup{)} size Durfee square. Then, for \emph{even} $n$, we have
	\begin{align}\label{hurwitz combinatorics eqn}
		u_{e}(n)-u_{o}(n)=H(4n-1),
	\end{align}
where $H(n)$ are Hurwitz class numbers  of binary quadratic forms of discriminant $-n$. 
\end{corollary}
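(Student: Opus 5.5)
By Theorem \ref{combinatorics of U2}, $\mathcal{U}(q^2)=\sum_{n\ge0}(u_e(n)-u_o(n))q^n$. The plan is therefore to identify the coefficient of $q^n$ in $\mathcal{U}(q^2)$ for even $n$; equivalently, writing $n=2m$, to identify the coefficient of $q^m$ in $\mathcal{U}(q)$. The claim then reads
\begin{align*}
\mathcal{U}(q)\equiv\sum_{m\ge0}H(8m-1)q^m
\end{align*}
modulo the terms that the restriction to even $n$ discards.

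To prove this, I would use the identification \eqref{N in terms of U1}, $\mathcal{U}(q)=N^o(0,-1;1,q)$, together with the results of Alfes, Bringmann and Lovejoy \cite{abl}. They show that $\mathcal{U}$ is mock modular of weight $3/2$ with shadow proportional to $\eta^2(2\tau)/\eta(\tau)=\sum_{n\ge0}q^{(2n+1)^2/8}$. This is exactly the shadow shape of Zagier's class number generating function $\mathcal{H}(\tau)=\sum_{N}H(N)q^N$, whose shadow is $\theta(\tau)$.

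The key steps, in order, are as follows.
\begin{itemize}
\item[(i)] Write $\mathcal{H}$ in terms of its $2$-adic components. Extract the progression $N\equiv 7\pmod 8$, that is, $\sum_m H(8m-1)q^{8m-1}$, by twisting by characters modulo $8$. The resulting piece has completion with shadow a multiple of $\sum_{n\ \mathrm{odd}}q^{n^2}$. After the substitution $q\mapsto q^{1/8}$, this matches $\eta^2(2\tau)/\eta(\tau)$ up to rescaling.
\item[(ii)] Compare the completed harmonic Maass form attached to $q^{-1/8}\mathcal{U}(q)$ with the one attached to $\sum_m H(8m-1)q^{m-1/8}$, suitably rescaled. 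Choose constants so that the shadows coincide, so that the difference is a holomorphic modular form of weight $3/2$ on some $\Gamma_0(N)$.
\item[(iii)] Show that this holomorphic difference is supported only on exponents that the even-$n$ restriction kills, or is zero. I would do this via a Sturm bound check on finitely many coefficients, computed from \eqref{u defn}.
\end{itemize}

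Alternatively, a more elementary route is available. Chen and Chen \cite{cc} give a Hecke--Rogers type representation of $\mathcal{U}(q)$, an indefinite double sum. Kronecker--Hurwitz type formulas express $\sum_m H(8m-1)q^m$ as a similar Appell--Lerch or Hecke-type sum; such formulas go back to Kronecker and Humbert, and the Mortenson/Zwegers form is the convenient one. Matching the two double sums termwise, after splitting by parity of the summation variables, would give \eqref{hurwitz combinatorics eqn} directly. I would try this route first, since it avoids level computations.

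The main obstacle is the precise matching of normalizations and of the arithmetic progression: pinning down which residue classes survive, why only even $n$ is claimed, and controlling any holomorphic modular-form correction in step (iii). In the modular approach this requires explicitly computing the level and a Sturm bound. In the Hecke-series approach it requires a careful rearrangement of the indefinite theta sums. Either way, the decisive computation is identifying $\mathcal{U}(q)$, or its even part, with the $7 \bmod 8$ class-number series.
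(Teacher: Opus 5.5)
Your first step is the paper's: Theorem~\ref{combinatorics of U2} gives $\mathcal{U}(q^2)=\sum_{n}(u_e(n)-u_o(n))q^n$, so everything reduces to the coefficients of $\mathcal{U}(q)$. You differ in the second ingredient. The paper proves nothing further. It quotes $\mathcal{U}(q)=\sum_{m\ge1}H(8m-1)q^m$ from \cite[Equation (1.5)]{abl}, replaces $q$ by $q^2$, and reads off $u_e(2m)-u_o(2m)=H(8m-1)=H(4n-1)$ with $n=2m$.

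You instead propose to re-derive that class number identity, by one of two routes:
\begin{itemize}
\item match the completion of $q^{-1/8}\mathcal{U}(q)$ against the $N\equiv 7\pmod 8$ piece of Zagier's series and finish with a Sturm bound; or
\item match Hecke-type double sums, which is essentially the route of Chen and Chen \cite{cc}.
\end{itemize}
Either would make the corollary self-contained, and your shadow bookkeeping is right: $\sum_{n\in\mathbb{Z},\,n\ \mathrm{odd}}q^{n^2/8}=2\eta^2(2\tau)/\eta(\tau)$. The cost is a level and multiplier computation plus a finite coefficient check, all of which the citation makes unnecessary. None of this work is actually carried out in the proposal.

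Two points in your plan need correcting.

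First, the even-$n$ restriction discards nothing from $\mathcal{U}(q)$. The coefficient of $q^{2m}$ in $\mathcal{U}(q^2)$ is the coefficient of $q^m$ in $\mathcal{U}(q)$ for every $m\ge1$. So you need the exact identity, and in step (iii) the holomorphic difference must vanish identically. Being ``supported on exponents the restriction kills'' is not a possible outcome, since there are no such exponents. Only even $n$ is claimed because for odd $n$ the left side is trivially $0$, while $H(4n-1)$ with $4n-1\equiv 3\pmod 8$ is generally nonzero (e.g.\ $H(3)=1/3$, $H(11)=1$).

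Second, in step (ii), equal shadows only make the difference a \emph{weakly} holomorphic modular form. Before you can invoke a Sturm bound, you must check that the completion of $\mathcal{U}$ has no principal part at any cusp of the relevant $\Gamma_0(N)$.
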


While the preceding results concern $\mathcal{U}(q^2)$, the next result is devoted to $\mathcal{U}(q)$. To that end, we first derive a new $q$-series representation for $\mathcal{U}(q)$, which subsequently allows us to obtain its combinatorial interpretation. For that, let us introduce a refined version of Andrews' odd Ferrers graph, which we call a  \emph{Refined odd Ferrers graph}. These are odd Ferrers graphs in which, except for the first entry, the first column consists of both 0's and 1's, and no two rows containing a 0 are identical\footnote{Obviously, a row consisting of just the single entry $0$ is not allowed.}. For example,
\begin{align*}
	\begin{array}{r *{8}{c}} 
		1& 	1 & 1& 1 & 1 & 1 & 1 & 1 \\
		0& 	2 & 2 & 2  &   &   &   &   \\
		1& 2 & 2 & 2  &  &   &   &   \\
		1 & 2 & 2 &  2&   &   &   &   \\
		0& 2 &  &   &   &   &   &   
	\end{array}
\end{align*}
Interestingly, it can be seen that the coefficients of McIntosh's second-order mock theta function \cite[Section 2]{mc}:
\begin{align*}
A(q):=\sum_{n=0}^\infty \frac{(-q^2;q^2)_n}{(q;q^2)_{n+1}}q^{n+1}.
\end{align*}
enumerate Refined odd Ferrers graphs.

Let $v_{e}(n)$ \textup{(}resp. $v_{o}(n)$\textup{)} denote the number of Refined odd Ferrers graphs having an even \textup{(}resp. odd\textup{)} number of rows with first entry $0$.
\begin{theorem}\label{U in terms of difference of odd Ferrers}
	We have,
		\begin{align}\label{new representation of U}
		\mathcal{U}(q)=\sum_{n=0}^{\infty}\frac{(q^2;q^2)_n}{(q;q^2)_{n+1}}q^{n+1}.
	\end{align}
Consequently, $\mathcal{U}(q)$ is the generating function for the difference $v_{e}(n)-v_{o}(n)$. Moreover, for $n\geq1$, we have
\begin{align}\label{U in terms of difference of odd Ferrers eqn}
v_{e}(n)-v_{o}(n)=H(8n-1).
\end{align}
\end{theorem}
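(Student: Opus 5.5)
The plan has three parts: prove the $q$-series identity \eqref{new representation of U}, read off the combinatorial interpretation, and obtain \eqref{U in terms of difference of odd Ferrers eqn} from the known class number generating function for $\mathcal{U}(q)$.

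\textbf{The identity.} Write the right-hand side as
\begin{align*}
R(q)=\sum_{n\ge0}\frac{(q^2;q^2)_n q^{n+1}}{(q;q^2)_{n+1}}.
\end{align*}
This is a limiting case of a ${}_3\phi_2$ in base $q^2$, with numerator parameters $q^2$, $q^2$ and $0$. Similarly, the summand of $\mathcal{U}(q)$ in \eqref{u defn} equals
\begin{align*}
\frac{(-1)^{n+1}q^{n^2}}{(q;q^2)_{n-1}(1-q^{2n-1})^2}.
\end{align*}
I would connect the two with a Watson--Whipple or Fine-type transformation. The most direct route is through \eqref{N in terms of U1}, which gives $\mathcal{U}(q)=N^o(0,-1;1,q)$:
\begin{align*}
\mathcal{U}(q)=\sum_{n\ge0}\frac{(q^2;q^2)_n(-1)^nq^{2n+1}\cdot\lim_{a\to0}a^n(-q/a;q^2)_n}{(q;q^2)_{n+1}^2}.
\end{align*}
Since $\lim_{a\to0}a^n(-q/a;q^2)_n=q^{n^2}$, this becomes
\begin{align*}
\mathcal{U}(q)=\sum_{n\ge0}\frac{(q^2;q^2)_n(-1)^n q^{n^2+2n+1}}{(q;q^2)_{n+1}^2}.
\end{align*}
I would then apply the $q\to q^2$ case of the transformation
\begin{align*}
\sum_{n\ge0}\frac{(b;q)_n t^n}{(c;q)_n}\quad\text{(Heine/Fine's first transformation)},
\end{align*}
or its Rogers--Fine version, which trades a series with one denominator $(q;q^2)_{n+1}$ and linear exponent for one with squared denominators and quadratic exponent. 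Concretely, I would start from Fine's function
\begin{align*}
F(a,b;t)=\sum_{n\ge0}\frac{(aq;q)_n t^n}{(bq;q)_n}
\end{align*}
in base $q^2$, with $aq\to q^2$, $bq\to q^3$ and $t=q$, after factoring out $q/(1-q)$. Rogers--Fine then gives
\begin{align*}
F(a,b;t)=\sum_n \frac{(aq)_n(atq/b)_n b^nt^nq^{n^2}(1-atq^{2n+1})}{(bq)_n(t)_{n+1}}.
\end{align*}
The factor $(atq/b)_n$ becomes $(q^2;q^2)_n$ up to a shift, and the remaining factors produce squared odd Pochhammers. Matching the resulting two-term sum (coming from $1-atq^{2n+1}$) with the sign-alternating form of $N^o$ is the main obstacle. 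I expect to need a telescoping or partial-fraction split that turns the factor $(1-atq^{2n+1})$ into the single squared denominator. If Rogers--Fine does not match exactly, I would instead verify both sides as the same limit of Andrews' ${}_{10}\phi_9$/Watson transformation used in \cite{abl} for $N^o$.

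\textbf{Combinatorics.} In $q^{n+1}/(q;q^2)_{n+1}$, the factor $q^{2n+1}/(q;q^2)_{n+1}$ gives the odd Ferrers graph whose top row of 1's has length $2n+1$. That row is its largest part, and the remaining rows are odd parts $\le 2n+1$ built from 2's plus a leading 1. The factor $(q^2;q^2)_n q^{-n}$ must then be read as a signed choice of distinct rows whose leading entry is changed from $1$ to $0$. I would justify this reading as follows:
\begin{align*}
\frac{(-q^2;q^2)_n}{(q;q^2)_{n+1}}q^{n+1}
\end{align*}
is exactly the generating function the paper attributes to $A(q)$ for Refined odd Ferrers graphs, where each distinct row type is marked with weight $-1$ per zero-row and lowering its entry $1\to0$ reduces the size by $1$. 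Replacing $-q^2$ by $q^2$ inserts the sign $(-1)^{\#\text{zero rows}}$, and this yields $\sum(v_e(n)-v_o(n))q^n$. I would make the bijective bookkeeping precise by matching the exponents of $q^{2j}$ to the zero-rows of length $2j+1$.

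\textbf{Class numbers.} For \eqref{U in terms of difference of odd Ferrers eqn}, I would invoke the known result (from \cite{abl,cc}, or equivalently the computation behind Corollary \ref{hurwitz combinatorics}) that
\begin{align*}
\mathcal{U}(q)=\sum_{n\ge1}H(8n-1)q^n.
\end{align*}
This is the holomorphic part of the weight $3/2$ form with shadow proportional to $\eta^2(2\tau)/\eta(\tau)$, which is Zagier's class number generating function restricted to discriminants $\equiv -1\pmod 8$. If this identity is not citable directly, I would derive it from Corollary \ref{hurwitz combinatorics}, which already encodes $H(4m-1)$ for even $m$ through $\mathcal{U}(q^2)$. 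Setting $m=2n$ gives $H(8n-1)$ as the coefficient of $q^{2n}$ in $\mathcal{U}(q^2)$, that is, the coefficient of $q^n$ in $\mathcal{U}(q)$. Combining this with the combinatorial interpretation completes the proof.
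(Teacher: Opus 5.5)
Your combinatorial reading and your class-number step follow the paper, but the heart of the statement, the identity \eqref{new representation of U}, is not proved, and the route you sketch does not lead to it.

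First, your evaluation of $N^o(0,-1;1,q)$ is wrong. For $b=-1$ the factor $(-q/b;q^2)_n$ is $(q;q^2)_n$, not $(q^2;q^2)_n$. With the correct factor, \eqref{N in terms of U1} just returns the definition \eqref{u defn}, namely $\mathcal{U}(q)=\sum_{n\ge0}(-1)^nq^{(n+1)^2}/\big((1-q^{2n+1})(q;q^2)_{n+1}\big)$. The series you wrote is a different function: its coefficient of $q^5$ is $3$, whereas that of $\mathcal{U}(q)$ is $H(39)=4$. So the target you propose to match is not $\mathcal{U}(q)$.

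Second, Rogers--Fine with your parameters (base $q^2$, $a=1$, $b=q$, $t=q$) gives
\begin{align*}
\sum_{n\ge0}\frac{(q^2;q^2)_n}{(q^3;q^2)_n}q^n=\sum_{n\ge0}\frac{(q^2;q^2)_n^2\,q^{2n^2+2n}\,(1-q^{4n+3})}{(q^3;q^2)_n\,(q;q^2)_{n+1}}.
\end{align*}
This has no factor $(-1)^n$ and has numerator $(q^2;q^2)_n^2$, so it is not of the shape of \eqref{u defn}. The further step you would need (the unspecified telescoping or partial-fraction split, or the ${}_{10}\phi_9$ fallback) is exactly the content of the identity, and you leave it open.

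The missing idea is that Jackson's transformation \eqref{2phi1} does everything in one step. Its factor $(-bz)^nq^{n(n-1)/2}$ is precisely what produces the alternating sign and the quadratic exponent. In base $q^2$ with $a=b=q^2$, $c=q^3$, $z=q$ it reads
\begin{align*}
\sum_{n\ge0}\frac{(q^2;q^2)_n}{(q^3;q^2)_n}q^n=\frac{1}{1-q}\sum_{n\ge0}\frac{(q;q^2)_n(-1)^nq^{n^2+2n}}{(q^3;q^2)_n^2}.
\end{align*}
Multiplying by $q/(1-q)$ turns the left side into your $R(q)$. Since $(1-q)^2(q^3;q^2)_n^2=(q;q^2)_{n+1}^2$, the right side becomes $\sum_{n\ge0}(-1)^nq^{(n+1)^2}/\big((1-q^{2n+1})(q;q^2)_{n+1}\big)=\mathcal{U}(q)$. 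This is the paper's argument.

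Two smaller points.
\begin{itemize}
\item Your combinatorial bookkeeping is off. The top row has $n+1$ ones and contributes $q^{n+1}$; it is not a row of length $2n+1$. The rows with leading $1$ are odd parts at most $2n+1$, giving $1/(q;q^2)_{n+1}$. The zero-rows are additional rows of even size $2k$, $1\le k\le n$, each used at most once, so the factor $1-q^{2k}$ in $(q^2;q^2)_n$ records such a row with sign $-1$. No factor $q^{-n}$ or relabelling of existing rows is involved.
\item Your fallback of deriving $\mathcal{U}(q)=\sum_{n\ge1}H(8n-1)q^n$ from Corollary \ref{hurwitz combinatorics} is circular, because that corollary is itself deduced from this identity. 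You need to cite \cite[Equation (1.5)]{abl}, as the paper does.
\end{itemize}
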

There is another interpretation of $\ \mathcal{U}(q)$ which we provide next. Let $w(n)$ denote the number of partitions of $n$ into consecutive odd parts (with multiplicity) where the largest part may appear in two colors. Moreover, let $w_{e}(n)$ (resp. $w_{o}(n)$)  denote the number of partitions of $n$ counted by $w(n)$ such that total number of odd parts (without multiplicity) are even (resp. odd) in counting.

\begin{theorem}\label{U in terms of difference of simple}
	The function $\mathcal{U}(q)$ is the generating function for the difference $w_{o}(n)-w_{e}(n)$. 
	
\end{theorem}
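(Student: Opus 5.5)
Partitions into consecutive odd parts $1,3,\dots,2n-1$ (each appearing at least once), with the largest part $2n-1$ in two colors, have generating function
\begin{align*}
\frac{q^{1+3+\cdots+(2n-1)}}{(1-q)(1-q^3)\cdots(1-q^{2n-3})}\cdot\frac{1}{(1-q^{2n-1})^2}=\frac{q^{n^2}}{(q;q^2)_n(1-q^{2n-1})}.
\end{align*}
This uses the reading that the two copies of the largest part together contribute at least one part. Indeed, in two colors the part $2n-1$ contributes $\frac{1}{(1-q^{2n-1})^2}-1$ if at least one part is required. That is not the same as $\frac{q^{2n-1}}{(1-q^{2n-1})^2}$, so the count must be set up carefully. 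The cleanest reading that matches \eqref{u defn} is the following: the largest part appears at least once in a first color, and may additionally appear (zero or more times) in a second color. This gives $q^{2n-1}/(1-q^{2n-1})^2$ for the top part, and each smaller part $2j-1$ appears at least once, giving $q^{2j-1}/(1-q^{2j-1})$. Multiplying over $j$ yields exactly the $n$-th summand of \eqref{u defn}. I will fix this convention for the two colors at the start and check it against small coefficients, e.g. $\mathcal{U}(q)=q+q^2+\dots$ from \eqref{u defn}.

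With that convention, the number of distinct odd parts in such a partition is exactly $n$. Hence, summing over $n\ge1$, the factor $(-1)^{n+1}$ weights the partition by $+1$ when the number of distinct parts is odd and by $-1$ when it is even. So \eqref{u defn} reads
\begin{align*}
\mathcal{U}(q)=\sum_{n\ge1}(-1)^{n+1}\sum_{m}(\#\text{ such partitions of } m \text{ with } n \text{ distinct parts})\,q^m=\sum_{m\ge1}\bigl(w_o(m)-w_e(m)\bigr)q^m.
\end{align*}
This is the theorem, so the proof is just the identification of the summand of \eqref{u defn} with the generating function of the objects counted by $w(n)$, refined by $n$.

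The main obstacle is purely interpretive: the phrase ``the largest part may appear in two colors'' must be pinned down so that the largest-part factor is $q^{2n-1}/(1-q^{2n-1})^2$. Equivalently, the multiplicity $k\ge1$ of the largest part comes with $k$ choices of color pattern, namely how many of its copies carry the second color, from $0$ to $k-1$. Once this is fixed, the rest is the standard product expansion, and no $q$-series transformation (such as \eqref{new representation of U}) is needed.
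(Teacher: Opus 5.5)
Your proof is correct and is essentially the paper's argument: the paper likewise reads $q^{n^2}=q^{1+3+\cdots+(2n-1)}$ as forcing one copy of each of $1,3,\dots,2n-1$ and $1/\bigl((1-q^{2n-1})^2(q;q^2)_{n-1}\bigr)$ as the additional copies with only the top part two-colored, which is exactly your convention, and the sign $(-1)^{n+1}$ then records the parity of the number $n$ of distinct parts. One small slip in your sanity check: the $n=1$ summand is $q/(1-q)^2$, so $\mathcal{U}(q)=q+2q^2+3q^3+3q^4+\cdots$ (matching $H(7),H(15),H(23),H(31)$), not $q+q^2+\cdots$.
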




Regarding interpretations of Hurwitz class numbers in terms of partition classes arising in the form given in \eqref{hurwitz combinatorics eqn} and \eqref{U in terms of difference of odd Ferrers eqn}, results of this type have been obtained previously in the literature. For example, Bringmann and Lovejoy \cite[Corollary 1]{bl} established such interpretations in terms of overpartitions and the M2-rank, while Chen and Garvan \cite{cg} obtained analogous interpretations for the coefficients of second-order mock theta functions. These results, together with our Corollary \ref{hurwitz combinatorics}, yield further results, which we state in Corollaries \ref{partition difference1} and \ref{partition difference2} below. 

We state the following partition identities, which follow by combining the above results for $\mathcal{U}(q)$, and which appear rather surprising given the disparate definitions of the corresponding partition functions.
\begin{corollary}\label{U combi}
We have,
\begin{align}\label{1t}
v_e(n)-v_o(n)=w_o(n)-w_e(n)=H(8n-1)=Q(n),
\end{align}
where $Q(n)$ counts the number of partitions of $n$ into consecutive parts, all singletons except possibly the largest \textup{(}see $A321440$ on OEIS or \cite[Equation (4.3)]{cc}\textup{)}.
\end{corollary}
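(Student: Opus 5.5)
The plan is to chain together the results already stated, so that each equality in \eqref{1t} reduces to something proved earlier or to a known identity. By Theorem \ref{U in terms of difference of odd Ferrers}, $\mathcal{U}(q)$ is the generating function of $v_e(n)-v_o(n)$, and \eqref{U in terms of difference of odd Ferrers eqn} already gives $v_e(n)-v_o(n)=H(8n-1)$ for $n\ge1$. By Theorem \ref{U in terms of difference of simple}, $\mathcal{U}(q)$ is also the generating function of $w_o(n)-w_e(n)$. Comparing coefficients of $q^n$ in these two expansions of the same series gives the first two equalities in \eqref{1t}.

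It then remains to identify these coefficients with $Q(n)$. The step I would carry out is to write down the generating function of $Q(n)$ directly. A partition into consecutive parts $m, m+1, \dots, m+k-1$, all singletons except the largest, which occurs $j\ge1$ times, contributes
\begin{align*}
q^{m+(m+1)+\cdots+(m+k-2)}\,\frac{q^{m+k-1}}{1-q^{m+k-1}}.
\end{align*}
Summing over $m\ge1$ and $k\ge1$ gives $\sum_n Q(n)q^n$. I would then show that this equals $\mathcal{U}(q)$, or equivalently equals \eqref{new representation of U}.

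The cleanest route is to cite Chen and Chen \cite[Equation (4.3)]{cc}. There the generating function of $Q(n)$ appears in connection with $N^o(0,-1;1,q)=\mathcal{U}(q)$ (see \eqref{N in terms of U1}), which would give $Q(n)=H(8n-1)$ directly. If that citation does not match precisely, I would verify the identity by hand. Starting from \eqref{u defn}, I would expand $1/(1-q^{2n-1})$ geometrically and apply a standard $q$-series transformation, such as a limiting case of Heine's transformation or the Fine function identities, to reach the double sum above.

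I expect this matching of the $Q(n)$ generating function with $\mathcal{U}(q)$ to be the main obstacle, because it depends on aligning conventions (whether $k=1$ is allowed, and the indexing of $n$) with the cited sources. As a consistency check I would compare the first few coefficients: $\mathcal{U}(q)=q+\dots$ and $Q(1)=1=H(7)$.
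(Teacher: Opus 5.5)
Your proposal is correct and is essentially the paper's argument: the paper reads \eqref{1t} off the third, fourth, sixth and seventh expressions in \eqref{uq several repres}, namely Theorems \ref{U in terms of difference of odd Ferrers} and \ref{U in terms of difference of simple}, the class-number expansion $\mathcal{U}(q)=\sum_{n\ge1}H(8n-1)q^n$, and Chen--Chen's representation $\mathcal{U}(q)=\sum_{N\ge1}\sum_{m=1}^{N}q^{N(N+1)/2-m(m-1)/2}/(1-q^N)$. Your double sum for $\sum_{n\ge1} Q(n)q^n$ becomes exactly this last expression after setting $N=m+k-1$, so the citation matches with no convention mismatch and the by-hand fallback is unnecessary.
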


We now observe that the definition of $\mathcal{U}(q)$ resembles with that of the famous function $\sigma^*(q)$ which is defined as\footnote{Usually, there is an extra factor of $2$ in the definition of $\sigma^*(q)$.} \cite{cohen}
\begin{align}\label{sigma*}
\sigma^*(q)=\sum_{n=1}^{\infty}\frac{(-1)^nq^{n^2}}{(q;q^2)_{n}}.
\end{align}
It is well-known that Cohen \cite{cohen} constructed Maass wave forms using $\sigma^*(q)$ and its companion function $\sigma(q)$ of Ramanujan \cite{ram}:
\begin{align}\label{sigma}
\sigma(q)=\sum_{n=0}^{\infty}\frac{q^{\frac{1}{2}n(n+1)}}{(-q;q)_n}.
\end{align}

Therefore, the combinatorial interpretation of $\mathcal{U}(q^2)$ established in Theorem~\ref{combinatorics of U2} naturally raises the question of whether $\sigma^*(q^2)$ admits a similar interpretation. The following theorem answers this question affirmatively by providing such an interpretation, which, to the best of our knowledge, is new.
\begin{theorem}\label{combinatorics of sigma*}
$\sigma^*(q^2)$ is the generating function for the excess number of self-conjugate Durfee odd Ferrers graphs with an odd size Durfee square over those with an even size Durfee square.
\end{theorem}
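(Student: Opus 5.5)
The plan is to compute the generating function of self-conjugate Durfee odd Ferrers graphs, refined by the size of the Durfee square, and then compare it with \eqref{sigma*} at $q\to q^2$. The paper states that self-conjugate Durfee odd Ferrers graphs are counted by $\psi(q^2)=\sum_{n\ge1} q^{2n^2}/(q^2;q^4)_n$. I will first recover this with the Durfee square size tracked. The term indexed by $n$ should correspond to graphs whose Durfee square has a fixed size determined by $n$, presumably $n-1$ as in the definition of colored Durfee odd Ferrers graphs, where a square of size $n-1$ sits inside a boundary of length $2n-1$.

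First, I will decompose a self-conjugate Durfee odd Ferrers graph with Durfee square of size $n-1$. The top row and first column are bordered, and the boundary cells adjacent to the Durfee square carry 2's. The square of 2's of size $n-1$, together with its 2-valued border row and column segments (length $n$ each, sharing the corner), contributes weight $4(n-1)^2+2\cdot 2n-2=\dots$. I will carefully check that the total weight of this fixed "core" is $2n^2$. The remaining cells lie to the right of the square, which by self-conjugacy is mirrored below it. They consist of rows of 2's of length at most $n-1$ with a 1 in the border, and their mirror columns. A row of length $j$ (with its 1) and its mirror column then contribute $2(2j+1)$, i.e. $q^{2(2j+1)}$ with $j$ from $0$ to $n-1$ and unrestricted multiplicity. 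This gives $1/(q^2;q^4)_n$. Thus the term $q^{2n^2}/(q^2;q^4)_n$ enumerates those graphs whose Durfee square has size $n-1$, and the parity of $n$ determines the parity of the square.

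Second, I will attach the sign. The statement counts odd-size squares positively, so graphs with square size $n-1$ get sign $(-1)^{n-1}\cdot(-1)=(-1)^n$ relative to even ones. The signed generating function is therefore $\sum_{n\ge1}(-1)^n q^{2n^2}/(q^2;q^4)_n$, which is exactly $\sigma^*(q^2)$ by \eqref{sigma*}. If instead the indexing gives square size $n$, the sign flips and I would need an identity for $-\sigma^*$. So the offset must match the paper's convention, which the colored-graph definition (square of size $n-1$ under a row of size $2n-1$) suggests. I would sanity-check on small cases: $n=1$ gives the graph with a single entry and empty square, of weight $q^2$, so "size 0" is even, and it should carry a minus sign. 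Indeed $\sigma^*(q^2)=-q^2+\dots$, which is consistent.

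The main obstacle is the bookkeeping in the first step. I need to verify that the core weight is exactly $2n^2$ given the 2's on the Durfee boundary, and to decide how the corner and the cells just outside the square are treated. I also need to confirm that the rows outside the square have lengths bounded by $n-1$ (giving parts $2,6,\dots,4n-2$, i.e. $(q^2;q^4)_n$) rather than by $n$. I would settle both by matching the example $11+8+8+5+3$ and the known $\psi(q^2)$ enumeration. The signed version then follows immediately.
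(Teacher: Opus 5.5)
Your proposal is correct and is essentially the paper's argument: a Durfee square of size $n-1$ with its border of 2's contributes $q^{2(n-1)^2+4n-2}=q^{2n^2}$, the rows below the square and their mirror-image columns to the right pair up into parts $2,6,\dots,4n-2$, giving $1/(q^2;q^4)_n$, and attaching the sign $(-1)^n$ yields $\sigma^*(q^2)$ via \eqref{sigma*}. The only slip is in your displayed core weight: the square of 2's weighs $2(n-1)^2$, not $4(n-1)^2$, and with that correction the core is exactly $2n^2$, as you anticipated.
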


As we now have a combinatorial interpretation of $\sigma^*(q^2)$ in terms of Ferrers graphs, it is natural to seek one for $\sigma^*(q)$. For this purpose, we use the following representation derived by Cohen \cite[Equation (2)]{cohen}:
\begin{align}\label{sigma* repres}
	\sigma^*(q)=-\sum_{n=0}^{\infty}(q^2;q^2)_nq^{n+1}.
\end{align}
\begin{theorem}\label{combinatorics of sigma* in refined}
	$\sigma^*(q)$ is the generating function for the excess number of Refined  odd Ferrers graphs, where all entries \textup{(}except first\textup{)} of the first column are zero and odd  in counting over even in counting.
\end{theorem}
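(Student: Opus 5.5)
We start from Cohen's representation \eqref{sigma* repres}, written as $\sigma^*(q)=-\sum_{n=0}^{\infty}(q^2;q^2)_nq^{n+1}$. The goal is to read each summand as a signed generating function for a family of Refined odd Ferrers graphs, in the same way \eqref{new representation of U} was interpreted in Theorem \ref{U in terms of difference of odd Ferrers}. The main observation is that $\sigma^*(q)$ differs from \eqref{new representation of U} only by the missing factor $1/(q;q^2)_{n+1}$. That factor is exactly what produces the rows of $1$-starting (odd) parts in a Refined odd Ferrers graph. Dropping it leaves only graphs whose first column below the top entry consists entirely of $0$'s.

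The first step fixes the combinatorial setup. A Refined odd Ferrers graph whose top row has length $n+1$ contributes the factor $q^{n+1}$. We check against the example preceding Theorem \ref{U in terms of difference of odd Ferrers}: a row beginning with $0$ followed by $j$ twos has weight $2j$, and $j\le n$. Rows beginning with $1$ would have weights $2j+1$, $0\le j\le n$, and would be generated by $1/(q;q^2)_{n+1}$. Rows beginning with $0$ must be distinct, so their generating function is $(-q^2;q^2)_n$; this recovers $A(q)$. Inserting the sign $(-1)$ for each $0$-row gives $(q^2;q^2)_n$ instead.

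The second step restricts to graphs in which every row below the first begins with $0$. These graphs are in bijection with pairs consisting of $n\ge 0$ and a set of distinct even numbers from $\{2,4,\dots,2n\}$, the chosen numbers being the row lengths measured in twos. Hence
\[
\sum_{G}(-1)^{r(G)}q^{|G|}=\sum_{n=0}^\infty q^{n+1}(q^2;q^2)_n ,
\]
where $r(G)$ is the number of $0$-rows, that is, the number of entries of the first column other than the first. We must also check that the first column remains a genuine odd Ferrers column. The top row must be the longest, which holds because every $0$-row has length at most $n+1$ (a leading $0$ followed by at most $n$ twos). Rows being weakly decreasing in length is automatic once the distinct rows are sorted.

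Multiplying by $-1$ as in \eqref{sigma* repres} turns $(-1)^{r}$ into $(-1)^{r+1}$. Graphs with $r$ odd are therefore counted positively, which gives the stated excess of odd over even counts. The main obstacle is purely one of conventions. We must confirm that the weight of the top row, the length bound on $0$-rows, and the phrase ``odd in counting'' (the number of zero entries) match the paper's definition exactly. We also have to make sure the empty case $r=0$ (only the top row, weight $n+1$) is treated as allowed, as it is in the $A(q)$ enumeration.
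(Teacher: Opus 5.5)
Your proposal is correct and follows the paper's proof. Both read Cohen's representation \eqref{sigma* repres} term by term: $q^{n+1}$ is the top row of $n+1$ ones, and $(q^2;q^2)_n$ is the signed generating function for the distinct $0$-initial rows of weights $2,4,\dots,2n$, so the overall minus sign makes graphs with an odd number of such rows count positively. You are also right to keep the $n=0$ term and the single-row graphs with $r=0$; the paper's proof writes the sum from $n=1$, which is a slip, since \eqref{sigma* repres} starts at $n=0$.
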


Recall that $\sigma(q)$ and $\sigma^*(q)$ are examples of \emph{mock Maass theta functions}, a term coined by Zwegers \cite{Zwegers}. More generally, Zwegers introduced a class of functions associated with indefinite binary quadratic forms that are ``nearly'' Maass waveforms and called them mock Maass theta functions. Moreover, \emph{mixed mock Maass theta functions} are linear combinations of products of modular forms and mock Maass theta functions (see \cite{bbd}).


In what follows, we provide a new combinatorial example of a mixed mock Maass theta function. To this end, let $k(n)$ denote the number of partitions of $n$ in which the largest part is an odd integer greater than $1$, all other parts are even, and the largest part is not consecutive with any other part. Note that the generating function of $k(n)$ can be given by
\begin{align}\label{kn}
\mathcal{K}(q):=\sum_{n=1}^\infty k(n)q^n=\sum_{n=0}^{\infty}\frac{q^{2n +3}}{(1-q^{2n +3})(q^2;q^2)_n}.
\end{align}

Our next result shows that $\mathcal{K}(q)$ can be expressed in terms of $\sigma^*(q)$, thereby showing that $\mathcal{K}(q)$ is an example of a mixed mock Maass theta function.
\begin{theorem}\label{sigma* as generating function}
We have,
\begin{align}
\mathcal{K}(q)=\frac{1-q}{q(q^2;q^2)_\infty}\sigma^*(q)+\frac{1}{(q^2;q^2)_\infty}.\nonumber
\end{align}
\end{theorem}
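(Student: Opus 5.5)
The plan is to clear the infinite product, rewrite $\mathcal{K}(q)$ by Heine's transformation as a series of the same shape as Cohen's representation \eqref{sigma* repres}, and close the gap with a first-order $q$-difference equation. The theorem is equivalent to
\begin{align*}
(q^2;q^2)_\infty\,\mathcal{K}(q)=1+\frac{1-q}{q}\,\sigma^*(q).
\end{align*}
By \eqref{sigma* repres}, the right-hand side equals $1-(1-q)\sum_{n\ge0}(q^2;q^2)_nq^{n}$. So the task splits into rewriting $\mathcal{K}(q)$ as a Fine-type series $\sum_n (q^2;q^2)_n t^n$, and then relating two such series at different arguments $t$.

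\textbf{Step 1 (Heine).} Since
\begin{align*}
\frac{1}{1-q^{2n+3}}=\frac{1}{1-q^3}\cdot\frac{(q^3;q^2)_n}{(q^5;q^2)_n},
\end{align*}
we can write
\begin{align*}
\mathcal{K}(q)=\frac{q^3}{1-q^3}\sum_{n\ge0}\frac{(0;q^2)_n(q^3;q^2)_n}{(q^2;q^2)_n(q^5;q^2)_n}q^{2n},
\end{align*}
a ${}_2\phi_1(0,q^3;q^5;q^2,q^2)$. Apply Heine's first transformation
\begin{align*}
{}_2\phi_1(a,b;c;q,z)=\frac{(b,az;q)_\infty}{(c,z;q)_\infty}\,{}_2\phi_1(c/b,z;az;q,b)
\end{align*}
with base $q^2$, $a=0$, $b=q^3$, $c=q^5$, $z=q^2$. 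The prefactor becomes $(1-q^3)/(q^2;q^2)_\infty$. The new series is $\sum_n (q^2;q^2)_n q^{3n}$, because the lower parameter $az=0$ cancels one factor of $(q^2;q^2)_n$. Hence
\begin{align*}
(q^2;q^2)_\infty\mathcal{K}(q)=q^3\sum_{n\ge0}(q^2;q^2)_nq^{3n}.
\end{align*}
Here I will check convergence ($|q^3|<1$) and the handling of the zero parameter. If that is delicate, I will instead expand $1/(1-q^{2n+3})$ geometrically and use the $q$-binomial theorem, which gives the same result.

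\textbf{Step 2 (functional equation).} Set $S(t)=\sum_{n\ge0}(q^2;q^2)_nt^n$. Using $(q^2;q^2)_{n+1}=(q^2;q^2)_n-q^{2n+2}(q^2;q^2)_n$ and shifting the index gives
\begin{align*}
S(t)=1+t\bigl(S(t)-q^2S(tq^2)\bigr),\qquad\text{that is,}\qquad (1-t)S(t)=1-tq^2S(tq^2).
\end{align*}
Putting $t=q$ yields
\begin{align*}
(1-q)S(q)=1-q^3S(q^3).
\end{align*}
Therefore
\begin{align*}
q^3S(q^3)=1-(1-q)S(q)=1+\frac{1-q}{q}\sigma^*(q).
\end{align*}
Combining this with Step 1 and dividing by $(q^2;q^2)_\infty$ gives the theorem.

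I expect the main obstacle to be Step 1, namely choosing which Heine transformation and which parameter assignment actually lands on a series of the form $S(t)$. Once $\mathcal{K}$ is in that form, the functional equation in Step 2 is routine.
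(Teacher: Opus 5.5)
Your proposal is correct; I checked each computation. With base $q^2$ and $a=0$, $b=q^3$, $c=q^5$, $z=q^2$, Heine's transformation gives the prefactor $(1-q^3)/(q^2;q^2)_\infty$ and the series $\sum_n(q^2;q^2)_nq^{3n}$. So Step 1 lands exactly on the paper's intermediate identity \eqref{before exercise}. The paper reaches it by expanding $1/(1-q^{2n+3})$ geometrically and applying Cauchy's $q$-binomial theorem, which is just the proof of Heine's transformation written out. This half is therefore essentially the same as the paper's.

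The genuine difference is in Step 2. The paper converts $\sum_m(q^2;q^2)_mq^{3m}$ directly into the defining series \eqref{sigma*}. It uses the identity $\sum_n(-xq/y;q^2)_ny^n=\sum_n q^{n^2}x^n/(y;q^2)_{n+1}$ at $y=q^3$, $x=-q^4$, and then separates the $n=1$ term of $\sigma^*(q)$, which produces the extra $1/(q^2;q^2)_\infty$.

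You instead take Cohen's representation \eqref{sigma* repres} as given. That representation is the same identity evaluated at $y=q$, $x=-q^2$. You then connect the arguments $t=q^3$ and $t=q$ with the elementary relation $(1-t)S(t)=1-tq^2S(tq^2)$.

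Your route needs no transformation formula beyond Cohen's. It also exposes something the paper leaves hidden. Your identity $q^3S(q^3)=1-(1-q)S(q)$, combined with \eqref{sigma* repres}, is precisely \eqref{representation 2} of Theorem \ref{sigma* representations}, which the paper proves with two of Fine's transformations. So your argument shows that the theorem is simply \eqref{before exercise} plus \eqref{representation 2}, and it gives a two-line proof of the latter. The paper's route, by contrast, works straight from the definition \eqref{sigma*} and does not depend on Cohen's result.
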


The above results about $\sigma^*(q)$ and $\sigma^*(q^2)$ are not the only outcomes of our study. Our further search also yields the following two new representations of $\sigma^*(q)$, which, to the best of our knowledge, have not explicitly appeared in the literature. The significance of these representations lies in the fact that one provides a new combinatorial interpretation of the coefficients of $\sigma^*(q)$ , while the other yields a new quantum $q$-series identity for $\sigma(q)$.
\begin{theorem}\label{sigma* representations}
We have,
	\begin{align}
	&(1)&\sigma^{*}(q)&=-\sum_{n=0}^{\infty}\frac{q^{2n+1}}{(1-q^{2n+1})}(q^{2n+2};q^2)_\infty. \label{representation 1} \\
	&(2)&\sigma^{*}(q)&=-\frac{q}{1-q}\left(1-\sum_{n=1}^{\infty}(q^2;q^2)_{n-1}q^{3n}\right). \label{representation 2}
	\end{align}
\end{theorem}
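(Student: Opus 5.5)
Both identities will be derived from Cohen's representation \eqref{sigma* repres}, i.e. $\sigma^*(q)=-S(q)$ with
\begin{align*}
S(q):=\sum_{n=0}^{\infty}(q^2;q^2)_nq^{n+1}=\sum_{m=1}^{\infty}(q^2;q^2)_{m-1}q^{m}.
\end{align*}
So it suffices to show that the sum in \eqref{representation 1} and the bracketed expression in \eqref{representation 2} both equal $S(q)$, up to the stated prefactor. Everything is a formal power series identity in $|q|<1$, and all rearrangements below are of absolutely convergent double series.

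For \eqref{representation 1}, I will first write $(q^{2n+2};q^2)_\infty=(q^2;q^2)_\infty/(q^2;q^2)_n$. Next I expand $\frac{q^{2n+1}}{1-q^{2n+1}}=\sum_{m\ge1}q^{(2n+1)m}$ and interchange the order of summation. This gives
\begin{align*}
\sum_{n=0}^{\infty}\frac{q^{2n+1}}{1-q^{2n+1}}(q^{2n+2};q^2)_\infty
=(q^2;q^2)_\infty\sum_{m=1}^{\infty}q^{m}\sum_{n=0}^{\infty}\frac{(q^{2m})^{n}}{(q^2;q^2)_n}.
\end{align*}
By Euler's identity $\sum_{n\ge0}z^n/(q^2;q^2)_n=1/(z;q^2)_\infty$ with $z=q^{2m}$, the inner sum is $1/(q^{2m};q^2)_\infty$. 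Since $(q^2;q^2)_\infty/(q^{2m};q^2)_\infty=(q^2;q^2)_{m-1}$, the whole expression collapses to $\sum_{m\ge1}(q^2;q^2)_{m-1}q^m=S(q)$. Together with \eqref{sigma* repres}, this gives \eqref{representation 1}.

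For \eqref{representation 2}, I will use a telescoping-type manipulation on $S(q)$ itself. Separate the $n=0$ term $q$, and for $n\ge1$ write $(q^2;q^2)_n=(q^2;q^2)_{n-1}-q^{2n}(q^2;q^2)_{n-1}$. Then
\begin{align*}
S(q)=q+\sum_{n=1}^{\infty}(q^2;q^2)_{n-1}q^{n+1}-\sum_{n=1}^{\infty}(q^2;q^2)_{n-1}q^{3n+1}=q+qS(q)-qT(q),
\end{align*}
where $T(q):=\sum_{n\ge1}(q^2;q^2)_{n-1}q^{3n}$. Solving gives $(1-q)S(q)=q(1-T(q))$, so $\sigma^*(q)=-S(q)=-\frac{q}{1-q}(1-T(q))$, which is \eqref{representation 2}.

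Neither step presents a serious obstacle. The only point requiring care is the interchange of summation in part (1), which is justified by absolute convergence for $|q|<1$ (or by working coefficientwise with formal power series). The key idea there is to recognize the inner sum as Euler's generating function.
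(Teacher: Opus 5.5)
Your proof is correct, and it reaches both identities by a more elementary route than the paper.

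For \eqref{representation 1}, the paper applies Heine's transformation \eqref{hiene} to Cohen's series \eqref{sigma* repres}, in base $q^2$ with $a=0$, $b=q$, $c=q^3$, $t=q^2$. You instead expand $q^{2n+1}/(1-q^{2n+1})$ geometrically and resum the inner series with Euler's identity. This is in effect a direct proof of that special case of Heine, and it mirrors the paper's own argument for Theorem \ref{sigma* as generating function}.

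For \eqref{representation 2}, the paper starts from the defining series \eqref{sigma*}. It uses Fine's identity (6.1) to recognize $\sigma^*(q)=-qF(1,0;q:q^2)$, and then applies Fine's first-order relation (4.3). Cohen's series is already $q\sum_{n\ge0}(q^2;q^2)_nq^n=qF(1,0;q:q^2)$, so you skip the (6.1) step entirely. Your telescoping via $(q^2;q^2)_n=(1-q^{2n})(q^2;q^2)_{n-1}$ is exactly the proof of Fine's (4.3) in this case.

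The paper's route derives \eqref{representation 2} from the definition, independently of \eqref{sigma* repres}; its (6.1) step effectively re-proves Cohen's identity. It also places the result inside Fine's transformation framework. Your route gives a short, self-contained argument with no parameter bookkeeping. Note that the paper's stated specialization $b=1$ in (4.3) should read $b=0$, since $b=a$ would make the sum vanish.
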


Identity \eqref{representation 1} in the above theorem immediately yields the following corollary.
\begin{corollary}\label{sigma* combin}
The function $\sigma^{*}(q)$ is the generating function for excess number of partitions having smallest part odd and other parts even and distinct with odd in counting over even in counting.
\end{corollary}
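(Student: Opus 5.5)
The corollary should follow by reading \eqref{representation 1} of Theorem \ref{sigma* representations} as a generating function. I will interpret each summand combinatorially, then account for the sign.

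Expand the $n$-th summand. The factor $q^{2n+1}/(1-q^{2n+1})=\sum_{m\ge1}q^{m(2n+1)}$ generates the odd part $2n+1$ appearing $m\ge1$ times. The product
\[
(q^{2n+2};q^2)_\infty=\prod_{j\ge n+1}(1-q^{2j})
\]
generates distinct even parts, all at least $2n+2$ and hence larger than $2n+1$. Each such even part carries a factor $-1$. So the summand is the signed generating function
\[
\sum_{\pi}(-1)^{\#\text{even parts of }\pi}\,q^{|\pi|}
\]
over partitions $\pi$ whose smallest part is $2n+1$ (with any multiplicity), whose only odd part is $2n+1$, and whose remaining parts are even and distinct. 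Summing over $n\ge0$ covers every partition with odd smallest part and all other parts even and distinct, and each such partition occurs exactly once, since $n$ is determined by the smallest part.

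It remains to track the overall sign. The leading minus sign in \eqref{representation 1} makes each partition contribute $(-1)^{1+e}$, where $e$ is the number of even parts. This is $(-1)^{k}$ with $k=e+1$ the number of distinct parts, counting the odd smallest part once. So a partition contributes $+1$ exactly when $k$ is even, and the coefficient of $q^N$ in $\sigma^*(q)$ is the number of such partitions of $N$ with an even number of distinct parts minus those with an odd number.

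The main obstacle is matching this parity with the phrase ``odd in counting over even in counting'' in the statement. The count of even parts has the opposite parity to $k$, so that reading gives the stated ``odd minus even''. I will therefore adopt the convention that the parity refers to the number of even parts, and write the final statement explicitly as the coefficient identity
\[
-\sigma^*(q)=\sum_{N\ge1}\bigl(p_{e}(N)-p_{o}(N)\bigr)q^N,
\]
where $p_{e}(N)$ and $p_{o}(N)$ are the numbers of such partitions with an even or odd number of even parts. As a sanity check I will compare small coefficients against \eqref{sigma*}. The first term of \eqref{sigma*} is $-q$, and the partition $1$ has zero even parts, an even count, so it contributes $+1$ to $p_e(1)-p_o(1)$ and hence $-1$ to the coefficient of $q$ in $\sigma^*(q)$. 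This is consistent with the convention. In the write-up I will state the convention explicitly rather than leave the phrasing ambiguous.
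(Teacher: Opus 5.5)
Your proposal is correct and is exactly the paper's argument: the paper only says the corollary follows immediately from \eqref{representation 1}, and your expansion of $q^{2n+1}/(1-q^{2n+1})$ and $(q^{2n+2};q^2)_\infty$ supplies the omitted details. Your sign bookkeeping is also right. It makes the ambiguous phrase ``odd in counting'' precise as the parity of the number of (distinct) even parts: the coefficient of $q^N$ in $\sigma^*(q)$ is the number of such partitions with an odd number of even parts minus the number with an even number. This is the only reading consistent with, for example, the coefficient $-1$ of $q$.
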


Before stating the aforementioned quantum $q$-series identity, we note that by a quantum $q$-series identity, we mean an identity that does not hold as an equality of power series inside the unit disk in the classical sense, but does hold when $q$ is a root of unity. Prototypical examples were given by Cohen \cite{cohen}:
\begin{align}\label{quantum1}
\sigma(q)=-\sigma^*\left(q^{-1}\right),
\end{align}
and\footnote{These identities originally contain an extra factor of $2$, which does not appear in \eqref{quantum2} due to our definition of $\sigma^*(q)$ in \eqref{sigma*}.}
\begin{align}\label{quantum2}
\sigma^*\left(q^{-1}\right)=\sum_{n=0}^\infty(q;q)_n,\qquad \sigma^*(q)=-\sum_{n=0}^\infty(q;q^2)_n,
\end{align}
where \eqref{quantum1} is valid when $q$ is a root of unity and  the first identity of \eqref{quantum2}  holds when  $q$ a primitive even root of unity, while the second holds when $q$ a primitive odd root of unity. Note that the above identities are not valid in the unit disk in the classical sense, since none of the right-hand sides defines a convergent power series there. Such quantum $q$-series identities have deep connections to quantum modular forms and quantum topology \cite{bopr, hj, Zagier quantum}. After the appearance of the identities \eqref{quantum1} and \eqref{quantum2}, numerous results of a similar nature have been established by various authors, including Bryson, Ono, Pitman, and Rhoades \cite{bopr}, Folsom, Ki, Vu, and Yang \cite{fk}, Lovejoy \cite{lovejoy} to name a few. Our next result provide such a quantum $q$-series identity for $\sigma(q)$ as an application of \eqref{representation 2}.
\begin{theorem}\label{quantum series}
If $q$ is a primitive odd root of unity, then, we have 
\begin{align}\label{quantum series eqn}
\sigma\left(q^{-1}\right)=\frac{q}{1-q}\left(1-\sum_{n=1}^{\infty}(q^2;q^2)_{n-1}q^{3n}\right).
\end{align}
\end{theorem}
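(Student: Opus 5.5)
The plan is to combine Cohen's quantum identity \eqref{quantum1} with representation \eqref{representation 2} of Theorem \ref{sigma* representations}. Both sides of \eqref{quantum series eqn} are then expressions in $\sigma^*$, one evaluated inside the unit disk and one at a root of unity. The difficulty is that \eqref{representation 2} is an identity of power series for $|q|<1$, whereas \eqref{quantum1} lives at roots of unity. So I must first make sense of every series at a primitive odd root of unity $\zeta$, and then transfer \eqref{representation 2} to $\zeta$.

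First I will check that both sides are finite sums at $\zeta$, where $\zeta$ has odd order $m$. Since $\zeta^2$ is also a primitive $m$-th root of unity, $(\zeta^2;\zeta^2)_{n-1}=0$ for $n-1\ge m$. The sum on the right of \eqref{quantum series eqn} is therefore finite. Likewise $\sigma^*(q)=-\sum_{n\ge0}(q;q^2)_n$ from \eqref{quantum2} terminates at $\zeta$, because $(\zeta;\zeta^2)_n$ contains the factor $1-\zeta^{2j+1}$, which vanishes when $2j+1\equiv0\pmod m$. This form is exactly the meaning of $\sigma^*$ at odd roots of unity in Cohen's quantum identities. Hence $\sigma(\zeta^{-1})=-\sigma^*(\zeta)=\sum_{n\ge0}(\zeta;\zeta^2)_n$. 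It therefore suffices to prove
\begin{align*}
-\sum_{n\ge0}(\zeta;\zeta^2)_n=-\frac{\zeta}{1-\zeta}\Bigl(1-\sum_{n\ge1}(\zeta^2;\zeta^2)_{n-1}\zeta^{3n}\Bigr).
\end{align*}

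For this I will use the standard mechanism behind \eqref{quantum2}. Both $-\sum_{n\ge0}(q^2;q^2)_nq^{n+1}$, which is Cohen's \eqref{sigma* repres}, and the right side of \eqref{representation 2} are "Kontsevich--Zagier-type" series. Their values at roots of unity are determined by their radial limits, or equivalently by their asymptotic expansions, as $q\to\zeta$ inside the disk. This is the same sense in which $\sigma^*(q)$ itself, given by \eqref{sigma*}, has a well-defined limiting value matching $-\sum(\zeta;\zeta^2)_n$. The cleanest concrete route avoids analysis and proves the finite identity directly. Set $F(q)=\sum_{n\ge1}(q^2;q^2)_{n-1}q^{3n}$. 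Using $(q^2;q^2)_n=(q^2;q^2)_{n-1}-q^{2n}(q^2;q^2)_{n-1}$, telescoping gives
\[
F(q)=\sum_{n\ge1}q^{n}\bigl((q^2;q^2)_{n-1}-(q^2;q^2)_n\bigr),
\]
which relates $F$ to Cohen's series $S(q)=\sum_{n\ge0}(q^2;q^2)_nq^{n+1}$ through $F=S-(1-q)S/q\cdot(\ldots)$. This algebraic relation is exactly the derivation of \eqref{representation 2} from \eqref{sigma* repres}, and it holds termwise for truncated sums at $\zeta$, since all these series terminate. It reduces the claim to the root-of-unity evaluation $S(\zeta)=\sum_{n\ge0}(\zeta;\zeta^2)_n$. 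This is precisely the finite form of the second identity in \eqref{quantum2}, which Cohen obtains from \eqref{sigma* repres}.

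The main obstacle is this last step: justifying that the termwise manipulations at $\zeta$ are legitimate. The boundary terms in the telescoping must vanish once $n-1\ge m$, and the evaluation of $S(\zeta)$ must agree with the quantity $-\sigma^*(\zeta)$ appearing in \eqref{quantum1}. I would handle this by doing every manipulation on partial sums up to $N=m$, where $(\zeta^2;\zeta^2)_N=0$ kills all tails. I would then invoke Cohen's result \eqref{quantum2} for the odd-root case together with \eqref{quantum1}.
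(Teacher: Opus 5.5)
Your argument is correct, granting \eqref{quantum1} and \eqref{quantum2} as the paper does, but it takes a genuinely different and much shorter route.

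The paper works directly at the root of unity with two finite identities from Lovejoy's paper: his root-of-unity identity (2.3) and a terminating case of Sears' transformation. With these it shows $\sum_{n\ge0}(q^2;q^2)_nq^{3n}=q^{-3}\sum_{n\ge0}(q^{-1};q^2)_n$. It then peels off the factor $1-q^{-1}$ to reach $\sum_{n\ge0}(q;q^2)_n$, and only then invokes \eqref{quantum2} and \eqref{quantum1}.

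You instead observe that the right-hand side is an index shift of Cohen's terminating series $S(q)=\sum_{n\ge0}(q^2;q^2)_nq^{n+1}$. Splitting your displayed sum for $F$ gives $F=S-(S-q)/q=1-\frac{1-q}{q}S$, that is, $\frac{q}{1-q}(1-F)=S$. This holds verbatim at $q=\zeta\neq1$, because $(\zeta^2;\zeta^2)_n=0$ for $n\ge m$, so all sums are finite and the shift is exact. Two repairs to your write-up:
\begin{itemize}
\item Replace your garbled relation ``$F=S-(1-q)S/q\cdot(\ldots)$'' by $F=1-\frac{1-q}{q}S$. The step is a split-and-shift, not a telescoping.
\item Delete the radial-limit paragraph; you do not justify its claims, and you do not need them.
\end{itemize}
Since $-S(\zeta)$ is exactly the value \eqref{sigma* repres} assigns to $\sigma^*(\zeta)$, \eqref{quantum1} finishes the proof at once. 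Your detour through $\sum(\zeta;\zeta^2)_n$ and \eqref{quantum2} is therefore unnecessary.

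Your route shows the theorem is just \eqref{representation 2} read at $\zeta$, plus Cohen's identity; and \eqref{representation 2} is itself a trivial rewriting of \eqref{sigma* repres}, with no Fine transformations needed. The paper's Lovejoy--Sears computation, combined with your relation, in effect re-derives the second identity of \eqref{quantum2}, which the paper then invokes anyway.

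One caveat affects your proof and the paper's equally. With the paper's normalization of $\sigma^*$ (no factor $2$ in \eqref{sigma*}), Cohen's identity reads $\sigma(q)=-2\sigma^*(q^{-1})$, not \eqref{quantum1} as printed. For instance, take $\zeta=e^{2\pi i/3}$. Cohen's terminating form $1+\sum_{n\ge0}(-1)^nq^{n+1}(q;q)_n$ gives $\sigma(\zeta^{-1})=4-2\zeta$; so does the defining series \eqref{sigma}, which converges at odd roots of unity. The right-hand side of the theorem, however, equals $S(\zeta)=2-\zeta$. So what both arguments actually establish is the identity with $\frac{2q}{1-q}$ in place of $\frac{q}{1-q}$.
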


We end this subsection by deriving the main asymptotic terms for partition function $k(n)$. A classical example of such an asymptotic formula  is the celebrated result of Hardy and Ramanujan \cite{hr} for the unrestricted partition function $p(n)$, which counts the number of partitions of $n$. By developing the Circle method, they obtained an asymptotic expansion for $p(n)$, whose leading term is given by
\begin{align*}
p(n)\sim\frac{1}{4n\sqrt{3}}e^{\pi\sqrt{\frac{2n}{3}}}\qquad \mathrm{as}\quad n\to\infty.
\end{align*}


The main asymptotic term of the partition function $k(n)$, defined in \eqref{kn}, is given in the next result.
\begin{theorem}\label{asymptotic of kn}
As $n\to\infty$, we have
	\begin{align*}
k(2n-1)\sim \frac{1}{4n\sqrt{3}} e^{\pi \sqrt{\frac{2n}{3}}} \qquad\mathrm{and}\qquad	k(2n)\sim \frac{\pi}{12\sqrt{2}n^{\frac{3}{2}}}e^{\pi\sqrt{\frac{2n}{3}}}.
\end{align*}
\end{theorem}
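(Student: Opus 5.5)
We prove Theorem \ref{asymptotic of kn} starting from Theorem \ref{sigma* as generating function}, which gives
\begin{align*}
\mathcal{K}(q)=\frac{1}{(q^2;q^2)_\infty}\left(1+\frac{1-q}{q}\sigma^*(q)\right).
\end{align*}
We split $\mathcal{K}(q)$ into its even and odd parts in $q$, because the two residue classes of $n$ behave differently. Write $\sigma^*(q)=E(q^2)+qO(q^2)$. Then
\begin{align*}
1+\frac{(1-q)\sigma^*(q)}{q}=1+O(q^2)-E(q^2)+q^{-1}E(q^2)-qO(q^2).
\end{align*}
Hence the odd-index coefficients have generating function
\begin{align*}
\sum_{n\ge1} k(2n-1)q^{2n-1}=\frac{q^{-1}E(q^2)-qO(q^2)}{(q^2;q^2)_\infty},
\end{align*}
and the even-index ones have generating function $\frac{1+O(q^2)-E(q^2)}{(q^2;q^2)_\infty}$. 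After replacing $q^2$ by $q$, the task becomes finding the asymptotics of the coefficients of $F(q)/(q;q)_\infty$ for explicit $F$ built from $E$ and $O$.

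Near $q=1$ we apply Ingham's Tauberian theorem, or a saddle-point argument with Wright's circle method if monotonicity is awkward. The key input is the behaviour of $\sigma^*(q)$ as $q\to1^-$. Cohen's quantum identity \eqref{quantum2} gives $\sigma^*(1)=-\sum(1;1)_n=-1$, and Zagier's asymptotic expansion of $\sigma^*$ near roots of unity shows that $\sigma^*(e^{-t})$ has a full power-series expansion in $t$. This also follows directly from \eqref{sigma* repres}, since $\sum_n (q^2;q^2)_n q^{n+1}$ has the standard "Kontsevich-type" expansion at $q=1$. Near $q=-1$, \eqref{quantum2} and the second-representation expansion again give finite limiting values.

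These values determine the leading terms. For the odd part, we need $E(1)$ and $O(1)$, which come from $\sigma^*(\pm1)$; they should give a numerator tending to $1$ at $q\to1$. The coefficients then match those of $1/(q;q)_\infty$, which gives $k(2n-1)\sim p(n)\sim \frac{1}{4n\sqrt3}e^{\pi\sqrt{2n/3}}$.

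For the even part, the numerator $1+O(1)-E(1)$ should vanish, since the claimed asymptotic is smaller by a factor $n^{-1/2}$. We then need the next term: $F(e^{-t})\sim c\,t$. Multiplying by $1/(e^{-t};e^{-t})_\infty\sim\sqrt{t/2\pi}\,e^{\pi^2/6t}$ and applying the standard lemma, in which a factor $t$ contributes $\pi/\sqrt{6n}$, gives
\begin{align*}
p(n)\cdot c\,\frac{\pi}{\sqrt{6n}}.
\end{align*}
Matching this with $\frac{\pi}{12\sqrt2 n^{3/2}}e^{\pi\sqrt{2n/3}}$ forces $c\cdot\frac{1}{4\sqrt3}\cdot\frac{1}{\sqrt6}=\frac{1}{12\sqrt2}$, that is, $c=1$. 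Two things must be verified here: the derivative of the numerator at $t=0$ equals $1$, and the minor arcs are negligible.

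The main obstacle is computing the first-order Taylor data of $E$ and $O$ at $q=1$. That is, we need $\sigma^*(e^{-t})$ and $\sigma^*(-e^{-t})$ to order $t$. I would extract these from \eqref{representation 2} and from $\sigma^*(q)=-\sum(q;q^2)_n$ interpreted as an asymptotic expansion, differentiating termwise as in Zagier's treatment of $\sigma$.

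On the minor arcs, $1/(q;q)_\infty$ is exponentially smaller away from $q=1$. The numerator grows at most polynomially there, since $\sigma^*$ has at most polynomial growth towards roots of unity by its mock Maass theta structure. This growth bound is the second delicate point: it is needed to justify either Ingham's theorem (positivity holds because $k(n)\ge0$, though monotonicity within each parity class needs a small argument) or a direct circle-method bound.
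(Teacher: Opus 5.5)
\textbf{Gap.} Your parity splitting is the paper's ($\mathcal{K}_e$, $\mathcal{K}_o$), and Proposition \ref{ITT} is the right tool. However, the proposal never proves the analytic input on which everything rests. You obtain $c=1$ by matching against the asymptotic you are trying to prove, and you only assert that the odd numerator ``should'' tend to $1$. As written, this is circular.

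What must actually be shown is the content of Lemma \ref{asymptotic of sigma*}: with $q=e^{-z}$, $\sigma^*(q)\to-1$ and $\sigma^*(-q)=1-3z+o(z)$ as $z\to0$ in $R_\Delta$. These give
\begin{align*}
\tfrac12\Big(2+\tfrac{1-q}{q}\sigma^*(q)-\tfrac{1+q}{q}\sigma^*(-q)\Big)=2z+o(z),\qquad
\tfrac12\Big(\tfrac{1-q}{q}\sigma^*(q)+\tfrac{1+q}{q}\sigma^*(-q)\Big)=1+o(1),
\end{align*}
which become your $c=1$ and your limit $1$ after $q^2\mapsto q$. The paper gets these expansions from the Andrews--Dyson--Hickerson indefinite theta series via the two-dimensional Euler--Maclaurin formula (Proposition \ref{EMS}).

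For real $z$, your suggestion of using \eqref{sigma* repres} does work and is more elementary. In $\sigma^*(-q)=\sum_{n\ge0}(-1)^n(q^2;q^2)_nq^{n+1}$, keep the terms $n\le2$ and bound the tail by $(q^2;q^2)_3/(1-q)=O(z^2)$; this gives $1-3z+O(z^2)$. But you must not use \eqref{quantum2} near $q=-1$. There $-\sum_n(q;q^2)_n$ diverges. The first identity of \eqref{quantum2}, as printed, gives $\sum_n(-1;-1)_n=3$, whereas the radial limit (indeed the value of the convergent series \eqref{sigma*} at $q=-1$) is $1$. Feeding in $3$ would make your even numerator tend to $-2$ rather than $0$.

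The Tauberian hypotheses are also left open. Nonnegativity of $k(n)$ does not give monotonicity of $k(2n)$ and $k(2n-1)$. The paper needs a separate lemma, $(1-q^2)\big(\mathcal{K}(q)+q^8\big)\succeq0$, proved by explicitly exhibiting partitions counted by $T(3n+5)$.

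With Proposition \ref{ITT}, no information at other roots of unity is required. What is required, and is not supplied by a polynomial growth bound for $\sigma^*$, is the upper bound in the cones near $z=0$. There the factor $1/(q;q)_\infty$ alone has size $|z|^{1/2}e^{\frac{\pi^2}{6}\mathrm{Re}(1/z)}$. So for the even part (with $\beta=\frac32$) you need the cancellation $F(e^{-z})=O(|z|)$ for complex $z\to0$ in $R_\Delta$, at least near the positive real axis, not only for real $z$. The elementary tail bound above does not carry over directly off the real axis, because for complex $z$ the factors $|1-q^{2k}|$ can exceed $1$. This is why the paper works with the indefinite theta representation on full cones.
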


\subsection{Smallest Part analogues for Partitions with parts separated by parity}\label{spt functions}
As we discussed above, partitions associated with the parity of parts have been studied extensively in the literature. However, their smallest part partition function analogues have not been previously considered in the literature. In this subsection, we initiate a systematic study of these analogues and show that their generating functions are mixed mock modular forms. 

 Recall that smallest part partition function $\operatorname{spt}(n)$, introduced by Andrews \cite{andrews spt}, counts the total number of occurrences of the smallest parts among all the partitions of $n$. Note that
 \begin{align*}
 	&\sum_{n=1}^\infty\mathrm{spt}(n)q^n=\sum_{n=1}^\infty\frac{q^{n}}{(1-q^{n})^2(q^{n+1};q)_\infty}.
 \end{align*}
 Since its appearance, spt$(n)$ has emerged as one of the central objects in the theory of partitions exhibiting remarkable arithmetic and asymptotic properties analogous to those of the unrestricted partition function $p(n)$ (see, for example, \cite{aa, bringmann duke, dy, fo, o}, to name a few). While the generating function of $p(n)$ is essentially modular, Bringmann \cite{bringmann duke} proved that the generating function of $\operatorname{spt}(n)$ is the holomorphic part of a weight $3/2$ harmonic Maass form, or equivalently, a mock modular form whose shadow is the Dedekind eta-function. 
 
 
 Motivated by these interesting connections, we show that the generating functions of the parity-restricted smallest-part functions introduced in this subsection also admit natural descriptions in terms of mock modular forms. 
 
 Our first such spt-function is the one corresponding to Andrews' function $\mathcal{EO}(n)$, introduced above. More precisely, let  $\mathrm{spt}_{\mathcal{EO}}(n)$ denote the number of smallest odd parts in all the partitions counted by\footnote{Note that the partitions of $n$ counted by $\mathcal{EO}(n)$ that contain no odd parts do not contribute to $\mathrm{spt}_{\mathcal{EO}}(n)$.}
  $\mathcal{EO}(n)$. Its generating function can be given by
 \begin{align}\label{sptEO}
\mathcal{S}_{\mathcal{EO}}(q):=\sum_{n=1}^\infty\mathrm{spt}_{\mathcal{EO}}(n)q^n=\sum_{n=0}^{\infty}\frac{q^{2n+1}}{(1-q^{2n+1})(q^2;q^2)_n(q^{2n+1};q^2)_\infty}.
 \end{align}
 
Quite surprisingly, $\mathcal{S}_{\mathcal{EO}}(q)$ can be expressed in terms of $\mathcal{U}(q)$, thereby showing that it is a mix mock modular form. More precisely,
\begin{theorem}\label{sptEO thm}
We have,
\begin{align}
 \mathcal{S}_{\mathcal{EO}}(q)=\frac{1}{(q^2;q^2)_\infty}\mathcal{U}(q).\nonumber
\end{align}
Consequently, $(q^2;q^2)_\infty\mathcal{S}_{\mathcal{EO}}(q)$ is a mock modular form of weight $3/2$.
\end{theorem}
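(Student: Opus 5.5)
Multiplying \eqref{sptEO} by $(q^2;q^2)_\infty$ and using $(q^2;q^2)_\infty/(q^2;q^2)_n=(q^{2n+2};q^2)_\infty$, the claim becomes
\begin{align*}
\sum_{n=0}^{\infty}\frac{q^{2n+1}(q^{2n+2};q^2)_\infty}{(1-q^{2n+1})(q^{2n+1};q^2)_\infty}=\mathcal{U}(q).
\end{align*}
Rather than attacking the definition \eqref{u defn} directly, I will aim at the representation \eqref{new representation of U} of Theorem \ref{U in terms of difference of odd Ferrers}, namely $\mathcal{U}(q)=\sum_{n\ge0}\frac{(q^2;q^2)_n}{(q;q^2)_{n+1}}q^{n+1}$, which may be assumed.

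The plan is to write the summand on the left as a ratio of infinite products times finite products. We have $(q^{2n+2};q^2)_\infty=(q^2;q^2)_\infty/(q^2;q^2)_n$ and $(q^{2n+1};q^2)_\infty=(q;q^2)_\infty/(q;q^2)_n$. The left side therefore equals
\begin{align*}
\frac{(q^2;q^2)_\infty}{(q;q^2)_\infty}\sum_{n\ge0}\frac{(q;q^2)_n\,q^{2n+1}}{(q^2;q^2)_n(1-q^{2n+1})}
=\frac{(q^2;q^2)_\infty}{(q;q^2)_\infty}\,q\sum_{n\ge0}\frac{(q;q^2)_n^2}{(q^2;q^2)_n(q;q^2)_{n+1}}\,q^{2n}.
\end{align*}
Here I used $(q;q^2)_n(1-q^{2n+1})=(q;q^2)_{n+1}$, so that $\frac{(q;q^2)_n}{1-q^{2n+1}}=\frac{(q;q^2)_n^2}{(q;q^2)_{n+1}}$.

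With base $q^2$, this sum is a ${}_3\phi_2$-type series $\sum_n\frac{(q;q^2)_n(q;q^2)_n}{(q^2;q^2)_n(q^3;q^2)_n}\frac{q^{2n}}{1-q}$. I will transform it with Heine's second or third transformation, or equivalently the limiting case of Sears/Fine's transformation that sends $\sum \frac{(a)_n(b)_n}{(q)_n(c)_n}t^n$ with $t=q^2$ ($=$ base) into a series carrying the prefactor $\frac{(q^2;q^2)_\infty}{(q;q^2)_\infty}$ inverted. Concretely, Heine's transformation $\,{}_2\phi_1(a,b;c;z)=\frac{(b,az)_\infty}{(c,z)_\infty}{}_2\phi_1(c/b,z;az;b)$ with $q\to q^2$, $a=b=q$, $c=q^3$, $z=q^2$ gives the prefactor $\frac{(q,q^3;q^2)_\infty}{(q^3,q^2;q^2)_\infty}=\frac{(q;q^2)_\infty}{(q^2;q^2)_\infty}$, exactly cancelling the outside factor. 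The remaining series is ${}_2\phi_1(q^2,q^2;q^3;q^2,q)=\sum_n\frac{(q^2;q^2)_n}{(q^3;q^2)_n}q^n$. Multiplying by the $q/(1-q)$ left over, this becomes $\sum_n\frac{(q^2;q^2)_n}{(q;q^2)_{n+1}}q^{n+1}$, which is \eqref{new representation of U}. The mock modularity statement then follows from the result of Alfes, Bringmann and Lovejoy quoted after Theorem \ref{oebar thm}.

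The main obstacle is bookkeeping: choosing the right Heine iterate and parameters so that the infinite-product prefactor cancels exactly. I will first try the first Heine transformation as above. If the parameters misalign, I will fall back to the third Heine transformation, or to Fine's identity applied to $F(a,b;t)$ in base $q^2$. Convergence ($|q|<1$, $|z|,|b|<1$) is unproblematic.
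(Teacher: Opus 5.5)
Your proof is correct, and it follows a genuinely different and lighter route than the paper's. Your reduction
$(q^2;q^2)_\infty\mathcal{S}_{\mathcal{EO}}(q)=\frac{(q^2;q^2)_\infty}{(q;q^2)_\infty}\cdot\frac{q}{1-q}\,{}_2\phi_1(q,q;q^3;q^2,q^2)$ is right. The sum is a ${}_2\phi_1$, not a ${}_3\phi_2$ as you wrote. Heine's transformation \eqref{hiene} in base $q^2$ with $a=b=q$, $c=q^3$, $t=q^2$ produces exactly the prefactor $(q;q^2)_\infty/(q^2;q^2)_\infty$. It leaves $\frac{q}{1-q}\sum_{n\ge0}\frac{(q^2;q^2)_n}{(q^3;q^2)_n}q^n$, which is \eqref{new representation of U}, so your fallback plans are not needed. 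Relying on \eqref{new representation of U} is not circular: the paper derives it from \eqref{u defn} by one application of \eqref{2phi1}, independently of the present theorem.

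You can shorten the argument further. Applying \eqref{2phi1} directly to ${}_2\phi_1(q,q;q^3;q^2,q^2)$ gives $(q^2;q^2)_\infty\mathcal{S}_{\mathcal{EO}}(q)=\frac{q}{(1-q)^2}\sum_{n\ge0}\frac{(-1)^n(q;q^2)_nq^{n^2+2n}}{(q^3;q^2)_n^2}=\sum_{n\ge0}\frac{(-1)^nq^{(n+1)^2}}{(1-q^{2n+1})(q;q^2)_{n+1}}=\mathcal{U}(q)$ in a single step.

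The paper instead treats the same sum as a limiting case of Watson's $q$-analogue of Whipple's theorem, with $a=q^2$, $d=f=g=q$ and $e,N\to\infty$. This yields the bilateral Appell--Lerch-type sum $\frac{(q;q^2)_\infty}{(q^2;q^2)_\infty^2}\sum_{n\in\mathbb{Z}}\frac{q^{2n^2+3n+1}}{(1-q^{2n+1})^2}$. The paper then identifies this with $N^o(0,-1;1;q)=\mathcal{U}(q)$ through the Alfes--Bringmann--Lovejoy identity and \eqref{N in terms of U1}.

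Your route uses only classical ${}_2\phi_1$ transformations and avoids both the ${}_8\phi_7$ machinery and the external bilateral identity. Its second half is precisely the manipulation the paper later uses in the proof of Theorem \ref{sptOE thm}. What the paper's heavier route buys is the bilateral representation of $\mathcal{U}(q)$ recorded in the subsequent remark. That is the form from which Bringmann--Lovejoy--Osburn build the harmonic Maass completion. For the ``consequently'' clause, both arguments simply invoke the known mock modularity of $\mathcal{U}(q)$.
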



Note Theorems \ref{oebar thm} and \ref{sptEO thm} together imply that
\begin{align}\label{eoeobar}
\mathcal{S}_{\mathcal{EO}}(q^2)=q\ \overline{oe}(q).
\end{align} 
Combinatorial interpretation of the identity \eqref{eoeobar} leads to the following interesting consequence.
\begin{corollary}\label{oesoe}
	Let $\overline{\mathcal{EO}}_1(n)$ denote the number of partitions of $n$ counted by $\mathcal{EO}(n)$ where all parts appear even number of times and $\mathcal{S}_{\overline{\mathcal{EO}}_1}(n)$ counts the smallest odd part present in all the partitions of $n$ counted by $\overline{\mathcal{EO}}_1(n)$. Then, for $n\geq1$, we have
\begin{align}
\overline{\mathcal{OE}}(n)=\mathcal{S}_{\overline{\mathcal{EO}}_1}(n+1).\nonumber
\end{align}
\end{corollary}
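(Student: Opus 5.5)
The plan is to read off the combinatorial meaning of both sides of \eqref{eoeobar}, $\mathcal{S}_{\mathcal{EO}}(q^2)=q\,\overline{\mathrm{oe}}(q)$, which we take as given from Theorems \ref{oebar thm} and \ref{sptEO thm}. Comparing coefficients of $q^{n+1}$ gives $\overline{\mathcal{OE}}(n)=[q^{n+1}]\,\mathcal{S}_{\mathcal{EO}}(q^2)$ for every $n\ge 1$, since $\overline{\mathrm{oe}}(q)=\sum_{n\ge1}\overline{\mathcal{OE}}(n)q^n$ by \eqref{oe bar generating}. It therefore remains to show that
\[
\mathcal{S}_{\mathcal{EO}}(q^2)=\sum_{m\ge1}\mathcal{S}_{\overline{\mathcal{EO}}_1}(m)\,q^m .
\]

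For this, substitute $q\mapsto q^2$ in \eqref{sptEO}:
\[
\mathcal{S}_{\mathcal{EO}}(q^2)=\sum_{n=0}^{\infty}\frac{q^{4n+2}}{(1-q^{4n+2})(q^4;q^4)_n(q^{4n+2};q^4)_\infty}.
\]
Interpret the $n$-th summand as follows.
\begin{itemize}
\item The factor $1/(q^4;q^4)_n$ generates pairs of even parts $2j,2j$ with $1\le j\le n$, i.e.\ even parts smaller than $2n+1$, each occurring an even number of times.
\item The factor $1/(q^{4n+2};q^4)_\infty$ generates pairs of odd parts $2k+1,2k+1$ with $k\ge n$.
\item The factor $q^{4n+2}/(1-q^{4n+2})=\sum_{r\ge1}q^{(4n+2)r}$ forces at least one pair $2n+1,2n+1$, so $2n+1$ is the smallest odd part. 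The extra geometric weighting records the marked pair.
\end{itemize}
So the summand enumerates partitions in the $\overline{\mathcal{EO}}_1$ class, meaning those counted by $\mathcal{EO}$ with all multiplicities even, whose smallest odd part is $2n+1$. This is exactly as in the original $\mathrm{spt}_{\mathcal{EO}}$ interpretation, where doubling every part of an $\mathcal{EO}$ partition with smallest odd part $2n+1$ is transported by $q\mapsto q^2$.

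The counting convention is the step I expect to need care. In \eqref{sptEO}, the factor $1/(1-q^{2n+1})$ combined with the $q^{2n+1}$ inside $(q^{2n+1};q^2)_\infty^{-1}$ weights each partition by the multiplicity of its smallest odd part. After $q\mapsto q^2$, a partition in which $2n+1$ occurs $2r$ times is counted $r$ times, i.e.\ once per pair of smallest odd parts.

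So I will fix the reading of ``counts the smallest odd part'' in the definition of $\mathcal{S}_{\overline{\mathcal{EO}}_1}$ to be the number of pairs, equivalently the multiplicity of the corresponding $\mathrm{spt}_{\mathcal{EO}}$ partition under halving. I will state this explicitly. With it, the bijection ``double every part'' shows
\[
[q^{m}]\,\mathcal{S}_{\mathcal{EO}}(q^2)=\mathcal{S}_{\overline{\mathcal{EO}}_1}(m),
\]
and combining with the coefficient comparison above at $m=n+1$ finishes the proof.
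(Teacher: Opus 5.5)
Your proposal is correct and follows the paper's own route: read \eqref{eoeobar}, $\mathcal{S}_{\mathcal{EO}}(q^2)=q\,\overline{\mathrm{oe}}(q)$, coefficientwise, and interpret $q\mapsto q^2$ as doubling the multiplicity of every part of an $\mathcal{EO}$ partition. Your explicit convention of counting \emph{pairs} of smallest odd parts is the one that makes the identity true, a point the paper leaves implicit: counting occurrences already fails at $n=1$, where $\{1,1\}$ has two smallest odd parts but $\overline{\mathcal{OE}}(1)=1$. You might also write ``doubling the multiplicity of every part'' rather than ``doubling every part'', to avoid suggesting that parts are multiplied by $2$.
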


We now consider spt-analogue of the partition function $\mathcal{OE}(n)$. Namely, $\mathrm{spt}_{\mathcal{OE}}(n)$ denote the number of smallest even parts in all the partitions generated by $q \mathrm{oe}(q)$. Note that $q \mathrm{oe}(q)$ generates those partitions of $n$ where all odd parts are less than each even part and having atleast one even part. Clearly, 
\begin{align}\label{sptOE}
	\mathcal{S}_{\mathcal{OE}}(q):=\sum_{n=1}^\infty\mathrm{spt}_{\mathcal{OE}}(n)q^n=\sum_{n=1}^{\infty}\frac{q^{2n}}{(1-q^{2n})(q;q^2)_n(q^{2n};q^2)_\infty}.
\end{align}

Our next result shows that $\mathcal{S}_{\mathcal{OE}}(q)$ is also a mixed mock modular form.
\begin{theorem}\label{sptOE thm}
We have,
\begin{align}\label{sptOE thm eqn}
\mathcal{S}_{\mathcal{OE}}(q)
=\frac{1}{(q;q^2)_\infty}\mathcal{U}(q)-\frac{1}{(q^2;q^2)_\infty}\frac{E_2(2\tau)-E_2(\tau)}{24},
\end{align}
where $E_2(\tau)=1-24\sum_{n\geq1}\frac{q^n}{(1-q^n)^2},\ q=e^{2\pi i\tau},$ is the quasi-modular form.
\end{theorem}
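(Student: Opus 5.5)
Starting from \eqref{sptOE}, I would first factor out $1/(q^2;q^2)_\infty$ by writing $1/(q^{2n};q^2)_\infty=(q^2;q^2)_{n-1}/(q^2;q^2)_\infty$. This gives
\begin{align*}
(q^2;q^2)_\infty\,\mathcal{S}_{\mathcal{OE}}(q)=\sum_{n=1}^\infty\frac{q^{2n}(q^2;q^2)_{n-1}}{(1-q^{2n})(q;q^2)_n}.
\end{align*}
The term $\frac{1}{(q^2;q^2)_\infty}\frac{E_2(2\tau)-E_2(\tau)}{24}$ equals $\frac{1}{(q^2;q^2)_\infty}\sum_{n\ge1}\big(\frac{q^n}{(1-q^n)^2}-\frac{q^{2n}}{(1-q^{2n})^2}\big)$, i.e. $\frac{1}{(q^2;q^2)_\infty}\sum_{n\ge1}\frac{q^{2n-1}}{(1-q^{2n-1})^2}$. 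I would therefore aim to prove
\begin{align*}
\sum_{n=1}^\infty\frac{q^{2n}(q^2;q^2)_{n-1}}{(1-q^{2n})(q;q^2)_n}=\frac{(q^2;q^2)_\infty}{(q;q^2)_\infty}\,\mathcal{U}(q)-\sum_{n\ge1}\frac{q^{2n-1}}{(1-q^{2n-1})^2}.
\end{align*}

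The natural tool is a two-variable deformation. I would introduce
\[
F(z)=\sum_{n\ge1}\frac{q^{2n}(q^2;q^2)_{n-1}}{(zq;q^2)_n(q/z;q^2)_n}
\]
or a similar rank-type series, since a factor $1/(1-q^{2n})$ is typically produced by differentiating or by a limiting case of a ${}_3\phi_2$/Watson--Whipple transformation. The cleaner route, mirroring what must have been done for Theorem \ref{sptEO thm}, is to split
\[
\frac{q^{2n}}{1-q^{2n}}=\frac{1}{1-q^{2n}}-1 .
\]
Then $\sum_{n}\frac{(q^2;q^2)_{n-1}}{(1-q^{2n})(q;q^2)_n}q^{2n}$ separates into a sum like $\sum_n\frac{(q^2;q^2)_{n-1}q^{2n}}{(q;q^2)_n}$, handled by the $q$-binomial/Heine summation, plus a sum with $(q^2;q^2)_{n-1}/(q^2;q^2)_n$-type structure. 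Applying Heine's second or third transformation (base $q^2$, $a=q^2$, $c=q$, $b\to$ a parameter sent to a limit) should convert the whole into $\frac{(q^2;q^2)_\infty}{(q;q^2)_\infty}$ times a series of the shape \eqref{new representation of U}, namely $\sum_n\frac{(q^2;q^2)_n}{(q;q^2)_{n+1}}q^{n+1}$. By Theorem \ref{U in terms of difference of odd Ferrers} this series is $\mathcal{U}(q)$. The leftover "boundary" terms arising from the limiting process should assemble into the Lambert series $\sum q^{2n-1}/(1-q^{2n-1})^2$, for instance via $\frac{\partial}{\partial z}$ at $z=1$ of $(zq;q^2)_\infty^{-1}(q/z;q^2)_\infty^{-1}$-type products, or via the identity $\sum_n \frac{q^{2n-1}}{(1-q^{2n-1})^2}=\sum_n\frac{nq^n}{1-q^{2n}}$.

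The main obstacle is the second step: choosing the correct transformation so that the non-mock piece emerges exactly as this Lambert series. My first attempt would be to insert a parameter $z$ into the term $1/(1-q^{2n})$, replacing it by $1/(1-zq^{2n})$. I would then apply Heine's transformation in $z$, expand around $z=1$, and match the $(1-z)^{-1}$ and constant terms. As a check, I would compare $q$-expansions to several orders before finalizing.
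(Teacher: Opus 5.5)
Your reduction is correct and is the paper's first step. Writing $1/(q^{2n};q^2)_\infty=(q^2;q^2)_{n-1}/(q^2;q^2)_\infty$ and $\frac{E_2(2\tau)-E_2(\tau)}{24}=\sum_{n\ge1}\frac{q^{2n-1}}{(1-q^{2n-1})^2}$ turns the theorem into exactly your displayed identity.

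The gap is the core step. It is asserted (``should convert'', ``should assemble'') rather than carried out, and the one concrete manipulation you give fails. The split $\frac{q^{2n}}{1-q^{2n}}=\frac{1}{1-q^{2n}}-1$ produces two divergent series, because $\frac{(q^2;q^2)_{n-1}}{(q;q^2)_n}\to\frac{(q^2;q^2)_\infty}{(q;q^2)_\infty}\neq0$. It also does not yield the sum $\sum_n\frac{(q^2;q^2)_{n-1}q^{2n}}{(q;q^2)_n}$ you name. More fundamentally, Heine's transformations act on ${}_2\phi_1$'s, whereas after $n\to n+1$ your sum is
\begin{align*}
\frac{q^2}{(1-q)(1-q^2)}\sum_{n\ge0}\frac{(q^2;q^2)_n(q^2;q^2)_n}{(q^4;q^2)_n(q^3;q^2)_n}q^{2n},
\end{align*}
a nonterminating ${}_3\phi_2$. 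The factor $1/(1-q^{2n})$ is exactly what lifts it out of Heine's range.

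Your $z$-deformation does not help as described. The series $\sum_n\frac{q^{2n}(q^2;q^2)_{n-1}}{(1-zq^{2n})(q;q^2)_n}$ is analytic at $z=1$, so there is no $(1-z)^{-1}$ term to match. A workable version is to expand $1/(1-q^{2n})$ geometrically and apply Heine's transformation termwise. This gives $\sum_{N\ge1}\frac{(q;q^2)_N}{(q^2;q^2)_N}q^N U_N$, where $U_N$ is the $N$-th partial sum of \eqref{new representation of U} and $U_0=0$. By the $q$-binomial theorem, you would then still need
\begin{align*}
\sum_{N\ge0}\frac{(q;q^2)_N}{(q^2;q^2)_N}q^N\big(\mathcal{U}(q)-U_N\big)=\sum_{n\ge1}\frac{q^{2n-1}}{(1-q^{2n-1})^2},
\end{align*}
a sum-of-tails identity that is equivalent to the theorem itself.

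The missing ingredient is a two-term transformation at the ${}_3\phi_2$ level that produces the $\mathcal{U}$-series and the Lambert series simultaneously. The paper uses Andrews' identity \cite[p.~141, Theorem 1]{andrews rlnb} with $q\to q^2$, $a=-q$, $b=-q^2$, $B=q^2$, $A=q^{-2}$, applied to the ${}_3\phi_2$ above. With the prefactors restored, the first term on the right becomes $\frac{1}{(q;q^2)_\infty}\sum_{m\ge0}\frac{(q^2;q^2)_m}{(q;q^2)_{m+1}}q^{m+1}$. This is $\mathcal{U}(q)/(q;q^2)_\infty$ by \eqref{new representation of U}; the paper instead passes to the defining series via \eqref{2phi1}. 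In the second term, $(q^{-1};q^2)_{m+1}=(1-q^{-1})(q;q^2)_m$ collapses the summand to a multiple of $q^{2m}/(1-q^{2m+1})^2$. That gives exactly $-\frac{1}{(q^2;q^2)_\infty}\sum_{m\ge0}\frac{q^{2m+1}}{(1-q^{2m+1})^2}$. Without this identity, or an equivalent nonterminating ${}_3\phi_2$ relation, your outline does not reach the theorem.
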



Theorems \ref{sptEO thm} and \ref{sptOE thm} immediately yield the following corollary, which is reminiscent of relations among Ramanujan's mock theta functions. For example, one has the following relation between Ramanujan's mock theta functions \cite[Chapter 2, Entry 2.3.1]{rlnb-V}:
\begin{align}
	\phi_3(-q)-\frac{1}{2}f_3(q)=\frac{1}{2}\frac{(q;q)_\infty}{(-q;q)_\infty^2},\nonumber
\end{align}
where $\phi_3(q):=\sum_{n=0}^\infty\frac{q^{n^2}}{(-q^2;q^2)_n}$ and $f_3(q):=\sum_{n=0}^\infty\frac{q^{n^2}}{(-q;q)_n^2}$ are third order mock theta functions.

In a similar vein, our functions $\mathcal{S}_\mathcal{EO}(q)$ and $\mathcal{S}_\mathcal{OE}(q)$  satisfy the following relation:
\begin{corollary}\label{relation bw sptEO and sptOE}
	We have,
\begin{align*}
	\psi(q)\mathcal{S}_\mathcal{EO}(q)-\mathcal{S}_{\mathcal{OE}}(q)=\frac{E_2(2\tau)-E_2(\tau)}{24\ \eta(2\tau)}e^{\frac{\pi i\tau}{6}},
\end{align*}
where $\psi(q)=(q^2;q^2)_\infty/(q;q^2)_\infty$ is the Ramanujan's theta function and $\eta(\tau):=q^{1/24}(q;q)_\infty$ is the Dedekind eta function.
\end{corollary}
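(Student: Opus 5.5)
This is a direct combination of Theorems \ref{sptEO thm} and \ref{sptOE thm}; no new $q$-series manipulation should be needed. The plan is to eliminate $\mathcal{U}(q)$ between the two identities and then rewrite the leftover eta-quotient.

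First I will use Theorem \ref{sptEO thm} in the form $\mathcal{U}(q)=(q^2;q^2)_\infty\,\mathcal{S}_{\mathcal{EO}}(q)$. Substituting this into \eqref{sptOE thm eqn} gives
\begin{align*}
\mathcal{S}_{\mathcal{OE}}(q)=\frac{(q^2;q^2)_\infty}{(q;q^2)_\infty}\mathcal{S}_{\mathcal{EO}}(q)-\frac{1}{(q^2;q^2)_\infty}\frac{E_2(2\tau)-E_2(\tau)}{24}.
\end{align*}
With $\psi(q)=(q^2;q^2)_\infty/(q;q^2)_\infty$, as in the statement, rearranging yields
\begin{align*}
\psi(q)\mathcal{S}_{\mathcal{EO}}(q)-\mathcal{S}_{\mathcal{OE}}(q)=\frac{E_2(2\tau)-E_2(\tau)}{24\,(q^2;q^2)_\infty}.
\end{align*}

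It remains to express $(q^2;q^2)_\infty$ through the eta function. By definition, $\eta(2\tau)=q^{2/24}(q^2;q^2)_\infty=q^{1/12}(q^2;q^2)_\infty$. Hence
\begin{align*}
\frac{1}{(q^2;q^2)_\infty}=\frac{q^{1/12}}{\eta(2\tau)}.
\end{align*}
With $q=e^{2\pi i\tau}$ we have $q^{1/12}=e^{2\pi i\tau/12}=e^{\pi i\tau/6}$, which is exactly the factor in the stated corollary.

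The only point needing care is this exponent bookkeeping. The rest is immediate, since both theorems are identities of convergent power series for $|q|<1$, so the substitution and rearrangement are valid there.
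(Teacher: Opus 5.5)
Your proposal is correct and follows the paper's own route: the paper states only that the corollary is immediate from Theorems \ref{sptEO thm} and \ref{sptOE thm}. Your elimination of $\mathcal{U}(q)$ and the identity $1/(q^2;q^2)_\infty=e^{\pi i\tau/6}/\eta(2\tau)$ supply exactly the details the paper leaves implicit.
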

The final spt-analogue in this series corresponds to Andrews' function $\mathcal{EO}_{ud}(n)$ which counts the number of partitions of $n$ in which every even part is less than each odd part, and where odd parts must be distinct, while even parts are unrestricted  \cite{andrews parity2}. Let $\mathrm{spt}_{\mathcal{EO}_{ud}}(n)$ denote the number of largest even parts in all the partitions counted by\footnote{Note that the partitions of $n$ counted by $\mathcal{EO}_{ud}(n)$ that contain no even parts do not contribute to $\mathrm{spt}_{\mathcal{EO}_{ud}}(n)$.}
 $\mathcal{EO}_{ud}(n)$. Clearly,
\begin{align}\label{sptEOud}
\mathcal{S}_{\mathcal{EO}_{ud}}(q):=\sum_{n=1}^\infty\mathrm{spt}_{\mathcal{EO}_{ud}}(n)q^n=\sum_{n=1}^{\infty}\frac{q^{2n}(-q^{2n+1};q^2)_\infty}{(1-q^{2n})(q^2;q^2)_n}.
\end{align}

While we showed above that $\mathcal{S}_{\mathcal{EO}}(q)$ and $\mathcal{S}_{\mathcal{OE}}(q)$ are mixed mock modular forms, we now show that $\mathcal{S}_{\mathcal{EO}{ud}}(-q)$ is a mixed mock Maass theta function by expressing it in terms of Ramanujan's $\sigma(q)$ function and 	Zagier's $q$-series $H(q)$, which he describes as ``the half-derivative of the Dedekind eta function".
\begin{theorem}\label{sptEOud thm}
Let $\sigma(q)$ be defined in \eqref{sigma}. We have,
\begin{align}\label{sptEOud thm eqn}
	(-q;q)_\infty	\mathcal{S}_{\mathcal{EO}_{ud}}(-q)=D(q)-\frac{1}{4(q;q)_\infty}H(q)-\frac{1}{4(-q;q)_\infty}\sigma(q),
\end{align}
where $D(q)$ is defined as
\begin{align*}
D(q):=\frac{1}{2}-\sum_{n=1}^\infty d(n)q^n \qquad \big(d(n)=\sum_{d|n}1\big),
\end{align*}
and $H(q)$ is the $q$-series defined by Zagier \cite[p.~951, Equation (20)]{Zagier}:
\begin{align*}
H(q):=\sum_{n=1}^\infty n\chi(n)q^{(n^2-1)/24},
\end{align*}
with  $\chi(.)=(12/.)$ being the unique primitive character of conductor $12$.
\end{theorem}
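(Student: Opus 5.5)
By the definition \eqref{sptEOud}, replacing $q$ by $-q$ leaves $q^{2n}$, $1-q^{2n}$ and $(q^2;q^2)_n$ unchanged and turns $(-q^{2n+1};q^2)_\infty$ into $(q^{2n+1};q^2)_\infty=(q;q^2)_\infty/(q;q^2)_n$. Since $(q^2;q^2)_n(q;q^2)_n=(q;q)_{2n}$ and $(-q;q)_\infty(q;q^2)_\infty=1$, the left-hand side of \eqref{sptEOud thm eqn} should simplify to
\begin{align*}
(-q;q)_\infty\mathcal{S}_{\mathcal{EO}_{ud}}(-q)=\sum_{n=1}^\infty\frac{q^{2n}}{(1-q^{2n})(q;q)_{2n}}.
\end{align*}
So the first step is this reduction. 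Writing $f(m)=q^m/((1-q^m)(q;q)_m)$, the right side is the even-index part $\frac12\sum_{m\ge1}f(m)+\frac12\sum_{m\ge1}(-1)^mf(m)$. I would evaluate the two sums separately as $T(q)$ and $T(-q)$, where $T(t):=\sum_{m\ge1}t^m/((1-q^m)(q;q)_m)$.

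To handle $1/(1-q^m)$ I will use the parameter-differentiation trick. The identity $(a;q)_m/(aq;q)_m=(1-a)/(1-aq^m)$ gives
\begin{align*}
\frac{1}{1-q^m}=-\frac{\partial}{\partial a}\frac{(a;q)_m}{(aq;q)_m}\Big|_{a=1}.
\end{align*}
Hence
\begin{align*}
T(t)=-\frac{\partial}{\partial a}\Big|_{a=1}\sum_{m\ge0}\frac{(a;q)_m}{(q;q)_m(aq;q)_m}t^m,
\end{align*}
which is a ${}_2\phi_1(a,0;aq;q,t)$. I would apply Heine's (or Fine's) transformation to this series, with $t=q$ and then with $t=-q$, to obtain a form whose $a$-derivative at $a=1$ is explicit.

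For $t=q$, I expect the derivative to produce a product $1/(q;q)_\infty$ times a divisor-type sum. Concretely, the aim is
\begin{align*}
T(q)=\frac{1}{(q;q)_\infty}\sum_{n\ge1}\frac{q^n}{1-q^n}
\end{align*}
or an equivalent of it. Combined with the remaining pieces, this should yield the $D(q)$ term.

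For $t=-q$, I expect sums of the type $\sum_{n\ge0}\bigl((q;q)_\infty-(q;q)_n\bigr)$ and $\sum_{n\ge0}\bigl((-q;q)_\infty-(-q;q)_n\bigr)$ to appear. I would then invoke Zagier's identity, which expresses $\sum_n\bigl((q;q)_\infty-(q;q)_n\bigr)$ through $(q;q)_\infty D(q)$ and $H(q)$. I would also use the Andrews--Jim\'enez-Urroz--Ono identity, which expresses the analogous $(-q;q)$-sum through $(-q;q)_\infty D(q)$ and $\sigma(q)$. Dividing by the appropriate infinite products should produce the terms $-\frac{1}{4(q;q)_\infty}H(q)$ and $-\frac{1}{4(-q;q)_\infty}\sigma(q)$, while the $D(q)$ contributions from the two halves combine into $D(q)$, including its constant $\frac12$.

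The main obstacle is the $t=-q$ evaluation: choosing the Heine/Fine transformation so that after differentiation the output is exactly in the Zagier and Andrews--Jim\'enez-Urroz--Ono shapes. After that, matching all constants, the factor $\frac14$, and the constant term $\frac12$ of $D(q)$ is delicate. If the transformation route gets messy, the fallback is a partial-fraction expansion of $1/((1-q^m)(q;q)_m)$. In either case, I would check the final identity numerically on the first several coefficients.
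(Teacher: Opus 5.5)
Your reduction of the left side to $\sum_{n\ge1}q^{2n}/\bigl((1-q^{2n})(q;q)_{2n}\bigr)$ is correct. So is the split into $\frac12\bigl(T(q)+T(-q)\bigr)$, and this is the paper's own skeleton: it writes the even-index sum with the factor $\frac12(1+(-1)^n)$.

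The gap is that you never carry out the evaluation of $T(\pm q)$, and the outcome you predict for it is false. Your target $T(q)=\frac{1}{(q;q)_\infty}\sum_{n\ge1}\frac{q^n}{1-q^n}$ already fails at $q^3$. $T(q)$ weights each partition by the multiplicity of its largest part, so $T(q)=q+3q^2+5q^3+\cdots$. The proposed right-hand side generates the total number of parts, $q+3q^2+6q^3+\cdots$.

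Your allocation of the remaining terms is also wrong. Zagier's sum, and hence $H(q)$, comes from $T(q)$. $T(-q)$ involves only the tails of $(-q;q)_\infty$ and contains no $H(q)$. A plan that extracts only divisor sums from $T(q)$ and both tail sums from $T(-q)$ cannot reproduce the identity. Also, the ``main obstacle'' you locate at $t=-q$ is not where any difficulty lies.

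The missing computation is short, and your own differentiation device gives it once it is actually done. Use Heine with a zero numerator parameter, or simply expand $\frac{1-a}{1-aq^m}$ geometrically and apply Euler's identity. Either way, for $|a|<1$,
\begin{align*}
\sum_{m\ge0}\frac{(a;q)_m}{(q;q)_m(aq;q)_m}t^m=\frac{1-a}{(t;q)_\infty}\sum_{n\ge0}(t;q)_na^n=1+\frac{1-a}{(t;q)_\infty}\sum_{n\ge0}\bigl((t;q)_n-(t;q)_\infty\bigr)a^n .
\end{align*}
The last form continues analytically past $a=1$, so it can be differentiated there, giving
\begin{align*}
T(t)=\frac{1}{(t;q)_\infty}\sum_{n\ge0}\bigl((t;q)_n-(t;q)_\infty\bigr).
\end{align*}
For $t=q$, Zagier's identity gives $T(q)=D(q)-\frac{H(q)}{2(q;q)_\infty}$. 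For $t=-q$, Ramanujan's sum-of-tails identity (the one you attribute to Andrews--Jim\'enez-Urroz--Ono) gives $T(-q)=D(q)-\frac{\sigma(q)}{2(-q;q)_\infty}$. Averaging yields the theorem, with each half contributing $\frac12 D(q)$.

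The paper does the same thing without the auxiliary parameter $a$. It expands $1/(1-q^{2n})$ as a geometric series before the parity split, then applies Euler's identity to each inner sum. This lands directly on the two tail sums.
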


To give a combinatorial interpretation of the left-hand side of \eqref{sptEOud thm eqn}, namely, the term $(-q;q)_\infty	\mathcal{S}_{\mathcal{EO}_{ud}}(-q)$, we recall a partition function introduced by Andrews and Newman \cite[Theorem 4]{an} in their study of partitions with minimal excludant. They studied the function $p_e(n)$, which counts the number of partitions of $n$ with an even number of parts, or equivalently, by conjugation, the number of partitions of $n$ whose largest part is even and other parts are unrestricted. Using the fact that $(-q;q)_\infty=1/(q;q^2)_\infty$, we have
\begin{align}\label{Le generating}
(-q;q)_\infty	\mathcal{S}_{\mathcal{EO}_{ud}}(-q)&=\sum_{n=1}^{\infty}\frac{q^{2n}(q^{2n+1};q^2)_\infty}{(q;q^2)_\infty(1-q^{2n})(q^2;q^2)_n}\nonumber\\
&=\sum_{n=1}^{\infty}\frac{q^{2n}}{(1-q^{2n})^2(q;q)_{2n-1}}\nonumber\\
&=\sum_{n=1}^\infty \mathcal{L}_{e}(n)q^n.
\end{align}
Thus,  $(-q;q)_\infty	\mathcal{S}_{\mathcal{EO}_{ud}}(-q)$ is the generating function for  $\mathcal{L}_{e}(n)$, where  $\mathcal{L}_{e}(n)$ denotes the total number of occurrences of the largest part among all the partitions counted by $p_e(n)$. 

We conclude this subsection by showing that $\mathcal{L}_{e}(n)$ is asymptotically equivalent to $p(n)$, the unrestricted partition function.
\begin{corollary}\label{Len asymptotic}
As $n\to\infty$, we have
\begin{align*}
\mathcal{L}_{e}(n)\sim\frac{1}{2}p(n).
\end{align*}
\end{corollary}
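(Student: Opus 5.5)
We need $\mathcal{L}_e(n)\sim \tfrac12 p(n)$. The plan is to use Theorem \ref{sptEOud thm}, which gives
\begin{align*}
\sum_{n\ge1}\mathcal{L}_e(n)q^n=D(q)-\frac{H(q)}{4(q;q)_\infty}-\frac{\sigma(q)}{4(-q;q)_\infty},
\end{align*}
and to identify which term dominates. The terms $D(q)$ and $\sigma(q)/(-q;q)_\infty$ should be negligible. The coefficients of $D(q)$ are $d(n)=O(n^{\epsilon})$. The coefficients of $1/(-q;q)_\infty=(q;q^2)_\infty$ are bounded by the number of partitions into distinct odd parts, which grows like $e^{\pi\sqrt{n/6}}$. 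The coefficients of $\sigma(q)$ grow at most like $e^{c\sqrt n}$ with $c$ small; Andrews--Dyson--Hickerson show they are of size roughly $e^{c\sqrt{n}}$ with a much smaller constant, and even a crude bound $e^{\pi\sqrt{n/3}}$ suffices. The coefficients of the product are then at most $n\,e^{(\pi/\sqrt6+\pi/\sqrt3)\sqrt n}$. I need to check this against $e^{\pi\sqrt{2n/3}}$. Since $\pi\sqrt{2/3}\approx2.565$ while $\pi/\sqrt6+\pi/\sqrt3\approx1.28+1.81=3.09$, this crude bound is too weak. I will instead use the genuine ADH bound, under which the $\sigma(q)$ coefficients are subexponential in $\sqrt n$ relative to the relevant scale. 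Alternatively, I will argue radially: as $q\to1^-$, $\sigma(q)$ stays bounded while $(q;q^2)_\infty\to0$, so this term is $O(1)$ near $q=1$.

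The main term is $-\tfrac14 H(q)/(q;q)_\infty$. Here $H(q)=\sum_n n\chi(n)q^{(n^2-1)/24}=1-5q-7q^2+11q^5+\dots$, which is an Eichler-integral-type half derivative of $\eta$. Its radial limit as $q\to1^-$ is governed by the Mellin/Zagier asymptotic expansion. With $q=e^{-t}$, one has $\sum n\chi(n)e^{-n^2t/24}\sim\sum_k c_k t^k$, where the constant term is $L(-1,\chi)$. For the character $(12/\cdot)$ of period $12$ this gives $L(-1,\chi)=-\tfrac{1}{24}\sum_{a=1}^{12}\chi(a)a^2\cdot(\text{period factor})$. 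A direct computation should yield $H(q)\to -2$, so that $-\tfrac14 H(q)\to\tfrac12$. I would verify this constant carefully, since it is the crux: the claimed $\tfrac12$ forces $L(-1,\chi)\cdot e^{t/24}$ to tend to $-2$.

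With $-\tfrac14 H(q)\to\tfrac12$ near $q=1$, and with $H$ growing at most polynomially (like $t^{-1}$) near other roots of unity, where $1/(q;q)_\infty$ is exponentially smaller, a standard Tauberian argument finishes the proof. I would use Ingham's theorem, which requires monotone coefficients. Alternatively, I would apply Wright's circle method in the form of Ngo--Rhoades/Bringmann et al., for $F(q)=L(q)P(q)$ with $L(e^{-z})\to\tfrac12$ in a cone and $L$ of polynomial growth elsewhere. This gives $\mathcal{L}_e(n)\sim\tfrac12 p(n)$.

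The main obstacles are two. The first is computing the exact limit of $H$ at $q=1$. The second is controlling the $\sigma(q)/(-q;q)_\infty$ term on the major arc and the minor arcs. Monotonicity of $\mathcal{L}_e(n)$ can also be seen combinatorially, by adding a part $1$, which keeps the largest part even for $n\ge2$. This allows a simple Ingham Tauberian theorem to suffice once the radial asymptotic $\tfrac{1}{2}(q;q)_\infty^{-1}$ is established.
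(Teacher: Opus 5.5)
Your proposal is correct and follows essentially the paper's route. Like the paper, you split the generating function via Theorem \ref{sptEOud thm}, use $H(e^{-t})\to L(-1,\chi)=-2$ so the series is $\sim\frac12(q;q)_\infty^{-1}$ while $D(q)$ and $\sigma(q)(q;q^2)_\infty$ are negligible, and conclude with Proposition \ref{ITT}. The constant does check out: your formula with no extra period factor gives $-\frac{1}{24}(1-25-49+121)=-2$. Note that Proposition \ref{ITT} also needs the bound in the cones $R_\Delta$, not just the radial limit. Two small remarks. Your injection $\lambda\mapsto\lambda\cup\{1\}$ proves monotonicity better than the paper's $(1-q^2)$-positivity, which by itself only gives $\mathcal{L}_e(n)\ge\mathcal{L}_e(n-2)$. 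The ``crude'' bound you discarded in fact suffices: $|\sigma(q)(q;q^2)_\infty|\le(-|q|;|q|)_\infty(-|q|;|q|^2)_\infty\ll e^{\pi^2/(8\,\mathrm{Re}\,z)}$, equivalently $a\sqrt{k}+b\sqrt{n-k}\le\sqrt{(a^2+b^2)n}$ gives exponent $\pi\sqrt{n/2}<\pi\sqrt{2n/3}$; this also supplies the cone condition, which your radial boundedness of $\sigma$ alone does not.
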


This paper is organized as follows. In Section \ref{section: preliminaries}, we collect several results from the literature that will be needed in the proofs of our main theorems. Section \ref{section: partitions with parity} is devoted to the proofs of Theorem \ref{oebar thm}, Corollary \ref{hurwitz combinatorics}, Theorem \ref{sigma* as generating function}, and Theorem \ref{sigma* representations}. Quantum $q$-series identity given in Theorem \ref{quantum series} is proved in Section \ref{section: quantum series}. In Section \ref{section: asymptotics}, we establish the asymptotic results stated in Theorem \ref{asymptotic of kn} and Corollary \ref{Len asymptotic}. The combinatorial results (Theorems \ref{combinatorics of U2}, \ref{U in terms of difference of odd Ferrers}, \ref{U in terms of difference of simple}, \ref{combinatorics of sigma*}, and \ref{combinatorics of sigma* in refined}) are proved in Section \ref{section: combinatorics}. Finally, Section \ref{section: spt function} is devoted to the smallest-part analogues, where we prove Theorems \ref{sptEO thm}, \ref{sptOE thm}, and \ref{sptEOud thm}. We conclude the paper with some remarks and pose several questions for further study in Section \ref{section: concluding remarks}.

\section{Preliminaries}\label{section: preliminaries}

The following definition of the basic hypergeometric series ${}_{r+1}\phi_r$ will be used throughout the paper:
\begin{align*}
	\pFq{r+1}{r}{a_1,a_2,\cdots,a_{r+1}}{b_1,b_2,\cdots,b_{r}}{q,z}:=\sum_{n=0}^\infty\frac{(a_1;q)_n(a_2;q)_n\cdots(a_{r+1};q)_n}{(q;q)_n(b_1;q)_n(b_2;q)_n\cdots(b_{r};q)_n}z^n.
\end{align*}

We will be needing the Euler--Maclaurin summation formula for two-dimensional complex-valued functions obtained by Bringmann, Jennings-Shaffer and Mahlburg \cite[Proposition 5.2]{bjm}. To state the formula, let us fix notations. We write $\overrightarrow{x}=(x_1,x_2)\in\mathbb{R}^2$. We say that a function $g:D\rightarrow\mathbb{C}^2$, where $D\subset\mathbb{C}^2$ is an unbounded domain, is of sufficient decay if there
exists $\epsilon_1>0$ and $\epsilon_2>0$ such that 
\begin{align*}
g(\overrightarrow{x})\ll\frac{1}{|x_1|^{1+\epsilon_1}|x_2|^{1+\epsilon_2}},
\end{align*}
uniformly  as $|x_1|+|x_2|\to\infty$ in $D$. Let us also define
\begin{align*}
D_\theta:=\{re^{i\alpha}:r\geq0\ \mathrm{and}\ |\alpha|\leq\theta\}, \quad 0\leq\theta<\frac{\pi}{2}.
\end{align*}
Bringmann, Jennings-Shaffer and Mahlburg \cite[Proposition 5.2]{bjm} proved the following summation formula.
\begin{proposition}\label{EMS}
	Let $0 \leq \theta < \frac{\pi}{2}$ and $g$ be a holomorphic function in a domain containing $D_\theta \times D_\theta$. Also assume that $g$ and all of its derivatives are of sufficient decay. Then for $a \in \mathbb{R}_{\geq 0}^2$, $N \in \mathbb{N}_0$, and as $z \to 0$ uniformly in $D_\theta$, we have
	\begin{align*}
		\sum_{m_1,m_2 \geq 0} g((\overrightarrow{m}+\overrightarrow{a})z) &= \frac{1}{z^2} \int_0^\infty \int_0^\infty g(\overrightarrow{x}) \, dx_1 dx_2 - \frac{1}{z} \sum_{n_1=0}^N \frac{B_{n_1+1}(a_1) z^{n_1}}{(n_1+1)!} \int_0^\infty g^{(n_1, 0)}(0, x_2) \, dx_2 \\
		&\quad - \frac{1}{z} \sum_{n_2=0}^N \frac{B_{n_2+1}(a_2) z^{n_2}}{(n_2+1)!} \int_0^\infty g^{(0, n_2)}(x_1, 0) \, dx_1 \\
		&\quad + \sum_{n_1+n_2 < N} \frac{B_{n_1+1}(a_1) B_{n_2+1}(a_2) f^{(n_1, n_2)}(\overrightarrow{0})}{(n_1+1)!(n_2+1)!}z^{n_1+n_2} + O(z^N),
	\end{align*}
where $B_n(x)$ are the Bernoulli polynomials.
\end{proposition}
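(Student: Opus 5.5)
The plan is to derive the two-dimensional formula by iterating the one-dimensional Euler--Maclaurin formula for shifted sums along a ray, in the form used by Zagier. That formula reads: for $h$ holomorphic on a domain containing $D_\theta$, with $h$ and all its derivatives of sufficient decay (i.e.\ $\ll |x|^{-1-\epsilon}$), $a\ge 0$ and $M\in\mathbb{N}_0$, as $z\to0$ in $D_\theta$,
\begin{align*}
\sum_{m\ge0}h((m+a)z)=\frac1z\int_0^\infty h(x)\,dx-\sum_{n=0}^{M}\frac{B_{n+1}(a)z^n}{(n+1)!}h^{(n)}(0)+O(z^{M+1}).
\end{align*}
I would first prove this statement. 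For real $z>0$ it is the classical Euler--Maclaurin formula, with remainder $\frac{z^{M+1}}{(M+1)!}\int_0^\infty \widetilde B_{M+1}(a-x/z)\,h^{(M+1)}(x)\,dx$, where $\widetilde B_{M+1}$ denotes the periodic Bernoulli function. For $z=|z|e^{i\alpha}$ with $|\alpha|\le\theta$, apply the real case to $x\mapsto h(xe^{i\alpha})$. The integral $\int_0^\infty h(xe^{i\alpha})e^{i\alpha}dx$ can then be rotated back to the positive real axis by Cauchy's theorem, since $h$ is holomorphic on the sector and decays there. The remainder is bounded by $|z|^{M+1}\int_0^\infty|h^{(M+1)}(xe^{i\alpha})|\,dx$, which is uniform in $\alpha$ by the decay hypothesis.

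Next, for fixed $m_2$, apply the one-dimensional formula in the first variable to $x_1\mapsto g(x_1,(m_2+a_2)z)$. This gives
\begin{align*}
\frac1z\int_0^\infty g(x_1,(m_2+a_2)z)\,dx_1-\sum_{n_1=0}^{N}\frac{B_{n_1+1}(a_1)z^{n_1}}{(n_1+1)!}g^{(n_1,0)}(0,(m_2+a_2)z)+R_{m_2}(z).
\end{align*}
Now sum over $m_2$, applying the one-dimensional formula again in the second variable, separately to two kinds of functions. The first is $G(y):=\int_0^\infty g(x_1,y)\,dx_1$, which is holomorphic and of sufficient decay, again rotating contours. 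This produces the main term $z^{-2}\iint g$ together with the terms $-z^{-1}\sum_{n_2}\frac{B_{n_2+1}(a_2)z^{n_2}}{(n_2+1)!}\int_0^\infty g^{(0,n_2)}(x_1,0)\,dx_1$. The second kind is the functions $h_{n_1}(y):=g^{(n_1,0)}(0,y)$. These produce the term $-z^{-1}\sum_{n_1}\frac{B_{n_1+1}(a_1)z^{n_1}}{(n_1+1)!}\int_0^\infty g^{(n_1,0)}(0,x_2)\,dx_2$ and the double Bernoulli terms $\frac{B_{n_1+1}(a_1)B_{n_2+1}(a_2)}{(n_1+1)!(n_2+1)!}g^{(n_1,n_2)}(\overrightarrow 0)z^{n_1+n_2}$. (The $f$ in the last term of the statement should read $g$.) After collecting these, one truncates every expansion to the stated ranges, and all discarded terms are $O(z^N)$.

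The main obstacle is controlling $\sum_{m_2}R_{m_2}(z)$ uniformly, because summing over $m_2$ costs a factor $1/z$. My plan is to use the remainder representation: $R_{m_2}(z)$ is bounded by $|z|^{N+1}\int_0^\infty|g^{(N+1,0)}(x_1e^{i\alpha},(m_2+a_2)z)|\,dx_1$. By the product-type decay hypothesis this is $\ll |z|^{N+1}\,|(m_2+a_2)z|^{-1-\epsilon_2}$ for large $m_2$ and $O(|z|^{N+1})$ for small $m_2$. Summing over $m_2$ then gives $O(z^{N})$ (split at $m_2\approx 1/|z|$), which is exactly the allowed error. If the error is lossy at any stage, I would run the one-dimensional expansions to order $N+1$ and discard the extra terms. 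The remainders from the second-variable expansions are handled directly by the one-dimensional bound, since $G$ and the $h_{n_1}$ inherit holomorphy and sufficient decay from $g$.
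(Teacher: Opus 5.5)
The paper gives no proof of this proposition; it quotes it from \cite[Proposition 5.2]{bjm}. Your route is the natural one: the one-dimensional shifted Euler--Maclaurin formula along a ray, applied first in $x_1$ for fixed $m_2$ and then in $x_2$ to $G$ and to the $h_{n_1}$. Your bookkeeping of the main terms and truncations is correct, as is the remark that $f$ should read $g$. There is, however, one concrete error, and it sits exactly at the step you call the main obstacle.

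\textbf{The remainder carries the wrong power of $z$.} With $\phi(u)=h(ut)$ and $0<a\le 1$, the classical remainder after the terms $n=0,\dots,M$ is $\frac{(-1)^M}{(M+1)!}\int_0^\infty \widetilde B_{M+1}(u-a)\phi^{(M+1)}(u)\,du$. Since $\phi^{(M+1)}(u)=t^{M+1}h^{(M+1)}(ut)$ and $du=dx/t$, this equals $\frac{(-1)^M t^{M}}{(M+1)!}\int_0^\infty \widetilde B_{M+1}(x/t-a)\,h^{(M+1)}(x)\,dx$. It carries $t^{M}$, not $t^{M+1}$, so its trivial bound is only $|z|^{M}\int_0^\infty |h^{(M+1)}(xe^{i\alpha})|\,dx$.

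As a check, take $h(x)=e^{-x}$, $a=1$, $M=0$. The true remainder of $\sum_{m\ge0}e^{-(m+1)t}=\frac1t-\frac12+\frac{t}{12}+\cdots$ is $t/12+O(t^2)$, whereas your expression evaluates to $O(t^2)$.

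In one variable this is harmless, since the next term is $O(z^{M+1})$; you just take it and discard it. In your two-variable step it breaks the argument as written. Expanding in $x_1$ only through $n_1=N$ gives
$R_{m_2}(z)\ll |z|^{N}\int_0^\infty |g^{(N+1,0)}(x_1e^{i\alpha},(m_2+a_2)z)|\,dx_1$,
and summing over $m_2$ costs $|z|^{-1}$, leaving only $O(z^{N-1})$.

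\textbf{The fix.} Your fallback in the last paragraph is therefore not optional. Expand in $x_1$ through $n_1=N+1$. Then, with $y=(m_2+a_2)z$, you get $R_{m_2}(z)\ll |z|^{N+1}\int_0^\infty |g^{(N+2,0)}(x_1e^{i\alpha},y)|\,dx_1\ll |z|^{N+1}(1+|y|)^{-1-\epsilon_2}$. Since $\sum_{m_2\ge0}(1+(m_2+a_2)|z|)^{-1-\epsilon_2}\ll |z|^{-1}$, the summed remainder is $O(z^N)$. The same estimate shows that the extra explicit term $-\frac{B_{N+2}(a_1)z^{N+1}}{(N+2)!}\sum_{m_2\ge0}g^{(N+1,0)}(0,(m_2+a_2)z)$ is $O(z^N)$ and can be dropped. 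Make the same correction in your proof of the one-dimensional lemma, where $O(z^{M+1})$ likewise comes from one extra term, not from the displayed remainder.

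\textbf{A smaller point on the decay hypothesis.} What you actually use throughout is the uniform estimate $\int_0^\infty |g^{(k,l)}(x_1e^{i\alpha},y)|\,dx_1\ll (1+|y|)^{-1-\epsilon_2}$ for $y\in D_\theta$ and $|\alpha|\le\theta$. You need it for both ranges of $m_2$, and it is also what makes $G$ and $h_{n_1}(y)=g^{(n_1,0)}(0,y)$ of sufficient decay. It is immediate if the hypothesis is read as $g\ll(1+|x_1|)^{-1-\epsilon_1}(1+|x_2|)^{-1-\epsilon_2}$. The bound as printed, $|x_1|^{-1-\epsilon_1}|x_2|^{-1-\epsilon_2}$, is not integrable near the axes and says nothing at $x_1=0$. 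To get the needed estimate from it, you must use the decay of higher derivatives, for instance Taylor's formula in $x_1$ along the segment from $x_1/|x_1|$ to $x_1$, and symmetrically in $x_2$. State which form you are using.
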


We also require the following modified version of Ingham’s Tauberian theorem derived by Bringmann, Jennings-Shaffer and Mahlburg  \cite[Theorem 1.1]{bjm}. 
\begin{proposition}\label{ITT}
	Let $\displaystyle B(q) = \sum_{n=0}^\infty b(n)q^n$ be a power series whose radius of convergence is at least one and assume that $b(n)$ are non-negative and weakly increasing. Also suppose that $\lambda, \beta, \gamma \in \mathbb{R}$ with $\gamma > 0$ exist such that
	\begin{equation}
		B(e^{-t}) \sim \lambda t^{\beta} e^{\frac{\gamma}{t}} \quad \text{as } t \to 0^+, \quad B(e^{-z}) \ll |z|^{\beta} e^{\frac{\gamma}{\text{Re } z}} \quad \text{as } z \to 0, \nonumber
	\end{equation}
	with the latter condition holding in each region $R_{\Delta}$:
	\begin{equation}\label{r delta}
		R_{\Delta} := \{x + iy : x, y \in \mathbb{R}, x > 0, |y| \leq \Delta x\},
	\end{equation}
	 for $\Delta \geq 0$. Then we have
	\begin{equation*}
		b(n) \sim \frac{\lambda \gamma^{\frac{\beta}{2} + \frac{1}{4}}}{2\sqrt{\pi} n^{\frac{\beta}{2} + \frac{3}{4}}} e^{2\sqrt{\gamma n}} \quad \text{as } n \to \infty.
	\end{equation*}
\end{proposition}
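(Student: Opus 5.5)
The plan is to reduce the statement to a Tauberian theorem for the summatory function and then recover $b(n)$ from monotonicity. Set $S(x):=\sum_{n\le x}b(n)$, so that $B(e^{-z})=z\int_0^\infty S(x)e^{-zx}\,dx$. The hypothesis $B(e^{-t})\sim\lambda t^\beta e^{\gamma/t}$ then becomes an asymptotic for the Laplace transform of $S$ as $t\to0^+$. The bound $B(e^{-z})\ll|z|^\beta e^{\gamma/\mathrm{Re}\,z}$ on every cone $R_\Delta$ transfers to the same kind of bound, with one extra power of $|z|$, for $\int_0^\infty S(x)e^{-zx}dx$.

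\textbf{Step 1: asymptotic for $S(x)$.} First I would guess the answer by a formal saddle-point computation: the inverse Laplace transform of $\lambda t^{\beta-1}e^{\gamma/t}$ has saddle at $t_x=\sqrt{\gamma/x}$. This suggests
\[
S(x)\sim \frac{\lambda\gamma^{\frac{\beta}{2}-\frac14}}{2\sqrt{\pi}\,x^{\frac{\beta}{2}+\frac14}}e^{2\sqrt{\gamma x}}.
\]
To justify it I would follow Ingham's original argument. Convolve $S$ with a smooth kernel at scale $x^{3/4}$, which is the natural width of the saddle. Then write the smoothed quantity as a contour integral over the vertical segment $\mathrm{Re}\,z=t_x$. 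On the part of the segment with $|\mathrm{Im}\,z|\le\Delta t_x$ (the cone $R_\Delta$), compare $B(e^{-z})$ with the model function $\lambda z^\beta e^{\gamma/z}$, whose inverse transform is known explicitly through Bessel-type integrals. Off the cone, the decay of the kernel's transform kills the contribution.

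\textbf{Step 2: the main obstacle.} The difficulty is that on the cone we are given only an upper bound, not an asymptotic. The real asymptotic must be propagated into the cone, and I would do this via nonnegativity of $b(n)$. Write $x^{-1}$-scaled measures $\mu_x$ from the coefficients. The real-line asymptotic identifies their Laplace transforms at all real points near $t_x$. The cone bound gives uniform boundedness and analyticity of the normalized functions $B(e^{-z})/(\lambda z^\beta e^{\gamma/z})$ in a neighbourhood of the real axis, after rescaling $z=t_x w$. By Vitali's theorem (normal families) these normalized functions converge locally uniformly to $1$ on the rescaled cone, since they converge on the real segment. This upgrades the bound to an asymptotic in a shrinking cone $|\mathrm{Im}\,w|\le\Delta$. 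Letting $\Delta\to\infty$ slowly then gives the full saddle-point evaluation.

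\textbf{Step 3: pass from $S$ to $b(n)$.} Because $b$ is weakly increasing, for $h=o(\sqrt{n})$ we have
\[
h\,b(n)\le S(n+h)-S(n)\le h\,b(n+h),
\]
and similarly with $n-h$ in place of $n+h$. Choose $h=\varepsilon\sqrt{n/\gamma}$. Step 1 gives $S(n\pm h)=S(n)\,e^{\pm\varepsilon+o(1)}$. Since $\frac{d}{dx}2\sqrt{\gamma x}=\sqrt{\gamma/x}$, dividing yields
\[
b(n)\sim\sqrt{\gamma/n}\;S(n)
\]
as $\varepsilon\to0$. This produces exactly the stated formula
\[
b(n)\sim\frac{\lambda\gamma^{\frac\beta2+\frac14}}{2\sqrt\pi\,n^{\frac\beta2+\frac34}}\,e^{2\sqrt{\gamma n}}.
\]
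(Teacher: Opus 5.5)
\textbf{Verdict: genuine gap.} The statement as printed is false, and your Step 2 rests on a claim that fails even under the correct hypothesis. The paper does not prove this proposition; it quotes \cite[Theorem 1.1]{bjm}.

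\textbf{The printed statement is false.} As printed, the cone hypothesis is empty. Since $b(n)\ge0$, we have $|B(e^{-z})|\le B(e^{-\mathrm{Re}\,z})\sim\lambda(\mathrm{Re}\,z)^\beta e^{\gamma/\mathrm{Re}\,z}$, and $\mathrm{Re}\,z\asymp_\Delta|z|$ on $R_\Delta$. So the statement reduces to Ingham's real-axis theorem, which is false. In \cite{bjm} the bound reads $B(e^{-z})\ll|z|^\beta e^{\gamma/|z|}$.

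Here is a counterexample. Write $I_0(2\sqrt u)=\sum_{k\ge0}u^k/k!^2$. For fixed $w\in\mathbb{C}$, Poisson summation gives $\sum_{n\ge0}I_0(2\sqrt{wn})e^{-nz}=z^{-1}e^{w/z}+O(1)$ as $z\to0$ with $\mathrm{Re}\,z>0$. The reason is that the $k\ne0$ terms $(z+2\pi ik)^{-1}e^{w/(z+2\pi ik)}$ have bounded symmetric sum. Take $b(n)=I_0(2\sqrt{\gamma n})+\varepsilon\,\mathrm{Re}\,I_0(2\sqrt{\gamma'n})$ with $\gamma'=(\sqrt\gamma+\frac i2)^2$, and set $b(n)=0$ for small $n$. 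Then
\begin{equation*}
B(e^{-z})=z^{-1}\bigl(e^{\gamma/z}+\tfrac{\varepsilon}{2}e^{\gamma'/z}+\tfrac{\varepsilon}{2}e^{\overline{\gamma'}/z}\bigr)+O(1).
\end{equation*}
\begin{itemize}
\item Since $\mathrm{Re}\,\gamma'=\gamma-\frac14$, we get $B(e^{-t})\sim t^{-1}e^{\gamma/t}$.
\item For small $\varepsilon$ the coefficients are non-negative and weakly increasing.
\item Since $\mathrm{Re}\sqrt{\gamma'}=\sqrt\gamma$, $b(n)$ divided by the predicted main term equals $1+c\,\varepsilon\cos(\sqrt n-\theta)+o(1)$ with $c>0$, which has no limit.
\end{itemize}
So no argument can prove the printed statement, and yours breaks at Step 2.

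\textbf{Step 2 fails even under the correct hypothesis.} The printed bound gives only
\begin{equation*}
\Bigl|\frac{B(e^{-z})}{\lambda z^\beta e^{\gamma/z}}\Bigr|\ll\exp\Bigl(\frac{\gamma(\mathrm{Im}\,z)^2}{\mathrm{Re}\,z\,|z|^2}\Bigr)=\exp\Bigl(\frac{\gamma}{t_x}\cdot\frac{(\mathrm{Im}\,w)^2}{\mathrm{Re}\,w\,|w|^2}\Bigr),\qquad z=t_xw .
\end{equation*}
The $|z|$-bound gives $\exp\bigl(\frac{\gamma}{t_x}\frac{|w|-\mathrm{Re}\,w}{|w|^2}\bigr)$ instead. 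Both blow up as $t_x\to0$ at every non-real $w$. So Vitali has no uniformly bounded family on any fixed $w$-neighbourhood.

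The conclusion you want from Vitali is also false. Replace $\gamma'$ above by $\gamma e^{i\phi}$ with $0<\phi<\frac\pi2$. Every hypothesis of \cite{bjm} then holds, because $|e^{\gamma e^{\pm i\phi}/z}|\le e^{\gamma/|z|}$. Yet on the ray $\arg z=\phi$ the normalized ratio exceeds $\frac{\varepsilon}{2}e^{\gamma(1-\cos\phi)/|z|}-O(1)\to\infty$.

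\textbf{What can be repaired.} The correct statement lives on the saddle scale $z=t+t^{3/2}\zeta$, with $\zeta$ in a fixed disc. There positivity alone bounds the ratio by $e^{\gamma(\mathrm{Im}\,\zeta)^2+o(1)}$. Vitali then gives $B(e^{-z})\sim\lambda z^\beta e^{\gamma/z}$ for $|z-t|\le Ct^{3/2}$, which is all the main term needs.

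The $|z|$-hypothesis does a different job. On $Ct^{3/2}\le|\mathrm{Im}\,z|\le\Delta t$ it gives $\gamma/|z|\le\gamma/t-c_\Delta(\mathrm{Im}\,z)^2/t^3$, that is, Gaussian decay. The trivial bound $|B(e^{-z})|\le B(e^{-t})$ would lose a factor $t^{-1/2}$ against the main term in that range.

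\textbf{The smoothing scale in Step 1 is too coarse.} Over a window of length $x^{3/4}$, $S$ changes by a factor $e^{\sqrt\gamma x^{1/4}}$. So monotonicity cannot turn an asymptotic for the smoothed average back into one for $S(x)$; only windows of length $\varepsilon\sqrt x$ are admissible. Instead, smooth the tilted weights $b(n)e^{-nt}$, with $t=\sqrt{\gamma/x}$, at scale $\varepsilon/t$. Use a kernel whose Fourier transform is supported in $[-1,1]$. Then only $|\mathrm{Im}\,z|\le t/\varepsilon$, that is the cone $R_{1/\varepsilon}$, ever enters, and no off-cone estimate is needed. Unsmooth using monotonicity of $b$ and let $\varepsilon\to0$.

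Your saddle-point guess for $S$ and your Step 3 are fine as written.
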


\section{Partition function with parts separated by parity}\label{section: partitions with parity}

	\begin{proof}[\textbf{Thoerem \textup{\ref{oebar thm}}}][]
From \eqref{oe bar generating}, it is easy to see that
	\begin{align*}
		\overline{\mathrm{oe}}(q)&=\frac{1}{q(q^4;q^4)_\infty}\sum_{n=1}^{\infty}\frac{q^{2n}(q^4;q^4)_{n-1}}{(q^2;q^4)_n}\\
	&=\frac{q}{(q^4;q^4)_\infty(1-q^2)}\sum_{n=0}^{\infty}\frac{q^{2n}(q^4;q^4)_{n}}{(q^6;q^4)_{n}},
\end{align*}
where the last step follows upon replacing $n$ by $n+1$. We now make use of Heine's transformation \cite[p.~359, Equation (III.1)]{gr}
\begin{align}\label{hiene}
\sum_{n=0}^{\infty}\frac{(a)_n(b)_n}{(q)_n(c)_n}t^n=\frac{(b)_\infty(at)_\infty}{(c)_\infty(t)_\infty}\sum_{n=0}^{\infty}\frac{(\frac{c}{b})_n(t)_n}{(q)_n(at)_n}b^n,
\end{align}
with by first replacing $q$ by $q^4$ and then letting $a=q^4$, $t=q^2$, $b=q^4$ and $c=q^6$ to arrive at
	\begin{align*}
	\overline{\mathrm{oe}}(q)&=\frac{q}{(q^4;q^4)_\infty(1-q^2)}\left[ \frac{(q^4;q^4)_\infty(q^6;q^4)_\infty}{(q^6;q^4)_\infty(q^2;q^4)_\infty}\sum_{n=0}^{\infty}\frac{(q^2;q^4)_n(q^2;q^4)_n}{(q^4;q^4)_n(q^6;q^4)_n}q^{4n}\right].
	\end{align*}
Now invoking the ${}_2\phi_1$-transformation \cite[p.~359, Equation (III.4)]{gr}
\begin{align}\label{2phi1}
{}_2\phi_1(a,b;c;q,z)=\frac{(az;q)_\infty}{(z;q)_\infty}\sum_{n=0}^\infty\frac{(a;q)_n(c/b;q)_n(-bz)^nq^{\frac{n(n-1)}{2}}}{(q;q)_n(c;q)_n(az;q)_n},
\end{align}
with first replacing $q$ by $q^4$ and then letting $a=q^2$, $b=q^2$, $c=q^6$ and $z\rightarrow q^4$, we are led to
	\begin{align*}
	\overline{\mathrm{oe}}(q)&=\frac{q}{(1-q^2)(q^2;q^4)_\infty}\frac{(q^6;q^4)_\infty}{(q^4;q^4)_\infty}\sum_{n=0}^{\infty}\frac{(q^2;q^4)_n(q^4;q^4)_n}{(q^4;q^4)_n(q^6;q^4)_n(q^6;q^4)_n}q^{6n}(-1)^nq^{2n(n-1)}\\
		&=\frac{q}{(1-q^2)(q^4;q^4)_\infty}\sum_{n=0}^{\infty}\frac{(-1)^nq^{2n^2+4n}}{(1-q^{4n+2})(q^6;q^4)_n}\\
		&=\frac{1}{q(1-q^2)(q^4;q^4)_\infty}\sum_{n=0}^{\infty}\frac{(-1)^nq^{2(n^2+2n+1)}}{(1-q^{4n+2})(q^6;q^4)_n}\\
		&=\frac{1}{q(q^4;q^4)_\infty}\sum_{n=0}^{\infty}\frac{(-1)^nq^{2(n+1)^2}}{(1-q^{4n+2})(q^2;q^4)_{n+1}},
	\end{align*}
The required identity now follows upon replacing $n$ by $n-1$ and invoking the definition of $\mathcal{U}(q)$ from \eqref{u defn}.
\end{proof}

\begin{proof}[\textbf{Corollary \textup{\ref{hurwitz combinatorics}}}][]
The result follows from Theorem \ref{combinatorics of U2} and the fact that \cite[Equation (1.5)]{abl}
\begin{align*}
\mathcal{U}(q)=\sum_{n=1}^\infty H(8n-1)q^n.
\end{align*}
\end{proof}

\begin{proof}[\textbf{Theorem \textup{\ref{sigma* as generating function}}}][]
Note that
\begin{align}\label{before cauchy}
	\mathcal{K}(q)&=\sum_{n=0}^{\infty}\frac{q^{2n+3}}{(1-q^{2n+3})(q^2;q^2)_n} \nonumber\\
	&=q^3\sum_{n=0}^{\infty}\frac{q^{2n}}{(q^2;q^2)_n}\sum_{m=0}^{\infty}q^{(2n+3)m} \nonumber\\
	&=q^3\sum_{m=0}^{\infty}q^{3m}\sum_{n=0}^{\infty}\frac{q^{2n(m+1)}}{(q^2;q^2)_n}.
\end{align}
Employing the following identity of Cauchy \cite[p.~17, Theorem~2.1]{andrews}
\begin{align*}
	\sum_{n=0}^{\infty}\frac{(a;q)_n}{(q;q)_n}z^n=\frac{(az;q)_\infty}{(z;q)_\infty},
\end{align*}
for the inner sum of \eqref{before cauchy}, we are led to 
\begin{align}\label{before exercise}
	\mathcal{K}(q)=\frac{q^3}{(q^2;q^2)_\infty}\sum_{m=0}^{\infty}q^{3m}(q^2;q^2)_m.
\end{align}
Now using the following identity \cite[p.~29, Exercise (6)]{andrews}
\begin{align*}
\sum_{n=0}^{\infty}\left(\frac{-xq}{y};q^2\right)_ny^n=\sum_{n=0}^{\infty}\frac{q^{n^2}x^n}{(y;q^2)_{n+1}},
\end{align*}
(with replacing $y=q^3$ and $x=-q^4$) in \eqref{before exercise}, we obtain
\begin{align*}
	\mathcal{K}(q)&=\frac{q^3}{(q^2;q^2)_\infty}\sum_{n=0}^{\infty}\frac{(-1)^nq^{n^2+4n}}{(q^3;q^2)_{n+1}}\\
	&=\frac{1-q}{q(q^2;q^2)_\infty}\sum_{n=0}^{\infty}\frac{(-1)^nq^{(n+2)^2}}{(q;q^2)_{n+2}}\\
	&=\frac{1-q}{q(q^2;q^2)_\infty}\sum_{n=2}^{\infty}\frac{(-1)^nq^{n^2}}{(q;q^2)_{n}}\\
	&=\frac{1-q}{q(q^2;q^2)_\infty}\sum_{n=1}^{\infty}\frac{(-1)^nq^{n^2}}{(q;q^2)_{n}}+\frac{1}{(q^2;q^2)_\infty}.
\end{align*}
Invoking the definition of $\sigma^*(q)$ from \eqref{sigma* as generating function}, we complete the proof of the theorem.
\end{proof}

\begin{proof}[\textbf{Theorem \textup{\ref{sigma* representations}}}][]
Employing \eqref{hiene} with first replacing $q$ by $q^2$ and then letting  $t=q^2$, $b=q$, $c=q^3$ and $a=0$,  we have	
	\begin{align*}
		\sum_{n=0}^{\infty}\frac{(q;q^2)_n}{(q^3;q^2)_n(q^2;q^2)_n}q^{2n}=\frac{1-q}{(q^2;q^2)_\infty}\sum_{n=0}^{\infty}(q^2;q^2)_nq^{n}.
	\end{align*}
Multiplying both sides of the above equation by $q$ and rearranging expressions, we have
	\begin{align*}
	\sum_{n=0}^{\infty}(q^2;q^2)_nq^{n+1}=\sum_{n=0}^{\infty}\frac{q^{2n+1}}{(1-q^{2n+1})}(q^{2n+2};q^2)_\infty.
\end{align*}
The result now follows upon invoking \eqref{sigma* repres}.

We now prove \eqref{representation 2}. To that end, replacing $n$ by $n+1$ in \eqref{sigma*}, we can see that
\begin{align}\label{before fine}
	\sigma^*(q)&=-\sum_{n=0}^{\infty}\frac{(-1)^nq^{(n+1)^2}}{(q;q^2)_{n+1}}\nonumber\\
	&=-\frac{q}{(1-q)}\sum_{n=0}^{\infty}\frac{(-1)^nq^{n^2+2n}}{(q^3;q^2)_n}.	
\end{align}
We need the following transformation \cite[p.~4, Equation (6.1)]{fine},
$$F(a,0;t:q)=\frac{1}{1-t}\sum_{n=0}^{\infty}\frac{(-at)^nq^{\frac{n^2+n}{2}}}{(tq)_n},$$
where,
$$F(a,b;t:q)=1+\sum_{n=1}^{\infty}\frac{(aq)_n}{(bq)_n}t^n.$$
Replacing $q$ by $q^2$ and letting $t=q$, $a=1$ in the above transformation, we get
\begin{align}\label{fine1}
F(1,0,q:q^2)=\frac{1}{1-q}\sum_{n=0}^{\infty}\frac{(-1)^nq^{n^2+n+n}}{(q^3;q^2)_n}.
\end{align}
Equations \eqref{before fine} and \eqref{fine1} together imply that
\begin{align}
	\sigma^*(q)&=-qF(1,0,q:q^2).\nonumber
\end{align}
Now applying the transformation \cite[p.~3, Equation (4.3)]{fine}
\begin{align}
	F(a,b;t:q)=\frac{1}{1-t}\left(1+\frac{b-a}{(1-bq)}\sum_{n=1}^\infty\frac{(aq)_{n-1}}{(bq^2)_{n-1}}(tq)^n\right),\nonumber
\end{align}
(with replacing $q$ by $q^2$ and $a=1$, $b=1$, $t=q$), we arrive at
\begin{align*}
	\sigma^*(q)&=-\frac{q}{1-q}\left(1-\sum_{n=1}^{\infty}(q^2;q^2)_{n-1}q^{3n}\right).
\end{align*}
Hence the proof.

\end{proof}

\subsection{Quantum $q$-series identity}\label{section: quantum series}
\begin{proof}[\textbf{Theorem \textup{\ref{quantum series}}}][]
	If $q$ is a primitive $N$th root of unity, then for $n\le N-1$, we have \cite[Equation (2.3)]{lovejoy}
\begin{align*}
	\sum_{n=0}^{N-1}(b;q)_nz^n=b^{N-1}\sum_{n=0}^{N-1}(b;q)_n(q/z;q)_n(z/b)^nq^{-n^2-n}.
\end{align*}
First changing the base  $q$ by $q^2$ and then letting $z=q^3$ and $b=q^2$ in the above identity, we deduce that
\begin{align}\label{basic1}
	\sum_{n=0}^{N-1}(q^2;q^2)_n q^{3n}&=q^{-2}\sum_{n=0}^{N-1}(q^2;q^2)_n(1/q;q^2)_nq^{-2n^2-n}\nonumber\\
&=q^{-2}\sum_{n=0}^{N-1}(-1)^n(q^{-2};q^{-2})_n(1/q;q^2)_nq^{-n^2},
\end{align}
where in the last step we used the following fact
\begin{align}\label{basic}
	(aq^{-1};q^{-1})_n=(q/a;q)_n(-a)^nq^{-\frac{n(n+1)}{2}},
\end{align}
with replacing $q$ by $q^2$ and letting $a=1$. Again employing \eqref{basic} but with replacing $q$ by $q^2$ and letting $a=q^3$ in \eqref{basic1}, we find that
\begin{align}\label{basic2}
\sum_{n=0}^{N-1}(q^2;q^2)_n q^{3n}=\sum_{n=0}^{N-1}(q^{-2};q^{-2})_n(q;q^{-2})q^{-2n-2}.
\end{align}
We now need the following special case of Sears transformation \cite[Equation (4.3)]{lovejoy}
\begin{align*}
	\sum_{n=0}^{N-1}\frac{(q^{1-N};q)_n(b;q)_n(c;q)_n}{(q;q)_n(d;q)_n}q^n=c^{N-1}\sum_{n=0}^{N-1}\frac{(-1)^n(q^{1-N};q)_n(d/b;q)_n(c;q)_n}{(q;q)_n(d;q)_n}\left(\frac{b}{c}\right)^nq^{Nn-\frac{n(n+1)}{2}}.
\end{align*}
Replacing $q$ by $q^2$, then setting $b=q^2$, $c=1/q$, and letting $d\to0$, and finally taking $q$ to be a primitive $N$th root of unity, we obtain
\begin{align}\label{basic3}
\sum_{n=0}^{N-1}(q^{2};q^{2})_n(q^{-1};q^{2})_nq^{2n}&=q	\sum_{n=0}^{N-1}(1/q;q^2)_n(-1)^nq^{-n^2-n}q^{3n}\nonumber\\
	&=	q\sum_{n=0}^{N-1}(q;q^{-2})_n,
\end{align}
where in last step we invoked \eqref{basic} with $q\rightarrow q^2$ and $a\rightarrow q^3$. Replacing $q$ by $q^{-1}$ in \eqref{basic3} and substitute the resultant in \eqref{basic2}, we are led to
\begin{align*}
\sum_{n=0}^{N-1}(q^{2};q^{2})_n q^{3n}&=q^{-3}\sum_{n=0}^{N-1}(q^{-1};q^{2})_n\\
&=q^{-3}\left(1+(1-q^{-1})\sum_{n=1}^{N-1}(q;q^{2})_{n-1}\right)\\
&=q^{-3}\left(1-\frac{1-q}{q}\sum_{n=1}^{N-2}(q;q^{2})_{n}\right).
\end{align*}
Taking $N\to\infty$ on both sides yields
\begin{align*}
\sum_{n=0}^{\infty}(q^{2};q^{2})_nq^{3n}&=q^{-3}\left(1-\frac{1-q}{q}\sum_{n=1}^{\infty}(q;q^{2})_{n}\right)\\
&=q^{-3}\left(1+\frac{1-q}{q}\sigma^*(q)\right),
\end{align*}
where the last step follows upon invoking the second identity from \eqref{quantum2} which is valid for $q$ being a primitive odd root of unity.  The required identity \eqref{quantum series eqn} now follows upon using \eqref{quantum1} and simplifying the expressions.
\end{proof}

\subsection{Asymptotics of partition functions}\label{section: asymptotics}

\begin{lemma}
	We have,
$$(1-q^2)\big(\mathcal{K}(q)+q^8\big)\succeq0.$$
Here, and through out the sequel, by $\displaystyle\sum_{n=0}^\infty a(n)q^n\succeq 0$, we mean $a(n)\geq0$ for all $n\geq0$.
\end{lemma}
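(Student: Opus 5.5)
The plan is to expand $(1-q^2)\mathcal{K}(q)$ termwise from \eqref{kn}, isolate the single summand that produces negative coefficients, and show that the remaining summands (together with $(1-q^2)q^8$) dominate it coefficientwise.

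\textbf{Step 1 (splitting off the bad term).} For $n\ge1$ we have $(1-q^2)/(q^2;q^2)_n=1/(q^4;q^2)_{n-1}$. Hence
\begin{align*}
(1-q^2)\mathcal{K}(q)=\frac{q^3(1-q^2)}{1-q^3}+\sum_{n\ge1}\frac{q^{2n+3}}{(1-q^{2n+3})(q^4;q^2)_{n-1}}.
\end{align*}
Every term in the sum over $n\ge1$ has nonnegative coefficients. The $n=0$ term is
\begin{align*}
q^3(1-q^2)\sum_{k\ge0}q^{3k}=\sum_{k\ge0}\left(q^{3k+3}-q^{3k+5}\right).
\end{align*}
So its only negative coefficients are $-1$ at the exponents $m=3k+5$, namely $m=5,8,11,14,\dots$. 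The extra summand $(1-q^2)q^8=q^8-q^{10}$ contributes $+1$ at $8$ and $-1$ at $10$. It therefore suffices to show that the series
\begin{align*}
T(q):=\sum_{n\ge1}\frac{q^{2n+3}}{(1-q^{2n+3})(q^4;q^2)_{n-1}}
\end{align*}
has coefficient at least $1$ at every $m\equiv 2\pmod 3$ with $m\ge5$, $m\neq8$, and also at $m=10$.

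\textbf{Step 2 (a combinatorial reading of $T(q)$).} The coefficient of $q^m$ in $T(q)$ counts pairs $(n,\lambda)$. Here $n\ge1$, and $\lambda$ is a partition of $m$ with at least one part equal to $2n+3$, all other parts even and lying in $\{4,6,\dots,2n\}$. I would exhibit one such representation for each required $m$.

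\textbf{Step 3 (case analysis on $m$).}
\begin{itemize}
\item If $m\ge5$ is odd, take $n=(m-3)/2$ and $\lambda=(m)$.
\item If $m$ is even and divisible by $5$ (this includes $m=10$ and $m=20$), take $n=1$ and $\lambda=(5,5,\dots,5)$.
\item If $m\equiv2\pmod4$ and $m\ge14$, write $m=2(2n+3)$ with $n=(m-6)/4\ge2$, and take $\lambda=(2n+3,2n+3)$.
\item If $m\equiv0\pmod4$ and $m\ge24$, write $m=2(2n+3)+6$ with $n=(m-12)/4\ge3$. Then $6\le 2n$, so the part $6$ is allowed, and take $\lambda=(2n+3,2n+3,6)$.
\end{itemize}
The only remaining target exponents are the even $m\equiv2\pmod3$ with $m\equiv0\pmod4$ and $m<24$. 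These are $m=8$ and $m=20$. Here $m=8$ is already compensated by the added $q^8$, and $m=20$ is covered by the multiples of $5$. Combining Steps 1–3 then gives $(1-q^2)(\mathcal{K}(q)+q^8)\succeq0$.

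\textbf{Main obstacle.} The main difficulty is making this case split exhaustive, in particular the exponents $m\equiv0\pmod 4$, where one needs $n\ge3$ so that the part $6$ is available. I would settle it by listing the small exponents explicitly, checking them by hand or directly from the first few coefficients of $T(q)$, and verifying that $8$ is the only exponent that genuinely requires the correction $q^8$. This is what explains the appearance of $q^8$ in the statement.
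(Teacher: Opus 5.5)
Your proof is correct and follows essentially the same route as the paper: you split off the $n=0$ summand of $(1-q^2)\mathcal{K}(q)$, reduce to showing that the coefficients of $T(q)$ are at least $1$ at $m=5,10$ and at every $m\equiv 2 \pmod 3$ with $m\ge 11$, and then exhibit explicit witness partitions. Only the witnesses differ (e.g.\ you use the single part $(m)$ for odd $m$, and $(2n+3,2n+3,6)$ where the paper uses $7+7+7+7+4+\cdots+4$), and your case split is indeed exhaustive.
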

 \begin{proof}
	Note that
	\begin{align*}
		&(1-q^2)\big(\mathcal{K}(q)+q^8\big)\\
		&=(1-q^2)\sum_{n=0}^{\infty}\frac{q^{2n+3}}{(1-q^{2n+3})(q^2;q^2)_n}	+q^8(1-q^2)\\
		&=\frac{(1-q^2)q^3}{(1-q^3)}+\sum_{n=1}^{\infty}\frac{q^{2n+3}}{(1-q^{2n+3})(q^4;q^2)_{n-1}}	+q^8(1-q^2)\\
		&=(q^3-q^5)(1+q^3+q^6+q^9+\dots)+\sum_{n=1}^{\infty}\frac{q^{2n+3}}{(1-q^{2n+3})(q^4;q^2)_{n-1}}	+q^8-q^{10}\\
		&=(q^3+q^6+q^9+\dots)-(q^5+q^8+q^{11}\dots)+\sum_{n=1}^{\infty}\frac{q^{2n+3}}{(1-q^{2n+3})(q^4;q^2)_{n-1}}	+q^8-q^{10}\\
		&=(q^3+q^6+q^9+\dots)+\left(\sum_{n=1}^{\infty}\frac{q^{2n+3}}{(1-q^{2n+3})(q^4;q^2)_{n-1}}-(q^{11}+q^{14}+\dots)\right)-q^5-q^{10}.
	\end{align*}		
	Note that the series $\sum_{n=1}^{\infty}\frac{q^{2n+3}}{(1-q^{2n+3})(q^4;q^2)_{n-1}}$ is the generating function for  $T(n)$, where $T(n)$ denotes the number of partitions of $n$ having largest part odd and greater than $3$, while all remaining parts are even  (greater than or equal to $4$), and no part is consecutive to the largest odd part. It can be checked that $T(5)=1=T(10)$, and hence the coefficients of $q^5$ and $q^{10}$ on the right-hand side are zero. Moreover, every exponent of $q$ occurring in
	the expression $$(q^{11}+q^{14}+q^{17}+\dots)$$  is of the form $(3n+5)$, where $n\ge 2$. Therefore, in order to prove $(1-q^2)\big(\mathcal{K}(q)+q^8\big)\succeq 0$, it is enough to show that $T(3n+5)\ge 1$ for $n\ge2$. To this end, we exhibit, for every $n\geq2$, at least one partition of $3n+5$ counted by $T(3n+5)$. We have,
	$$3n+5 = 
	\begin{cases}  
		(2n+3)+(n+2), & \text{if } \text{$n$ is even}, \\
		(3k+4)+(3k+4), & \text{if } n=2k+1\ \text{and $k$ is odd}, \\
		{5+5+5+5}, & \text{if } n=2k+1\  \text{and}\  k=2,\\
		(7+7+7+7)+\underbrace{(4+4+4+\dots)}_{(3\ell-5)\ \text{copies}}, & \text{if } n=2k+1 \ \text{and}\  k=2\ell\ge 4. \end{cases}
		$$
	Each of the above partitions satisfies the defining conditions for $T(n)$. Hence $T(3n+5)\geq1,\ n\geq2$, which  completes the proof.
\end{proof}

We now determine the first two terms in the asymptotic expansion of $\sigma^*(-e^{-z})$ as $z\to 0$.

\begin{lemma}\label{asymptotic of sigma*}
	Let $q = e^{-z}$ and $\Delta\ge0$. Then, as $z\rightarrow 0$ in $R_\Delta$, we have
	\begin{align}\label{asymptotic of sigma* eqn}
		\sigma^*(-q)\sim 1-3z,
	\end{align}
	and 
		\begin{align}\label{asymptotic of sigma* eqn 2}
		\sigma^*(q)\sim -1.
	\end{align}
\end{lemma}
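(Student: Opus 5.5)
We prove the two statements separately. For \eqref{asymptotic of sigma* eqn} we use the defining series \eqref{sigma*} directly. For \eqref{asymptotic of sigma* eqn 2} we use Cohen's representation \eqref{sigma* repres}, because the series \eqref{sigma*} has denominators $(q;q^2)_n\to0$ as $q\to1$.

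\emph{Proof of \eqref{asymptotic of sigma* eqn}.} Replacing $q$ by $-q$ in \eqref{sigma*} and using $(-1)^{n^2}=(-1)^n$ gives
\begin{align*}
\sigma^*(-q)=\sum_{n=1}^\infty \frac{q^{n^2}}{(-q;q^2)_n}.
\end{align*}
For fixed $n$ and $q=e^{-z}$ we expand the logarithm of the $n$-th term:
\begin{align*}
-n^2z-\sum_{k=0}^{n-1}\log\bigl(1+e^{-(2k+1)z}\bigr)=-n\log 2-n^2z+\frac{z}{2}\sum_{k=0}^{n-1}(2k+1)+O_n(z^2)=-n\log2-\frac{n^2z}{2}+O_n(z^2).
\end{align*}
So the $n$-th term equals $2^{-n}\bigl(1-\tfrac{n^2}{2}z\bigr)+O_n(z^2)$. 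Since $\sum_{n\ge1}2^{-n}=1$ and $\sum_{n\ge1}n^22^{-n}=6$, summing formally gives $1-3z$.

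To justify this, fix $N$ and treat the terms $n\le N$ by the expansion above. For the tail $n>N$ we need a bound, uniform for $z\in R_\Delta$ small, of the shape $|q^{n^2}/(-q;q^2)_n|\ll \rho^{-n}e^{-cn^2\operatorname{Re} z}$ with some $\rho>1$, together with a second-order Taylor remainder bound. This will show the tail contributes $o(z)$ after letting $N\to\infty$ slowly.

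The key input is a lower bound for $|1+q^j|$ in the cone $R_\Delta$. Write $z=x+iy$, so $|q^j|=e^{-jx}$ and $|\arg q^j|\le \Delta jx$. When $jx$ is small, $q^j$ lies near $1$ and $|1+q^j|$ is close to $2$. When $jx$ is large, $|q^j|$ is small and $|1+q^j|\ge 1-e^{-jx}$. In the intermediate range, $|1+q^j|\ge c_\Delta>0$ after shrinking $z$. Combining these, the first $\asymp 1/x$ factors of $(-q;q^2)_n$ each have modulus at least $3/2$, while every factor has modulus bounded below. This gives exponential decay in $n$ up to $n\asymp 1/x$, and beyond that the numerator factor $e^{-n^2x}$ dominates.

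\emph{Proof of \eqref{asymptotic of sigma* eqn 2}.} By \eqref{sigma* repres},
\begin{align*}
\sigma^*(q)=-q-\sum_{n\ge1}(q^2;q^2)_nq^{n+1}.
\end{align*}
The first term tends to $-1$, so it suffices to show that the remaining sum is $o(1)$ in $R_\Delta$. For $k\le \delta/|z|$ we have $|1-q^{2k}|\le Ck|z|$, which makes the early products tiny. For the long tail we will instead pass to the Fine-type form \eqref{representation 2}, or to the telescoping identity
\begin{align*}
\sum_{n\ge0}(q^2;q^2)_nq^{2n+2}=1-(q^2;q^2)_\infty,
\end{align*}
in order to rewrite the sum with manifestly small terms. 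A cross-check at $q=1$ comes from \eqref{quantum2}, where $-\sum_n(q;q^2)_n$ equals $-1$.

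\emph{Main obstacle.} The hardest step is the uniform control of the tail of the series for $\sigma^*(q)$ in $R_\Delta$. The crude bound $|(q^2;q^2)_n|\le (-|q|^2;|q|^2)_\infty=e^{O(1/x)}$ is too weak to be beaten by the smallness of the first factors. If the splitting argument fails, the fallback is to use representation \eqref{representation 1}, $-\sum_n \frac{q^{2n+1}}{1-q^{2n+1}}(q^{2n+2};q^2)_\infty$, and run a Euler--Maclaurin-type analysis of $\log(q^{2n+2};q^2)_\infty$ around its saddle. Only the leading constant $-1$ is required for the later Tauberian application.
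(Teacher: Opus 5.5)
Your formal expansion for \eqref{asymptotic of sigma* eqn} is correct. On the real axis it closes easily. For $0<q<1$, AM--GM gives $e^{-w/2}\le\frac{1+e^{-w}}{2}\le 1$, hence $e^{-n^2x}\le 2^nT_n\le e^{-n^2x/2}$ for $T_n:=q^{n^2}/(-q;q^2)_n$. So $|T_n-2^{-n}|\le n^2 2^{-n}x$, and dominated convergence yields $1-3x+o(x)$.

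The gap is the cone $R_\Delta$, which the lemma asserts for every $\Delta$. There your bound $|T_n|\ll\rho^{-n}e^{-cn^2\operatorname{Re} z}$ is not merely unproved but false once $\Delta$ is large. A lower bound $|1+q^{2k+1}|\ge c_\Delta$ with $c_\Delta<1$ cannot stop the factors with $(2k+1)y$ near odd multiples of $\pi$ from making $(-q;q^2)_n$ exponentially small. Write $z=x+iy$ with $y=\Delta x$. To leading order in $1/\Delta$ one has $\sum_{k<n}\log|1+q^{2k+1}|\approx\frac{1}{2y}\int_0^{2ny}\log|2\cos(s/2)|\,ds$, which is about $-1.01/(2y)$ at $2ny=4\pi/3$. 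Meanwhile $|q^{n^2}|$ there is only $e^{-O(1/(\Delta y))}$. Concretely, at $z=0.005+0.1i\in R_{20}$ a direct computation gives $|T_{21}|\approx 5$, and along this ray $\max_n|T_n|$ grows like $e^{c/|z|}$ with some $c>0$. So $\sigma^*(-q)\approx 1$ is the result of massive cancellation, and no termwise or dominated-convergence argument with this series can work in wide cones.

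For \eqref{asymptotic of sigma* eqn 2} you give options rather than an argument, and they hit the same wall. For large $\Delta$, $\prod_{k\le n}|1-q^{2k}|$ reaches size $e^{c/|z|}$ near $2n|z|\approx 5\pi/3$. Hence the terms of \eqref{sigma* repres}, and likewise of \eqref{representation 1}, are themselves exponentially large in $R_\Delta$. The telescoping identity involves $q^{2n+2}$ rather than $q^{n+1}$, so it does not rewrite the sum you need. On the real axis, your early-product estimate together with $0<(q^2;q^2)_n\le(q^2;q^2)_m$ for $n\ge m$ does finish the job.

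The missing idea is to abandon the $q$-hypergeometric series for a representation whose terms are uniformly small in the cone. The paper uses the Andrews--Dyson--Hickerson Hecke-type double series, whose exponents are positive on the summation range $0\le j\le n-1$. After $n\mapsto n+j+1$ it becomes a signed combination of sums $\sum_{n,j\ge0}g\bigl((n+a,j+b)\sqrt z\bigr)$ with $g(x,y)=e^{-12x^2-12xy-y^2}$. Since $12x^2+12xy+y^2$ is positive on the closed quadrant, $g$ has sufficient decay on $D_\theta\times D_\theta$ for every $\theta<\pi/4$. The two-dimensional Euler--Maclaurin formula (Proposition~\ref{EMS}) then applies uniformly for $\sqrt z\in D_\theta$, i.e.\ in every $R_\Delta$.
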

\begin{proof}
We start with the following representation of $\sigma^*(q)$, given by Andrews, Dyson and  Hickerson \cite[Theorem 6]{adh},
\begin{align}
\sigma^{*}(-q)&=\sum_{n\ge 1}^{}(-1)^nq^{n(3n-1)}(1+q^{2n})\sum_{j=0}^{2n-1}(-q)^{-\frac{j(j+1)}{2}}.\nonumber
\end{align}
Note that the above expression can be recast as
\begin{align}\label{a1+a2}
\sigma^{*}(-q)&=A_1(q)+A_2(q),
\end{align}
where 
\begin{align}
A_1(q):=\sum_{n\ge 1}^{}(-1)^nq^{n(3n-1)}(1+q^{2n})\sum_{j=0}^{n-1}(-1)^{{j(2j+1)}}q^{-{j(2j+1)}},\nonumber
\end{align}
and 
\begin{align}
A_2(q):=\sum_{n\ge 1}^{}(-1)^nq^{n(3n-1)}(1+q^{2n})\sum_{j=0}^{n-1}(-1)^{{(j+1)(2j+1)}}q^{-{(j+1)(2j+1)}}.\nonumber
\end{align}
Rewriting $A_1(q)$ further, we obtain 
\begin{align}
A_1(q)&=q^{\frac{1}{24}}\sum_{n\ge 1}^{}(-1)^nq^{3\big(n-\frac{1}{6}\big)^2}\sum_{j=0}^{n-1}(-1)^{{j(2j+1)}}q^{-{2(j+\frac{1}{4})^2}}\nonumber\\
&\quad+q^{\frac{1}{24}}\sum_{n\ge 1}^{}(-1)^nq^{3\big(n+\frac{1}{6}\big)^2}\sum_{j=0}^{n-1}(-1)^{{j(2j+1)}}q^{-{2(j+\frac{1}{4})^2}}\nonumber\\
&=q^{\frac{1}{24}}\sum_{n,j\ge 0}^{}(-1)^{n+1}\left[q^{3\big(n+\frac{7}{12}\big)^2+\big(j+\frac{1}{4}\big)^2+6\big(n+\frac{7}{12}\big)\left(j+\frac{1}{4}\right)}+q^{3\big(n+\frac{11}{12}\big)^2+\big(j+\frac{1}{4}\big)^2+6\big(n+\frac{11}{12}\big)\big(j+\frac{1}{4}\big)}\right],\nonumber\\
&=q^{\frac{1}{24}}\sum\limits_{\substack{r\in \{0,1\} \\ \delta\in \{3,5\}}}^{}(-1)^{r+1}\sum_{n,j\ge 0}^{} g\left(\left(n+\frac{12r+2\delta+1}{24},j+\frac{1}{4}\right)\sqrt{z}\right),\nonumber
\end{align}
where the penultimate step follows upon replacing $n$ by $n+j+1$ and simplifying expressions. Here the function $g(x,y)$ is defined as $g(x,y):=e^{-12x^2-y^2-12xy}$. Now we apply Proposition \ref{EMS} with the above function $g(x,y)$ and letting $N=3$ and performing the necessary simplifications, we deduce that
\begin{align}\label{A1 final}
A_1(q)=e^{-\frac{z}{24}}\left(\frac{1}{2}-\frac{1}{2}\sqrt{\frac{\pi}{z}}+\frac{23}{48}\sqrt{\pi z}-\frac{71}{48}z+O(z^{\frac{3}{2}})\right).
\end{align}
Similarly, we can evaluate $A_2(q)$ as
\begin{align}\label{A2 final}
A_2(q)=e^{-\frac{z}{24}}\left(\frac{1}{2}+\frac{1}{2}\sqrt{\frac{\pi}{z}}-\frac{23}{48}\sqrt{\pi z}-\frac{71}{48}z+O(z^{\frac{3}{2}})\right).
\end{align}
Substituting \eqref{A1 final} and \eqref{A2 final} in \eqref{a1+a2}, expanding the exponential functions, and simplifying the resulting expressions, we arrive at \eqref{asymptotic of sigma* eqn}.

Equation \eqref{asymptotic of sigma* eqn 2} follows from the same argument as above.
\end{proof}

\begin{proof}[\textbf{Theorem \textup{\ref{asymptotic of kn}}}][]
Let us define
	\begin{align*}
		\mathcal{K}_{e}(q):=\sum_{n=1}^{\infty}k(2n)q^{2n}=\frac{\mathcal{K}(q)+\mathcal{K}(-q)}{2},
	\end{align*}
	and 
		\begin{align*}
		\mathcal{K}_{o}(q):=\sum_{n=1}^{\infty}k(2n-1)q^{2n-1}=\frac{\mathcal{K}(q)-\mathcal{K}(-q)}{2}.
	\end{align*}
Invoking Theorem \ref{sigma* as generating function}, we obtain
	\begin{align}\label{keqm}
		\mathcal{K}_{e}(q)&=\frac{1}{2(q^2;q^2)_\infty}\left(\frac{(1-q)}{q}\sigma^{*}(q)-\frac{(1+q)}{q}\sigma^{*}(-q)+2	\right).
	\end{align}
An application of Lemma \ref{asymptotic of sigma*} implies that
\begin{align}\label{keq1}
	\frac{1-q }{q}\sigma^*(q)\sim -z,\ \ \ \ \ \frac{1+q }{q}\sigma^*(-q)\sim 2-5z.
\end{align}
Employing \eqref{keq1} along with the fact
\begin{align*}
\frac{1}{(q^2;q^2)_\infty}\sim \sqrt{ \frac{z}{\pi}}e^{\frac{\pi^2}{12z}},
\end{align*}
in \eqref{keqm}, we deduce that
\begin{align*}
\mathcal{K}_{e}(q)\sim 2z\sqrt{\frac{z}{\pi}}e^{\frac{\pi^2}{12z}}.
\end{align*}
This implies that
\begin{align*}
\mathcal{K}_{e}(q^{\frac{1}{2}})\sim z\sqrt{\frac{z}{2\pi}}e^{\frac{\pi^2}{6z}}.
\end{align*}
In a similar way, we can prove that 
\begin{align*}
q^{-\frac{1}{2}}	\mathcal{K}_{o}(q^{\frac{1}{2}})\sim\frac{1}{ (q;q)_\infty}.
\end{align*}
Now note that
$$\mathcal{K}_{e}(q^{\frac{1}{2}})=\sum_{n=1}^{\infty} k(2n)q^{n},\qquad
\mathrm{and}\qquad q^{-\frac{1}{2}}\mathcal{K}_{o}(q^{\frac{1}{2}})=\sum_{n=1}^{\infty} k(2n-1)q^{n}.$$
Both of the required asympotitc expansions now follow upon invoking Proposition \ref{ITT}, first with $\lambda=\frac{1}{\sqrt{2\pi}}$, $\beta=\frac{3}{2}$,  and $\gamma=\frac{\pi^2}{6}$ and then with $\lambda=\frac{1}{\sqrt{2\pi}}$, $\beta=\frac{1}{2},$ and $\gamma=\frac{\pi^2}{6}.$

\end{proof}

\begin{proof}[\textbf{Corollary \textup{\ref{Len asymptotic}}}][]
	First of all, from \eqref{Le generating}, it is easy to see that
	\begin{align*}
(1-q^2)(-q;q)_\infty\mathcal{S}_{\mathcal{EO}_{ud}}(-q)\succeq 0.
\end{align*}
This implies that $\mathcal{L}_{e}(n)$ is a weakly increasing sequence. 

Let $q=e^{-z}$. As $z\to0$, we need the following fact that \cite[p.~952, Equation (25)]{Zagier}
\begin{align*}
	H(q)\sim L(-1,\chi),
\end{align*} 
where $L(s,\chi)=\sum_{n\geq1}\chi(n)/n^{s}$ is the Dirichlet $L$-function associated with the unique primitive character $\chi(.)$ of conductor $12$. Note that $L(-1,\chi)=-2$. This can be seen by using the well-known results \cite{iw},  \cite[Proposition 4.1]{wash}
\begin{align*}
L(1-k,\chi)=-\frac{B_{k,\chi}}{k}, \qquad \mathrm{and}\qquad B_{k,\chi}=f^{k-1}\sum_{a=1}^f \chi(a)B_n\left(\frac{a}{f}\right),
\end{align*}
where $f$ is the conductor, $B_n$ are the classical Bernoulli polynomials and  $B_{k,\chi}$ are generalized Bernoulli polynomials:
\begin{align*}
\sum_{a=1}^f\frac{\chi(a) t e^{at}}{e^{ft}-1}=\sum_{n=0}^\infty B_{n,\chi}\frac{t^n}{n!}.
\end{align*}
We also need \cite[p.~163, Theorem 7.15]{titch}
$$\sum_{n=1}^\infty d(n)q^n \sim \frac{1}{z} (\gamma-\log z) + \frac{1}{4},$$
and
\begin{align*}
	(q;q)_\infty\sim \sqrt{\frac{2\pi}{z}}e^{-\frac{\pi^2}{6z}} \qquad \mathrm{and}
\qquad	\frac{1}{(q;q^2)_\infty}\sim {\frac{1}{\sqrt{2}}}e^{\frac{\pi^2}{12z}}.
\end{align*}
Using all these results in Theorem \ref{sptEOud thm} with letting $q=e^{-t}$,  we deduce that
\begin{align}\label{last}
(-q;q)_\infty\mathcal{S}_{\mathcal{EO}_{ud}}(-q)\sim \frac{1}{2}\sqrt{\frac{z}{2\pi}}e^{\frac{\pi^2}{6z}}.
\end{align}
Now the result immediately follows from \eqref{Le generating}, \eqref{last} and employing Proposition \ref{ITT} with $\beta=\frac{1}{2}$, $\lambda=\frac{1}{2\sqrt{2\pi}}$ and $\gamma= \frac{\pi^2}{6}$.
\end{proof}

\subsection{Combinatorics of $\mathcal{U}(q)$ and $\sigma^*(q)$ in terms of odd Ferrers graph}\label{section: combinatorics}
\begin{proof}[\textbf{Theorem \textup{\ref{combinatorics of U2}}}][]
Let us take the following example of self conjugate colored Durfee odd Ferrers graph:
\begin{align*}
\begin{array}{r *{9}{c}} 
	&	\textcolor{blue}{2}& 	\textcolor{blue}{2}& 	\textcolor{blue}{2}& 	\textcolor{blue}{1}&  	\textcolor{blue}{1}& 	\textcolor{red}{1} & 	\textcolor{blue}{1} &	\textcolor{blue}{1}&  \\
	\cline{3-4}
	&		\textcolor{blue}{2}& 	\multicolumn{1}{|c}{	\textcolor{blue}{2}} & \multicolumn{1}{c|}{	\textcolor{blue}{2}} & 	\textcolor{blue}{2}  &  	\textcolor{blue}{2} & 	\textcolor{red}{2}  &  	\textcolor{blue}{2} &   \\
	&		\textcolor{blue}{2}& \multicolumn{1}{|c}{	\textcolor{blue}{2}} & \multicolumn{1}{c|}{	\textcolor{blue}{2}} & 	\textcolor{blue}{2} &  	\textcolor{blue}{2}&  	\textcolor{red}{2} &   &   \\
	\cline{3-4} 
	&		\textcolor{blue}{1} & 	\textcolor{blue}{2} & 	\textcolor{blue}{2} &  &   &   &   &   \\
	&		\textcolor{blue}{1} & 	\textcolor{blue}{2}& 	\textcolor{blue}{2} &  &   &   &   &   \\
	&		\textcolor{red}{1} & 	\textcolor{red}{2} & 	\textcolor{red}{2} &  &   &   &   &   \\
	&		\textcolor{blue}{1} & 	\textcolor{blue}{2} &  &   &   &   &   &   \\
	&		\textcolor{blue}{1}&  &  &   &   &   &   &   
\end{array}
\end{align*}

If the Durfee square has size $(n-1)$, then it contributes $2(n-1)^2$, while its border contributes $(4n-2)$, that is, Durfee square together with its border contributes a factor of $q^{2n^2}$. The rows below the Durfee square can be interpreted as partitions into odd parts, each less than or equal to $(2n -1)$, where the part $(2n-1)$, if present, may appear in two colors. Moreover, the columns to the right of the Durfee square can likewise be interpreted as an arbitrary partition into odd parts less than or equal to $(2n-1)$,  where the part $(2n-1)$, if present, may appear in two colors. Since the graph is self-conjugate, the parts to the right and below the Durfee square are generated by $$\frac{1}{(1-q^{4n-2})(q^2;q^4)_n}.$$ Hence, $\mathcal{U}(q^2)$ is the generating function for excess number of self conjugate colored Durfee odd Ferrers graphs with even size Durfee square  over odd size Durfee square.
\end{proof}

\begin{remark}
	Self-conjugate colored Durfee odd Ferrers graph can be generated by
 $
 	\sum_{n=1}^\infty\frac{q^{2n^2}}{(1-q^{4n-2})(q^2;q^4)_n}.
$
 Moreover, observe that it counts the total number of largest parts present in all the partitions counted by $c(n)$, where $c(n)$ denotes the number of partitions of $n$ into consecutive odd parts with even multiplicities.
\end{remark}

\begin{proof}[\textbf{Theorem \textup{\ref{U in terms of difference of odd Ferrers}}}][]
We first prove \eqref{new representation of U}. This follows as a direct application of ${}_2\phi_1$-transformation given in \eqref{2phi1} with	first replacing $q$ by $q^2$ and then letting $a= q^2$, $z=q^m$, $b=q^2$ and $c\rightarrow q^3$.

We now show the combinatorial interpretation of $\mathcal{U}(q)$.

	Consider the following Refined odd Ferrers graph of the partition $ 10+8+7+4+3+3+1$:
\begin{align*}
	\overbrace{
		\begin{array}{r *{10}{c}} 
           &	1& 	1 & 1& 1 & 1 & 1 & 1 & 1&1&1\\
			&	0& 	2&2 & 2  &  2 &   &   &  & &   \\
			&	1& 2 & 2& 2  &  &   &   &   & &\\
			&	 0& 2 & 2 &  &   &   &   &   & &\\
			&	1 & 2 &  &   &   &   &   &   & &\\
			&	1 &  2&  &   &   &   &   &   & &\\
			&	1 &  &   &   &   &   &   &   & &\\
	\end{array}}^{\text{$n$ number of 1's}}
\end{align*}
Let the first row contain $n$ ones. Note that the remaining rows which contain zeros correspond to even parts. Since no such row can be repeated, the corresponding even parts are distinct. Hence, the rows below the first row can be interpreted as a partition with parts less than or equal to $2n-1$ and having even parts distinct. Hence, 
$$\mathcal{U}(q)=\sum_{n=1}^{\infty}\frac{(q^2;q^2)_{n-1}}{(q;q^2)_n}q^n$$ 
is the generating function for excess number of Refined odd ferror graphs with even number of 0's on boundary over odd number of $0$'s on boundary.
\end{proof}

\begin{proof}[\textbf{Theorem \textup{\ref{U in terms of difference of simple}}}][]
Note that the term $$\frac{1}{(1-q^{2n-1})(q;q^2)_n}=\frac{1}{(1-q^{2n-1})^2(q;q^2)_{n-1}}$$ 
is the generating function for partitions into two colors with odd parts less than or equal to $(2n-1)$, where only the largest part may appear in both colors. Moreover,  $q^{n^2}=q^{1+3+\dots+(2n-1)}$ guarantees that each of the odd parts $1,3,\dots,(2n-1)$ occurs at least once.  Clearly, the number of such distinct odd parts is $n$. Therefore, combining these observations, we conclude that  $$\mathcal{U}(q)=-\sum_{n=0}^{\infty}\frac{(-1)^{n}q^{n^2}}{(1-q^{2n-1})(q;q^2)_n}$$ is the generating function for $w_o(n)-w_e(n)$.
\end{proof}

	\begin{proof}[\textbf{Theorem \textup{\ref{combinatorics of sigma*}}}][]
	Consider the following self conjugate Durfee odd Ferrers graph of the partition $10+10+8+5+3+1$:
	\begin{align*}
		\begin{array}{r *{8}{c}} 
			&	2& 	2 & 2& 1 & 1 & 1 & 1 & \\
			\cline{3-4}
			&	2& 	\multicolumn{1}{|c}{2} & \multicolumn{1}{c|}{2} & 2  &  2 &   &   &   \\
			&	2& \multicolumn{1}{|c}{2} & \multicolumn{1}{c|}{2} & 2  &  &   &   &   \\
			\cline{3-4} 
			&	1 & 2 & 2 &  &   &   &   &   \\
			&	1 & 2 &  &   &   &   &   &   \\
			&	1 &  &  &   &   &   &   & 
		\end{array}
	\end{align*}
As we discussed in the proof of Theorem \ref{combinatorics of U2}, a Durfee square of size $(n-1)$ together with its border contributes a factor of $q^{2(n-1)^2+4n-2}$.	The rows below the Durfee square can be interpreted as partitions into odd parts each less than or equal to $(2n -1)$. Similarly, if we read the columns to the right of the Durfee square, we obtain another arbitrary partition into odd parts each less than or equal to $(2n-1)$. Since the graph is self conjugate, the combined contribution from the rows below and the columns to the right of the Durfee square is $\frac{1}{(q^2;q^4)_n}$. Hence $\sigma^*(q^2)$ is the generating function for excess number of self conjugate Durfee odd Ferrers graphs with odd size Durfee square over even size Durfee square.
\end{proof}

\begin{proof}[\textbf{Theorem \textup{\ref{combinatorics of sigma* in refined}}}][]
	Consider the following Refined odd Ferrers graph 7+8+6+4+2
	\begin{align*}
	\overbrace{
		\begin{array}{r *{8}{c}} 
			&	1& 	1 & 1& 1 & 1 & 1 & 1 & \\
			&	0& 	2&2 & 2  &  2 &   &   &   \\
			&	0& 2 & 2& 2  &  &   &   &   \\
			&	 0& 2 & 2 &  &   &   &   &   \\
			&	0 & 2 &  &   &   &   &   &   
	\end{array}}^{ \text{$(n+1)$ number of 1's}}
	\end{align*}
	Let the first row contain $(n+1)$ entries, all of which are $1$'s.
	Clearly, all the entries in the first column, except the first, are zero. Moreover, all rows except the first represent an even part less than or equal to $2n$. Hence, $\sigma^*(q)=-\sum_{n=1}^{\infty}(q^2;q^2)_nq^{n+1}$ is the generating function for excess number of Refined odd Ferrers graphs where all the entries (except first) of first column are zero with odd in counting over even in counting.
\end{proof}

\subsection{Partition identities arising from Combinatorics of $\sigma^*(q)$ and $\mathcal{U}(q)$}

We have provided two new  combinatorial interpretations of the coefficients of $\sigma^*(q)$ in  Theorem \ref{combinatorics of sigma* in refined} and Corollary \ref{sigma* combin}. Previously, Andrews, Dyson and Hickerson \cite[p.~404]{adh}, and  Andrews, Garvan and Liang \cite[Corollary 2.3]{agl} also obtained such combinatorial interpretations. More precisely \cite[Corollary 2.3]{agl}, 
\begin{align*}
\mathscr{B}_e(n)-\mathscr{B}_o(n)=\mathscr{C}_3(n)-\mathscr{C}_1(n),
\end{align*}
where $\mathscr{B}_e(n)$ \textup{(}resp. $\mathscr{B}_o(n)$\textup{)} denote the number of partitions of $n$ with an odd number of smallest parts, and with an even \textup{(}resp. odd\textup{)} number of parts. Consider partitions into odd parts without gaps, then for
$j = 1$ or $3$, let $\mathscr{C}_j(n)$ denote the number of such partitions of $n$ in which the largest part is congruent to $j$ mod $4$.

Our results in Theorem \ref{combinatorics of sigma* in refined} and Corollary \ref{sigma* combin} immediately extend the above result by adding two new partition identities.
\begin{corollary}\label{sigma* results}
	We have
\begin{align*}
\mathscr{B}_e(n)-\mathscr{B}_o(n)=\mathscr{C}_3(n)-\mathscr{C}_1(n)	=\mathscr{D}_o(n)-\mathscr{D}_e(n)=\mathscr{E}_o(n)-\mathscr{E}_e(n),
\end{align*}
where  $\mathscr{D}_e(n)$ \textup{(}resp. $\mathscr{D}_o(n)$\textup{)} denote number of Refined  odd Ferrers graphs of size $n$, where all entries \textup{(}except first\textup{)} of the first column are zero and even \textup{(}resp. odd\textup{)}  in counting.  Moreover, $\mathscr{E}_e(n)$ \textup{(}resp. $\mathscr{E}_o(n)$\textup{)} counts the number of partitions of $n$ having smallest part odd and other parts even and distinct with even \textup{(}resp. odd\textup{)} in counting.
\end{corollary}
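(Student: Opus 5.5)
The four quantities in Corollary \ref{sigma* results} all equal coefficients of one generating function. My plan is to show that each difference equals the coefficient of $q^n$ in $\sigma^*(q)$, using a single sign convention. The first equality $\mathscr{B}_e(n)-\mathscr{B}_o(n)=\mathscr{C}_3(n)-\mathscr{C}_1(n)$ is \cite[Corollary 2.3]{agl}, so I would take it as given. I would also record from \cite{agl} (equivalently \cite[p.~404]{adh}) that this common value is the coefficient of $q^n$ in $\sigma^*(q)$, as normalized in \eqref{sigma*}. The one thing to verify here is that \cite{agl} uses the same normalization; otherwise I would insert the factor of $2$ mentioned in the footnote to \eqref{sigma*}. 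I would check this by comparing a few low-order coefficients. For instance, from \eqref{sigma*} we get $\sigma^*(q)=-q+q^4/((1-q)(1-q^3))-\cdots$, so the coefficient of $q$ is $-1$. The partition $1$ of $1$ has one part and one smallest part, so $\mathscr{B}_e(1)-\mathscr{B}_o(1)=-1$, which matches.

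Next I would identify $\mathscr{D}_o(n)-\mathscr{D}_e(n)$ using Theorem \ref{combinatorics of sigma* in refined}. As in its proof, a Refined odd Ferrers graph whose first-column entries (apart from the first) are all $0$ consists of a top row of $n+1$ ones together with $k$ distinct even rows, each at most $2n$. It therefore contributes $(-1)^k q^{n+1}\times(\text{even parts})$, and summing over graphs gives $\sum_{n\ge0}(q^2;q^2)_nq^{n+1}$ with sign $(-1)^k$. By Cohen's representation \eqref{sigma* repres}, this sum is $-\sigma^*(q)$.

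The main obstacle is deciding what the "in counting" parity refers to. If it is the number $k$ of zero rows, then the sum above is $\sum_n(\mathscr{D}_e(n)-\mathscr{D}_o(n))q^n=-\sigma^*(q)$, and the difference as stated would have the wrong sign. If instead it is the total number of rows $k+1$, then $\sum_n(\mathscr{D}_o(n)-\mathscr{D}_e(n))q^n=\sigma^*(q)$, as required. So I would fix the second convention, consistent with the wording "odd in counting over even in counting" in Theorem \ref{combinatorics of sigma* in refined}, and confirm it on small $n$. For example, with $n=1$ the only graph is the single entry $1$, which has one row (odd), so $\mathscr{D}_o(1)-\mathscr{D}_e(1)=1$. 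Is that right? The coefficient of $q$ in $\sigma^*$ is $-1$, so this gives the wrong sign. Hence the correct convention is to count the zero entries: with $k=0$ (even) at $n=1$, we get $\mathscr{D}_o(1)-\mathscr{D}_e(1)=-1$, matching. With this convention the sum over graphs reads $\sum_n(\mathscr{D}_e-\mathscr{D}_o)q^n=\sum_n(q^2;q^2)_nq^{n+1}$ with sign $(-1)^k$, which is $-\sigma^*$. So the sign bookkeeping must be settled against \eqref{sigma* repres} directly, and I would resolve it by low-order checks.

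Finally, for $\mathscr{E}_o(n)-\mathscr{E}_e(n)$ I would use \eqref{representation 1} of Theorem \ref{sigma* representations}. Expanding $\frac{q^{2n+1}}{1-q^{2n+1}}(q^{2n+2};q^2)_\infty$ gives partitions whose smallest part $2n+1$ is repeated and whose remaining parts are distinct even parts exceeding $2n+1$, each weighted by $(-1)^{\#\text{even parts}}$. With the overall minus sign this is Corollary \ref{sigma* combin}. Equating these coefficient identities with the first step yields the corollary, after the same parity-convention check on small cases as above.
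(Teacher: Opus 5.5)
Your proposal is correct and follows the paper's argument: each difference is identified with the coefficient of $q^n$ in $\sigma^*(q)$. This uses \cite[Corollary 2.3]{agl} (indeed $\sum_{n}(\mathscr{C}_3(n)-\mathscr{C}_1(n))q^n=\sigma^*(q)$ is immediate from \eqref{sigma*}), Theorem \ref{combinatorics of sigma* in refined} together with \eqref{sigma* repres}, and Corollary \ref{sigma* combin} from \eqref{representation 1}.

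In your $\mathscr{D}$ paragraph, two of the intermediate sign claims are mistaken. It is not true that the zero-row convention gives the wrong sign, and the total-row convention does not give $\sigma^*(q)$. Taking the parity of the number $k$ of zero rows gives $\sum_n(\mathscr{D}_o(n)-\mathscr{D}_e(n))q^n=\sigma^*(q)$ directly, as your $n=1$ check eventually confirms. Likewise, for $\mathscr{E}$ the relevant parity is that of the number of even parts.
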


We next record several partition identities that follow by combining our results about combinatorics of $\mathcal{U}(q)$ with identities previously established in the literature. To that end, we first recall the definition of M2-rank. To that end, we use the notation $\ell(\lambda)$ to denote the largest part of a partition, $\nu(\lambda)$ to denote the number of
parts, and $\lambda_0$ for the subpartition of an overpartition consisting of the odd nonoverlined parts. Then the $M2$-rank of an overpartition
$\lambda$ is given by
\begin{align*}
	\textup{M}2\textup{-rank}= \left\lceil {\frac{\ell(\lambda)}{2}}\right\rceil-\nu(\lambda)-\nu(\lambda_0)-\chi(\lambda),
\end{align*}
where $\chi(\lambda)=1$, if largest part of $\lambda$ is odd and non overlined and $\chi(\lambda)=0$ otherwise.

Our identities in \ref{hurwitz combinatorics eqn} and \eqref{U in terms of difference of odd Ferrers eqn}  along with the result of Bringmann and Lovejoy \cite[Corollary 1.1]{bl}, it is straightforward to see the following corollary.
\begin{corollary}\label{partition difference1}
	Let $\overline{P_e}(n)$ \textup{(}resp. $\overline{P_o}(n)$, $M2_e(n)$, $M2_o(n)$\textup{)} denote the number of overpartitions of $n$ with even rank \textup{(}resp. odd rank, even $M2$ rank, odd $M2$ rank\textup{)}. Then, we have
	\begin{align}
		v_{e}(n)-v_{o}(n)=u_{e}(2n)-u_{o}(2n)=\frac{1}{16}\big(\overline{P_e}(8n-1)-\overline{P_o}(8n-1)\big),\nonumber
	\end{align}
	and
	\begin{align}
		v_{o}(n)-v_{e}(n)=u_{o}(2n)-u_{e}(2n)=\frac{1}{8}\big(M2_e(8n-1)-M2_o(8n-1)\big).\nonumber
	\end{align}
\end{corollary}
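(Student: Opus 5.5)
We prove Corollary \ref{partition difference1} by reducing both chains to Hurwitz class numbers. The middle equalities $v_e(n)-v_o(n)=u_e(2n)-u_o(2n)$ and $v_o(n)-v_e(n)=u_o(2n)-u_e(2n)$ should follow from identities already established. By \eqref{U in terms of difference of odd Ferrers eqn} of Theorem \ref{U in terms of difference of odd Ferrers}, $v_e(n)-v_o(n)=H(8n-1)$ for $n\ge1$. By Corollary \ref{hurwitz combinatorics}, applied to the even integer $2n$, we get $u_e(2n)-u_o(2n)=H(4\cdot 2n-1)=H(8n-1)$. These two values agree, which gives the first equality in each chain; the second chain is just the first multiplied by $-1$.

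As a consistency check, we can also see this at the level of generating functions. By Theorem \ref{combinatorics of U2}, $\sum_m (u_e(m)-u_o(m))q^m=\mathcal{U}(q^2)$. Hence the coefficient of $q^{2n}$ on the left is the coefficient of $q^n$ in $\mathcal{U}(q)$. By Theorem \ref{U in terms of difference of odd Ferrers}, that coefficient is $v_e(n)-v_o(n)$. So the middle equality holds even without passing through class numbers.

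It remains to match $H(8n-1)$ with the overpartition quantities. Here I would quote Bringmann and Lovejoy \cite[Corollary 1.1]{bl}. In our normalization, their result should read
\[
\overline{P_e}(m)-\overline{P_o}(m)=16H(m)\quad\text{and}\quad M2_e(m)-M2_o(m)=-8H(m)
\]
for $m\equiv 7\pmod 8$, or in whatever form that reference states for this residue class. Taking $m=8n-1$ and dividing then gives $\tfrac1{16}(\overline{P_e}(8n-1)-\overline{P_o}(8n-1))=H(8n-1)$. Likewise, $\tfrac18(M2_e(8n-1)-M2_o(8n-1))=-H(8n-1)=v_o(n)-v_e(n)$.

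The main obstacle is bookkeeping with the cited result. I need to check that the constants $16$ and $8$ and the signs in \cite{bl} really apply to the progression $8n-1$. I also need to confirm that their $H$ uses the same normalization as ours, the one fixed by $\mathcal{U}(q)=\sum H(8n-1)q^n$ from \cite{abl}, for example whether $H(3)=1/3$ is weighted the same way. Once these are confirmed, the corollary is immediate.
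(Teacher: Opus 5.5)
Your proposal is correct and is essentially the paper's argument: the paper also gets the corollary by combining \eqref{hurwitz combinatorics eqn} (applied at $2n$) and \eqref{U in terms of difference of odd Ferrers eqn} with Bringmann--Lovejoy \cite[Corollary 1.1]{bl}, and your direct coefficient comparison between $\mathcal{U}(q^2)$ and $\mathcal{U}(q)$ is just a harmless shortcut for the middle equality. The constants you were unsure of are the right ones, and the normalization issue never arises because $8n-1$ is never of the form $3k^2$ or $4k^2$, so $H(8n-1)$ is an ordinary integer class number: for instance $m=7$ gives $\overline{P_e}(7)-\overline{P_o}(7)=16$ and $M2_e(7)-M2_o(7)=-8$ with $H(7)=1$, and $m=15$ gives $32$ and $-16$ with $H(15)=2$.
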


The second order mock theta function $A(q)$ is defined as \cite[p.~285]{mc}
\begin{align*}
A(q):=\sum_{n=0}^\infty N_A(n)q^n=\sum_{n=0}^\infty\frac{q^{(n+1)^2}(-q;q^2)_n}{(q;q^2)_{n+1}^2}=\sum_{n=0}^\infty\frac{q^{n+1}(-q^2;q^2)_n}{(q;q^2)_{n+1}}.
\end{align*}
Chen and Garvan \cite[Lemma 3.1]{cg} obtained the following congruence:
\begin{align*}
N_A(n)\equiv (-1)^{n+1}H(8n-1)\qquad (\mathrm{mod}\ 4).
\end{align*}
This result along with \eqref{U in terms of difference of odd Ferrers eqn} immediately yields the following congruence.
\begin{corollary}\label{partition difference2}
	For each positive integer $n>0$, we have
	\begin{equation}
		v_{e}(n)-v_{o}(n)\equiv(-1)^{n+1}N_A(n)\pmod{4}.\nonumber
	\end{equation}
\end{corollary}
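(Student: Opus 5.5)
The congruence should follow by chaining two earlier facts. The first is \eqref{U in terms of difference of odd Ferrers eqn} of Theorem \ref{U in terms of difference of odd Ferrers}, which gives, for every $n\geq1$,
\begin{align*}
v_e(n)-v_o(n)=H(8n-1).
\end{align*}
The second is the Chen--Garvan congruence quoted just before the statement:
\begin{align*}
N_A(n)\equiv(-1)^{n+1}H(8n-1)\pmod 4.
\end{align*}

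To combine them, multiply the Chen--Garvan congruence by $(-1)^{n+1}$. Since $(-1)^{2n+2}=1$, this gives
\begin{align*}
(-1)^{n+1}N_A(n)\equiv H(8n-1)\pmod 4.
\end{align*}
Substituting $H(8n-1)=v_e(n)-v_o(n)$ then yields the claimed congruence.

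The one point to check is that both congruences are statements about integers, so that reducing mod $4$ makes sense. Hurwitz class numbers are in general only rational, since they can take the values $1/2$ and $1/3$. For discriminants $-(8n-1)$, however, these occur only when $8n-1$ is $4$ times a square or $3$ times a square, and $8n-1\equiv 7\pmod 8$ is neither. Hence $H(8n-1)$ is an integer.

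This integrality is also visible from the first displayed identity, because $H(8n-1)$ equals a difference of counts and $\mathcal{U}(q)$ has integer coefficients by \eqref{new representation of U}. Likewise $N_A(n)$ is an integer, since $A(q)$ has integer coefficients.

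Finally, the index ranges agree: \eqref{U in terms of difference of odd Ferrers eqn} is stated for $n\geq1$, which is exactly the range in the corollary. With integrality and the index range settled, there is no real obstacle, and the proof is a two-line substitution.
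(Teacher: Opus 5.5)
Your proposal is correct and is exactly the paper's argument: it combines $v_e(n)-v_o(n)=H(8n-1)$ from Theorem \ref{U in terms of difference of odd Ferrers} with the Chen--Garvan congruence $N_A(n)\equiv(-1)^{n+1}H(8n-1)\pmod 4$. Your integrality check for $H(8n-1)$ is also correct. It is a harmless addition, since the identity already exhibits $H(8n-1)$ as a difference of counts.
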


\section{Smallest part partition function analogues}\label{section: spt function}

\begin{proof}[\textbf{Theorem \textup{\ref{sptEO thm}}}][]
The definition of $	\mathcal{S}_\mathcal{EO}(q)$ from \eqref{sptEO} implies that
\begin{align*}
		\mathcal{S}_\mathcal{EO}(q)&=\sum_{n=0}^{\infty}\frac{q^{2n+1}}{(q^2;q^2)_n(q^{2n+1};q^2)_\infty(1-q^{2n+1})}\\
	&=\frac{q}{(q;q^2)_\infty}\sum_{n=0}^{\infty}\frac{q^{2n}(q;q^2)_n}{(q^2;q^2)_n(1-q^{2n+1})}.
\end{align*}
Now employing Watson’s $q$-analog  of Whipple’s theorem \cite{bailey}
\begin{align*}
	&{}_8\phi_7 \left[ 
	\begin{matrix} 
		a, q\sqrt{a}, -q\sqrt{a}, d, e, f, g, q^{-N} \\ 
		\sqrt{a}, -\sqrt{a}, \frac{aq}{d}, \frac{aq}{e}, \frac{aq}{f}, \frac{aq}{g}, aq^{N+1} 
	\end{matrix} 
	; q, \frac{a^2 q^{2+N}}{defg} 
	\right] \\
	&= \frac{(aq, aq/fg; q)_N}{(aq/f, aq/g; q)_N} 
	{}_4\phi_3 \left[ 
	\begin{matrix} 
		aq/de, f, g, q^{-N} \\ 
		aq/d, aq/e, fgq^{-N}/a 
	\end{matrix} 
	; q, q 
	\right],
\end{align*}
with replacing $q$ by $q^2$ and then letting $a=q^2$, $d=f=g=q$ and also letting $e\to\infty$ and $N\to\infty$, we obtain 
\begin{align}\label{before abl}
	\mathcal{S}_\mathcal{EO}(q)&=\frac{(q; q^2)_{\infty}}{(q^2; q^2)_{\infty}^2} \sum_{n=0}^{\infty}\frac{(1+q^{2n+1})}{\left(1 - q^{2n+1}\right)^2}q^{(n+1)(2n+1)} \nonumber\\
	&=\frac{(q; q^2)_{\infty}}{(q^2; q^2)_{\infty}^2}\left[\sum_{n=0}^{\infty}\frac{q^{n(n+1)}}{\left(1 - q^{2n+1}\right)^2}q^{(n+1)^2}+\sum_{n=0}^{\infty}\frac{q^{n(n+1)+2n+1}}{\left(1 - q^{2n+1}\right)^2}q^{(n+1)^2}\right] \nonumber\\
	&=\frac{(q; q^2)_{\infty}}{(q^2; q^2)_{\infty}^2}\left[\sum_{n=0}^{\infty}\frac{q^{n(n+1)}}{\left(1 - q^{2n+1}\right)^2}q^{(n+1)^2}+\sum_{n=1}^{\infty}\frac{q^{n^2+3n-2}}{\left(1 - q^{2n-1}\right)^2}q^{(n-1)^2}\right] \nonumber\\
	&=\frac{(q; q^2)_{\infty}}{(q^2; q^2)_{\infty}^2}	\left[\sum_{n=0}^{\infty}\frac{q^{n(n+1)}}{\left(1 - q^{2n+1}\right)^2}q^{(n+1)^2}+\sum_{n=1}^{\infty}\frac{q^{n(n-1)}q^{(n-1)^2}}{\left(1 - q^{-2n+1}\right)^2}\right] \nonumber\\
	&=\frac{(q; q^2)_{\infty}}{(q^2; q^2)_{\infty}^2}	\left[\sum_{n=0}^{\infty}\frac{q^{n(n+1)}}{\left(1 - q^{2n+1}\right)^2}q^{(n+1)^2}+\sum_{n=-\infty}^{1}\frac{q^{n(n+1)}q^{(n+1)^2}}{\left(1 - q^{2n+1}\right)^2}\right] \nonumber\\
	&=\frac{(q; q^2)_{\infty}}{(q^2; q^2)_{\infty}^2}\sum_{n \in \mathbb{Z}}\frac{q^{n(n+1)}}{\left(1 - q^{2n+1}\right)^2}q^{(n+1)^2}.
\end{align}
We now need the following relation of Alfes, Bringmann and Lovejoy \cite[Equation (2.2)]{abl}
\begin{align*}
	N^\circ(a, b, z; q) 
	&= \frac{(-aq;q^2)_\infty (-bq; q^2)_\infty}{(q^2;q^2)_\infty (abq^2; q^2)_\infty} \sum_{n \in \mathbb{Z}} \frac{(-q/a;q^2)_\infty (-q/b; q^2)_n (-ab)^n q^{n^2+3n+1}}{(1 - zq^{2n+1}) (-aq;q^2)_\infty (-bq; q^2)_{n+1}}.
\end{align*}
Letting $a=0$, $b=-1$ and $z=1$ in the above identity and substituting it in \eqref{before abl}, we get
\begin{align*}
	\mathcal{S}_\mathcal{EO}(q)=\frac{1}{(q^2; q^2)_{\infty}}	N^{0}(0, -1; 1; q).
\end{align*}	
The required result now follows upon invoking \eqref{N in terms of U1} in the above identity.
\end{proof}

\begin{remark}
We here note that \eqref{before abl} provides the following representation for $\mathcal{U}(q)$:
\begin{align*}
\mathcal{U}(q)=\frac{(q; q^2)_{\infty}}{(q^2; q^2)_{\infty}} \sum_{n \in \mathbb{Z}} \frac{ q^{2n^2+3n+1}}{\left(1 - q^{2n+1}\right)^2}.
\end{align*}
Using this representation, Bringmann, Lovejoy, and Osburn \cite[Lemma 6.1]{blo} construct a suitable completion of $\mathcal{U}(q)$ that transforms as a weak Maass form of weight $\frac{3}{2}$ on $\Gamma_0(16)$.

\end{remark}

\begin{proof}[\textbf{Theorem \textup{\ref{sptOE thm}}}][]
From \eqref{sptOE}, observe that
\begin{align*}
		\mathcal{S}_{\mathcal{OE}}(q)=\sum_{n=1}^{\infty}\frac{q^{2n}}{(1-q^{2n})(q;q^2)_n(q^{2n};q^2)_\infty}\\
	=	\frac{1}{(q^2;q^2)_\infty}	\sum_{n=1}^{\infty}\frac{q^{2n}(q^2;q^2)_{n-1}}{(1-q^{2n})(q;q^2)_n}.
\end{align*}
Upon replacing $n$ by $n+1$, we obtain
\begin{align}\label{before rlnb}
	\mathcal{S}_{\mathcal{OE}}(q)&=\frac{q^2}{(q^2;q^2)_\infty}	\sum_{n=0}^{\infty}\frac{q^{2n}(q^2;q^2)_{n}}{(1-q^{2n+2})(q;q^2)_{n+1}}\nonumber\\
	&=\frac{q^2}{(1-q^2)(q^2;q^2)_\infty(1-q)}	\sum_{n=0}^{\infty}\frac{q^{2n}(q^2;q^2)_{n}(q^2;q^2)_{n}}{(q^4;q^2)_n(q^3;q^2)_{n}}.
\end{align}
We now need the following $q$-series identity of Andrews  \cite[p.~141, Theorem 1]{andrews rlnb}
\begin{align*}
\sum_{n=0}^{\infty} \frac{(B;q)_{n}(-Abq;q)_{n} q^{n}}{(-aq;q)_{n}(-bq;q)_{n}}& = \frac{-a^{-1}(B;q)_{\infty}(-Abq;q)_{\infty}}{(-bq;q)_{\infty}(-aq;q)_{\infty}} \sum_{m=0}^{\infty} \frac{(A^{-1};q)_{m} \left(Abq/a\right)^{m}}{\left(-B/a;q\right)_{m+1}}\\
&\qquad+ (1+b) \sum_{m=0}^{\infty} \frac{(-a^{-1};q)_{m+1} \left(-ABq/a;q\right)_{m} (-b)^{m}}{(-B/a;q)_{m+1} (Abq/a;q)_{m+1}}.
\end{align*}
Replacing  $q$ by $q^2$ and letting $a=-q$, $b=-q^2$, $B=q^2$, $A=1/q^2$ in the above identity and then substituting the resultant in \eqref{before rlnb}, we are led to
\begin{align}\label{before 2phi1}
\mathcal{S}_{\mathcal{OE}}(q)&=\frac{q^2}{(1-q^2)(q^2;q^2)_\infty(1-q)}\left[\frac{(q^2;q^2)_\infty(q^2;q^2)_\infty}{q(q^4;q^2)_\infty(q^3;q^2)_\infty}\sum_{m=0}^{\infty}\frac{(q^2;q^2)_m}{(q;q^2)_{m+1}}q^m\right. \nonumber\\
&	\qquad\left. +(1-q^2)\sum_{m=0}^{\infty}\frac{(\frac{1}{q};q^2)_{m+1}(q;q^2)_m}{(q;q^2)_{m+1}(q;q^2)_{m+1}}q^{2m} \right] \nonumber\\
&=	\frac{q}{(1-q)(q;q^2)_\infty}\sum_{m=0}^{\infty}\frac{(q^2;q^2)_m}{(q^3;q^2)_{m}}q^m-\frac{1}{(q^2;q^2)_\infty}\sum_{m=0}^{\infty}\frac{q^{2m+1}}{(1-q^{2m+1})^2}.
\end{align}
Invoking \eqref{2phi1} with replacing $q$ by $q^2$ and letting  $a=q^2$, $b=q^2$, $c=q^3$ and $z=q$, we get
\begin{align*}
\sum_{m=0}^{\infty}\frac{(q^2;q^2)_m}{(q^3;q^2)_{m}}q^m=\frac{1}{1-q}\sum_{m=0}^{\infty}\frac{(q;q^2)_m}{(q^3;q^2)_m(q^3;q^2)_{m}}(-1)^mq^{3m}q^{m(m-1)}.
\end{align*}
We substitute the above expression in \eqref{before 2phi1} so as to obtain
\begin{align*}
	\mathcal{S}_{\mathcal{OE}}(q)
&=\frac{1}{(q;q^2)_\infty}\sum_{m=0}^{\infty}\frac{(-1)^mq^{m^2+2m+1}}{(1-q^{2m+1})(q;q^2)_{m+1}}-\frac{1}{(q^2;q^2)_\infty}\left(\sum_{m=1}^{\infty}\frac{q^{m}}{(1-q^{m})^2}-\sum_{m=1}^{\infty}\frac{q^{2m}}{(1-q^{2m})^2}\right).
\end{align*}
Replacing $m$ by $m-1$ in first sum on the right-hand side and invoking the definitions of $\mathcal{U}(q)$ and $E_2(\tau)$, we complete the proof of the theorem.
\end{proof}

\begin{proof}[\textbf{Corollary \textup{\ref{relation bw sptEO and sptOE}}}][]
This is immediate from Theorems \ref{sptEO thm} and \ref{sptOE thm}.
\end{proof}

\begin{proof}[\textbf{Theorem \textup{\ref{sptEOud thm}}}][]
The definition of $\mathcal{S}_{\mathcal{EO}_{ud}}(q)$, given in \eqref{sptEOud}, implies that
\begin{align*}
	\mathcal{S}_{\mathcal{EO}_{ud}}(-q)&=\sum_{n=1}^{\infty}\frac{q^{2n}(q^{2n+1};q^2)_\infty}{(q^2;q^2)_n(1-q^{2n})}\\
	&=(q;q^2)_\infty\sum_{n=1}^{\infty}\frac{q^{2n}}{(q^2;q^2)_n(q;q^2)_n(1-q^{2n})}\\
	&=(q;q^2)_\infty\sum_{n=1}^{\infty}\frac{q^{2n}}{(q;q)_{2n}(1-q^{2n})}.
\end{align*}
Expanding the term $1/(1-q^{2n})$ through geometric series and then interchanging the order of summations, we are led to
\begin{align}\label{before euler}
	\mathcal{S}_{\mathcal{EO}_{ud}}(-q)
	&=(q;q^2)_\infty\sum_{m=0}^{\infty}\sum_{n=1}^{\infty}\frac{q^{2n(m+1)}}{(q;q)_{2n}} \nonumber\\
	&=\frac{(q;q^2)_\infty}{2}\sum_{m=0}^{\infty}\sum_{n=1}^{\infty}\frac{q^{n(m+1)}}{(q;q)_{n}}(1+(-1)^n)  \nonumber\\
	&=\frac{(q;q^2)_\infty}{2}\sum_{m=0}^{\infty}\sum_{n=1}^{\infty}\frac{q^{n(m+1)}}{(q;q)_{n}}+\frac{(q;q^2)_\infty}{2}\sum_{m=0}^{\infty}\sum_{n=1}^{\infty}\frac{(-1)^nq^{n(m+1)}}{(q;q)_{n}}.
\end{align}
Now using the  Euler's identity \cite[(2.2.5)]{andrews}
$\sum_{n=0}^{\infty}\frac{t^n}{(q;q)_n}=\frac{1}{(t)_\infty},$
twice in \eqref{before euler}, we see that
\begin{align}\label{before sum of tails}
	\mathcal{S}_{\mathcal{EO}_{ud}}(-q)&=\frac{(q;q^2)_\infty}{2}\sum_{m=0}^{\infty}\left(\frac{1}{(q^{m+1};q)_\infty}-1\right)+\frac{(q;q^2)_\infty}{2}\sum_{m=0}^{\infty}\left(\frac{1}{(-q^{m+1};q)_\infty}-1\right)\nonumber\\
	&=-\frac{(q;q^2)_\infty}{2(q;q)_\infty}\sum_{m=0}^{\infty}\big((q;q)_\infty-(q;q)_m\big)-\frac{(q;q^2)_\infty}{2(-q;q)_\infty}\sum_{m=0}^{\infty}\big((-q;q)_\infty-(-q;q)_m\big).
\end{align}
Next, we need the famous sum-of-tails identity of Ramanujan \cite[p.~14]{ram}
\begin{align}\label{ramanujan sum of tails}
\sum_{n=0}^{\infty}\big((-q;q)_\infty-(-q;q)_n\big)
&=-(-q;q)_\infty D(q)+\frac{1}{2}\sigma(q),
\end{align}
and the following identity of Zagier \cite[p.~951, Theorem 2]{Zagier}
\begin{align}\label{zagier}
\sum_{n=0}^{\infty}\left((q;q)_\infty-(q;q)_n\right)
=\frac{1}{2}H(q)-(q;q)_\infty D(q).
\end{align}
Equations \eqref{before sum of tails}, \eqref{ramanujan sum of tails} and \eqref{zagier} together yield
\begin{align*}
(-q;q)_\infty 	\mathcal{S}_{\mathcal{EO}_{ud}}(-q)&=-\frac{1}{2(q;q)_\infty}\left(\frac{1}{2}H(q)-(q;q)_\infty D(q)\right)+\frac{1}{2}D(q)-\frac{1}{4(-q;q)_\infty}\sigma(q)\\
&=D(q)-\frac{1}{4(q;q)_\infty}H(q)-\frac{1}{4(-q;q)_\infty}\sigma(q).
\end{align*}
This completes the proof of the theorem.
\end{proof}


\section{Concluding remarks and Further Questions}\label{section: concluding remarks}
In this paper, among other things, we show that the generating functions for partitions separated according to the parity of their parts and of their smallest part analgoues can be expressed in terms of mock modular forms and mock Maass theta functions. We note that the mock modular form $\mathcal{U}(q)$ has several representations, arising both from our work and from the existing literature. For the convenience, we collect below most of these representations in one place:
\begin{align}\label{uq several repres}
\mathcal{U}(q)&=q(q^4;q^4)_\infty\overline{\mathrm{oe}}(q)=(q^2;q^2)_\infty\mathcal{S}_{\mathcal{EO}}(q)=\sum_{n=1}^{\infty}H(8n-1)q^n=\sum_{n=1}^{\infty}\frac{(-1)^{n+1}q^{n^2}}{(1-q^{2n-1})(q;q^2)_{n}}\nonumber\\
&=\frac{(q; q^2)_{\infty}}{(q^2; q^2)_{\infty}} \sum_{n \in \mathbb{Z}} \frac{ q^{2n^2+3n+1}}{\left(1 - q^{2n+1}\right)^2}=\sum_{n=0}^{\infty}\frac{(q^2;q^2)_n}{(q;q^2)_{n+1}}q^{n+1}=\sum_{n=1}^\infty\sum_{m=1}^n\frac{q^{\frac{n(n+1)}{2}-\frac{m(m-1)}{2}}}{1-q^n},
\end{align}
where the last expression can be found in \cite[Equation (4.3)]{cc}. As discussed above in this paper, each of these representations of $\mathcal{U}(q)$ is useful from one perspective or another. For example, Alfes, Bringmann, and Lovejoy \cite{abl} used the fifth expression to show that it is a mock modular form, whereas the third, fourth, sixth, and seventh expressions in \eqref{uq several repres} yield Corollary \ref{U combi} (see Theorems \ref{U in terms of difference of odd Ferrers} and \ref{U in terms of difference of simple}).

Now, it is natural to seek a bijection between the sets containing the partitions in \eqref{1t}. Moreover, our study in this paper naturally leads to the following questions:
\begin{enumerate}
	
	\item It will be interesting to find bijectives proofs for the results in  Corollary \ref{U combi}, Corollary \ref{oesoe}, Corollary \eqref{sigma* results},  Corollary \ref{partition difference1}, and Corollary \ref{partition difference2}.

	\item Bringmann, Craig, Nazaroglu \cite{bcn2} gave an exact formula for the partition function $p_{od}^{eu}(n)$, which counts the number of partitions of n whose even parts are larger than odd parts, and where
	odd parts must be distinct and likewise even parts are unrestricted. Its generating function is related to false indefinite theta functions and Maass forms. Note that our function $\mathcal{K}(q)$ in Theorem \ref{sigma* as generating function} exhibits similar behavior. While we have provided only the main asymptotic term of $k(n)$, it would be interesting to obtain an exact formula for $k(n)$.
	\item Similarly, one may ask whether an exact formula for $\mathcal{L}_e(n)$ can be obtained, for which the main asymptotic term is given in Corollary \ref{Len asymptotic}.
	\item Asymptotic expansions for the partition functions $\overline{\mathcal{OE}}(n)$, $\mathrm{spt}_{\mathcal{EO}}(n)$, $\mathrm{spt}_{\mathcal{OE}}(n)$ and $\mathrm{spt}_{\mathcal{EO}_{ud}}(n)$?
	\item In this paper, we considered smallest-part analogues only for three of the partition functions separated according to the parity of their parts. It would be of worth to investigate whether the smallest-part analogues of other partition functions in this class also have interesting properties.
	\item Do the partition functions considered in this paper satisfy any interesting congruence properties?

\end{enumerate}

\noindent {\bf{Acknowledgements:}}\,
Authors would like to thank Koustav Banerjee for his valuable comments on an earlier version of the paper. They also thank Professor George Andrews and Professor Atul Dixit for helpful discussions. The second author is partially supported by Grant ANRF/ECRG/ 2024/003222/PMS of the Anusandhan National Research Foundation (ANRF), Govt. of India, and  the FIG grants of IIT Roorkee. The first author is supported by UGC fellowship for her PhD. Both the authors sincerely thank these institutions for the support.

\noindent 
\subsection*{AI Declaration}\, This work was done without the use of any artificial intelligence tools. It is solely the work of the authors.


\begin{thebibliography}{00}


\bibitem{aa}
S.~Ahlgren and N.~Andersen, \emph{Algebraic and transcendental formulas for the smallest parts function}, Adv. Math. \textbf{289} (2016) 411--437.

\bibitem{abl}
C.~Alfes, K.~Bringmann and J.~Lovejoy, \emph{Automorphic properties of generating functions for generalized odd rank moments and odd Durfee symbols}, Math. Proc. Camb. Phil. Soc. (2011), \textbf{151}, 385--406.

\bibitem{andrews rlnb}
G.~E.~Andrews, \emph{Ramanujan’s ``lost” notebook. I. Partial $\theta$-functions}, Adv. in Math. \textbf{41} (1981), no. 2, 137--172.


\bibitem{andrews}
G.~E.~Andrews, \emph{The Theory of Partitions}, Cambridge University Press (1998).

\bibitem{andrews invent}
G.~E.~Andrews, \emph{Durfee symbols, and the Atkin Garvan moments of ranks}, Invent. Math. \textbf{169} (1), 37--73 (2007).

\bibitem{andrews spt}
G.~E.~Andrews, \emph{The number of smallest parts in the partitions of $n$}, J. Reine Angew. Math. \textbf{624} (2008), 133--142.


\bibitem{andrews parity1}
G.~E.~Andrews, \emph{Integer partitions with even parts below odd parts and the Mock theta functions}, Ann. Comb. \textbf{22} (2018) 433--445.

\bibitem{andrews parity2}
G.~E.~Andrews, \emph{Partitions with parts separated by parity}, Ann. Comb. \textbf{23} (2019) 241--248.

\bibitem{rlnb-V}
G.~E.~Andrews and B.~C.~Berndt, \emph{Ramanujan's Lost Notebook, Part V}, Springer, New York (2018).

\bibitem{adh}
G.~E.~Andrews, F. ~J.~Dyson and D.~Hickerson, \emph{Partitions and indefinite quadratic forms}, Invent. Math. \textbf{91} (1988), 391--407.

\bibitem{agl}
G.~E.~Andrews, F.~G~Garvan and J.~Liang, \emph{Self-conjugate vector partitions and the parity of the spt-function}, Acta Math. \textbf{158} (3) (2013).


\bibitem{an}
G. ~E.~Andrews and D.~Newman, \emph{The minimal excludant in integer partitions}, J. Integer Seq. \textbf{23} (2020).

\bibitem{bailey}
W.~N.~Bailey, \emph{Generalized hypergeometric series}, (Reprinted: Hafner, New York, (1964), Cambridge Mathematical Tract No. 32, Cambridge University Press, London, 1935.

\bibitem{bbd}
K.~Banerjee, K.~Bringmann and A.~Dixit, \emph{Restricted Overpartitions and concave compositions: their modularity and asymptotics}, preprint, 2026.

\bibitem{bringmann duke}
 K.~Bringmann, \emph{On the explicit construction of higher deformations of partition statistics}, Duke Math. J., \textbf{144} (2):195--233, 2008.

\bibitem{bfor}
K.~Bringmann, A.~Folsom, K.~Ono and L.~Rolen, \emph{Harmonic Maass forms and mock modular forms: theory and applications}, AMS colloquium series \textbf{64}, 2017.

\bibitem{bcn}
K.~Bringmann, W.~Craig and C.~Nazaroglu, \emph{On the asymptotic behavior for partitions separated by parity}, {Electron. J. Combin.} \textbf{32:1} (2025), 1--18.

\bibitem{bcn2}
K.~Bringmann, W.~Craig and C.~Nazaroglu, \emph{Precision asymptotic for partitions featuring false-indefinite theta functions}, preprint, 2025.

\bibitem{bjs}
K.~Bringmann and C.~Jennings-Shaffer, \emph{A note on Andrews’ partitions with parts separated by parity}, Ann. Comb. \textbf{23}, 573--578 (2019).

\bibitem{bjm}
K.~Bringmann, C.~Jennings-Shaffer and K.~Mahlburg, \emph{On a Tauberian theorem of Ingham and Euler--Maclaurin	summation}, Ramanujan J. \textbf{61} (2021), 55--86.

\bibitem{bl}
K.~Bringmann and J.~Lovejoy, \emph{Overpartitions and class numbers of binary quadratic forms}, Proc. Natl. Acad. Sci. USA \textbf{106} (2009), no. 14, 5513--5516.

\bibitem{blo}
K.~Bringmann, J.~Lovejoy and R.~Osburn, \emph{Automorphic Properties of Generating Functions for Generalized Rank Moments and Durfee Symbols}, Int. Math. Res. Not. IMRN (2010), 238--260.

\bibitem{bopr}
J.~Bryson, K.~Ono, S.~Pitman and R.~C.~Rhoades, \emph{Unimodal sequences and quantum and mock modular forms}, Proc. Natl. Acad. Sci. USA \textbf{109} (2012), no. 40, 16063--16067.

\bibitem{cc}
D.~Chen and R.~Chen, \emph{Generating functions of the Hurwitz class numbers associated with certain mock theta functions}, Ramanujan J. \textbf{(64)} (2024), 371--400.

\bibitem{cg}
R.~Chen and F.~G.~Garvan, \emph{A proof of the mod $4$ unimodal sequence conjectures and related mock theta functions}, Adv. Math. \textbf{398} (2022) 108235.

\bibitem{cohen}
H.~Cohen, \emph{$q$-identities for Maass waveforms}, Invent. Math. \textbf{91} (1988), 409--422.

\bibitem{dy}
A.~Dixit and A.~J.~Yee, \emph{Generalized higher order spt-functions}, Ramanujan J. \textbf{31} (1-2) (2013) 191--212.

\bibitem{fine}
N.~J.~Fine, \emph{Basic hypergeometric series and applications}, Mathematical Surveys and Monographs, vol. 27, American Mathematical Society, Providence, RI, 1988.

\bibitem{fk}
A.~Folsom, C.~Ki, Y.~N.~T.~Vu and B.~Yang, \emph{``Strange” combinatorial quantum modular forms}, J. Number Theory \textbf{170} (2017), 315--346.

\bibitem{fo}
A.~Folsom and K.~Ono, \emph{The spt-function of Andrews}, Proc. Natl. Acad. Sci. USA \textbf{105} (51) (2008) 20152--20156.

\bibitem{Fu-Tang}
S.~Fu and D.~Tang, \emph{Partitions with parts separated by parity: conjugation, congruences and the mock theta functions},  Proc. Roy. Soc.
Edinburg Sect. A: Mathematics \textbf{155} (2025), 954–974.

\bibitem{gr}
G.~Gasper and M.~Rahman, \emph{Basic hypergeometric series}, 2nd ed., Encyclopedia of Mathematics and its Applications, vol. 96, Cambridge University Press, Cambridge, 2004.

\bibitem{hr}
G.~Hardy and S.~Ramanujan, \emph{Asymptotic formulae in combinatory analysis}, Proc. Lond. Math. Soc. (3) \textbf{17} (1918), 75--115.

\bibitem{hj}
K.~Hikami and J.~Lovejoy, \emph{Torus knots and quantum modular forms}, Res. Math. Sci. \textbf{2} (2015), 15 pp.

\bibitem{iw}
K.~Iwasawa, \emph{Lectures on $p$-Adic $L$-Functions}, Princeton University Press and University of Tokyo Press, 1972.

\bibitem{lovejoy}
J.~Lovejoy, \emph{Quantum $q$-series identities}, Hardy-Ramanujan J. (Special Commemorative Volume in Honour of Srinivasa Ramanujan) 44, 61--73 (2021).

\bibitem{mc}
R.~J.~McIntosh, \emph{Second order mock theta functions}, Can. Math. Bull. \textbf{50} (2) (2007) 284--290.

\bibitem{o}
K.~Ono, \emph{Congruences for the Andrews spt function}, Proc. Natl. Acad. Sci. USA, \textbf{108}(2):473--476, 2011.

\bibitem{ram}
S.~Ramanujan, \emph{The Lost Notebook and Other Unpublished Papers}, Narosa, New Delhi (1988).

\bibitem{titch}
E.~C.~Titchmarsh, \emph{The Theory of the Riemann Zeta Function}, Clarendon Press, Oxford, 1986.

\bibitem{wash}
 L.~C.~Washington, \emph{Introduction to cyclotomic fields}, second edition, GTM 83, Springer-Verlag, New York, 1997.

\bibitem{watson}
G.~N.~Watson, \emph{The final problem: an account of the mock theta functions}, J. London Math. Soc. \textbf{11} (1), 55--80 (1936).

\bibitem{Zagier}
D.~Zagier, \emph{Vassiliev invariants and a strange identity related to
the Dedekind eta-function}, Topology \textbf{40} (2001) 945--960.

\bibitem{Zagier quantum}
D.~Zagier, \emph{Quantum Modular Forms}, Quanta of maths, 659--675, Clay Math. Proc., 11, Amer. Math. Soc., Providence, RI, 2010.

\bibitem{Zwegers}
S.~Zwegers, \emph{Mock Maass theta functions}, Q. J. Math. \textbf{63} (2012), 753--770.

%

%


%
%
%
%
%
%





\end{thebibliography}
\end{document}